\newtheorem{theorem}{Theorem}[section]              % Base counter
\newtheorem{proposition}[theorem]{Proposition}      % Shares counter with theorem
\newtheorem{lemma}[theorem]{Lemma}                  % Shares counter with theorem
\newtheorem{corollary}[theorem]{Corollary}          % Shares counter with theorem
\newtheorem{definition}[theorem]{Definition}        % Shares counter with theorem
\newtheorem{example}[theorem]{Example}              % Shares counter with theorem
\newtheorem{remark}[theorem]{Remark}                % Shares counter with theorem
\renewcommand{\hat}{\widehat}     % Make \hat produce wide hat by default
\providecommand{\tightlist}{%
  \setlength{\itemsep}{0pt}\setlength{\parskip}{0pt}}
\title{Asymptotic-Möbius maps}
\author[G. Grützner]{Georg Grützner}
\address{Department of Mathematics, University of Luxembourg, L-4364 Esch-sur-Alzette}
\email{georg.gruetzner@uni.lu}
\subjclass[2020]{Primary 30L10; Secondary 20F65, 51F30}
\keywords{asymptotic-Möbius maps, large-scale conformality, large-scale dimension, asymptotic cones, coarse cross-ratios, coarse geometry}
\begin{document}
\begin{abstract}
We introduce asymptotic-Möbius (AM) maps, a large-scale analogue of quasi-Möbius maps tailored to geometric group theory.
AM-maps capture coarse cross-ratio behavior for configurations of points that lie far apart, providing a notion of "conformality at infinity" that is stable under quasi-isometries, compatible with scaling limits, and rigid enough to yield structural consequences absent from Pansu's notion of large-scale conformality.
We establish basic properties of AM-maps, give several sources of examples, including quasi-isometries, sublinear bi-Lipschitz equivalences, snowflaking, and Assouad embeddings, and apply the theory to large-scale dimension and metric cotype.
As applications we obtain dimension-monotonicity results for nilpotent groups and CAT(0) spaces, and new obstructions to the existence of AM-maps arising from metric cotype.
\end{abstract}
\maketitle

\section{Introduction}
\label{sec:introduction}

Classical conformal geometry has no established ``rough'' analogue:
while quasiconformal, quasisymmetric, and quasi-Möbius mappings are well
understood in the infinitesimal or small-scale setting, there is no
coherent or generally accepted notion of ``conformality at large
scales'' within geometric group theory.

This paper aims to contribute to the development of such a framework by
identifying a class of mappings that behaves well under quasi-isometries
and whose scaling limits reflect classical quasisymmetric (QS), and
quasi-Möbius (QM) behavior on asymptotic cones.

In \cite{Pansu2021} Pierre Pansu introduces a notion of large-scale
conformal maps that mimics the infinitesimal behavior of conformal maps.
In short, large-scale conformal maps map families of disjoint balls onto
families of weakly overlapping quasiballs. It is a very flexible notion
that includes, for example, coarse embeddings. However, this flexibility
makes the asymptotic behavior of such maps less predictable.

This motivates the central construction of the present paper: the
introduction of \emph{asymptotic-Möbius (AM)} maps. Conceptually,
AM-maps are designed to mimic the defining feature of quasi-Möbius
maps---\emph{preservation of cross-ratios}---but only for configurations
of points that lie far apart. This gives a large-scale analogue of
quasi-Möbius behavior that is: stable under quasi-isometries and small
changes in topology, sensitive to the asymptotic geometry of the spaces
involved, and sufficiently rigid to derive structural consequences
absent from Pansu's framework.

We postpone the precise definition of asymptotic-Möbius (\(AM\)) maps
until later. Sources of examples of \(AM\)-maps are:

\begin{itemize}
\tightlist
\item
  Quasi-isometric embeddings,
\item
  Sublinear-bi-Lipschitz equivalences (i.e., maps inducing Lipschitz
  equivalences on asymptotic cones \cite{Cornulier2019},
\item
  Snowflaking (i.e., replacing a metric by a power of it),
\item
  Assouad maps from doubling metric spaces to \(\mathbb{R}^N\).
\end{itemize}

By Assouad's theorem \cite{Assouad1983}, every doubling metric
space---including every nilpotent Lie group and every finitely generated
group of polynomial growth---admits an AM-embedding into a sufficiently
high-dimensional Euclidean space. Moreover, we will encounter infinite
dimensional examples.

We apply the AM-framework to large-scale obstruction theory. Such
questions parallel classical quasiconformal obstruction theory but
operate at the level of large-scale cross-ratio geometry.

The aim of this paper is to develop the foundational theory of
asymptotic-Möbius maps, analyze their behavior under scaling limits, and
initiate a systematic approach to large-scale quasi-Möbius geometry
within geometric group theory.

Further related work on large-scale mappings has been carried out by
Cornulier \cite{Cornulier2011}, \cite{Cornulier2019} and by Pallier
\cite{Pallier2020}.

\subsection{Results}
\label{sec:results}

After introducing and developing the AM-framework in Section
\ref{sec:ammaps} and Section \ref{sec:examples}, we apply the
AM-framework to dimension theory and metric cotype.

In Section \ref{sec:applications_to_dimension_theory}, we show that
under AM-mappings a large-scale term of dimension increases. The
relevant term depends on the class of the groups considered.

\begin{theorem}
\label{0c2f8e}
Let \((\Gamma,|\cdot|)\) and \((\Gamma',|\cdot|)\) be finitely generated
nilpotent groups equipped with some word norm and
\(f: \Gamma \rightarrow \Gamma'\) a regular AM-map, then
\(\operatorname{asdim}(\Gamma) \leq \operatorname{asdim}(\Gamma')\). If
\(f\) is an AM-map between simply connected nilpotent Lie groups
\((G,d_{g})\) and \((G',d_{g'})\) equipped with left-invariant
Riemannian metrics, then
\(\operatorname{dim}(G) \leq \operatorname{dim}(G')\). Futhermore, if
\(\operatorname{asdim}(\Gamma) = \operatorname{asdim}(\Gamma')\), then
the asymptotic cones of \(\Gamma\) and \(\Gamma'\) are isomorphic graded
Lie groups. Conversely, given nilpotent groups with isomorphic
asymptotic cones, there exists an asymptotic-Möbius map between them.
\end{theorem}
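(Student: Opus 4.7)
The plan is to reduce each assertion to the structure of the asymptotic cones, identify these cones via Pansu's theorem, and then appeal to rigidity of quasi-Möbius maps between Carnot groups.

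\textbf{Step 1: scaling limits.} I would first invoke the compatibility of AM-maps with ultralimits developed in Section \ref{sec:ammaps} to produce an induced quasi-Möbius map $\hat{f}\colon\operatorname{Cone}(\Gamma)\to\operatorname{Cone}(\Gamma')$. The regularity hypothesis---bounded multiplicity of fibers---should prevent the ultralimit from collapsing well-separated points, so that $\hat{f}$ is a topological embedding. In the Lie group case, the construction is identical with cones taken along any sequence of scales.

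\textbf{Step 2: identifying the cones.} By Pansu's theorem, $\operatorname{Cone}(\Gamma)$ is a Carnot group whose topological dimension equals the Hirsch length of $\Gamma$, and Gromov's identification of $\operatorname{asdim}$ with the Hirsch length yields $\dim_{\mathrm{top}}(\operatorname{Cone}(\Gamma))=\operatorname{asdim}(\Gamma)$. Similarly, $\operatorname{Cone}(G)$ is the associated graded Carnot group, of the same topological dimension as $G$. Monotonicity of $\dim_{\mathrm{top}}$ on the cone side will then produce both asserted inequalities.

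\textbf{Step 3: monotonicity.} Here I would use that a QM-embedding is quasisymmetric on bounded subsets and hence is a topological embedding, and therefore cannot decrease topological dimension. Applied to $\hat{f}$ this yields $\operatorname{asdim}(\Gamma)\leq\operatorname{asdim}(\Gamma')$ and $\dim(G)\leq\dim(G')$.

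\textbf{Step 4: the equality case.} When the dimensions coincide, $\hat{f}$ becomes a QM-embedding between Carnot groups of equal topological dimension. Combining invariance of domain with properness (inherited from the regularity of $f$), I would conclude that $\hat{f}$ is a QM-homeomorphism. Pansu's differentiation theorem then provides a.e.-defined Pansu derivatives that are nontrivial graded morphisms; being induced by a QM bijection between Carnot groups of equal dimension they must be graded isomorphisms, forcing $\operatorname{Cone}(\Gamma)\cong\operatorname{Cone}(\Gamma')$ as graded Lie groups.

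\textbf{Step 5: converse.} Given a graded Lie group isomorphism $\phi\colon\operatorname{Cone}(\Gamma)\to\operatorname{Cone}(\Gamma')$, I would pick Mal'cev coordinates on $\Gamma$ and $\Gamma'$ and define $f$ by bounded rounding of $\phi$ to the target Mal'cev lattice. The Baker--Campbell--Hausdorff formula combined with the gradation of the cones should make the commutator discrepancy sublinear in the word length, producing a sublinear bi-Lipschitz equivalence and hence, by the examples collected in Section \ref{sec:examples}, an AM-map.

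The main obstacle I anticipate is Step 4: the QM hypothesis on $\hat{f}$ is considerably weaker than quasiconformality, and upgrading a.e.-defined Pansu derivatives to a global graded isomorphism requires genuine use of cross-ratio control together with the gradation. Extracting graded (rather than merely bi-Lipschitz) information from the cross-ratio constraint is where most of the technical work will concentrate.
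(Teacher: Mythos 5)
Your proposal follows essentially the same route as the paper: induce a quasisymmetric map on asymptotic cones via the AM theorem, identify the cones as Carnot groups via Pansu and equate $\operatorname{asdim}$ with Hirsch length, obtain dimension monotonicity from injectivity together with exhaustion by compact balls, use invariance of domain plus Pansu differentiability in the equality case, and reduce the converse to the existence of an SBE (which is an AM-map). The only real differences are that the paper cites Cornulier for the converse instead of constructing the SBE by hand in Mal'cev coordinates, and the ``obstacle'' you anticipate in Step 4 is a non-issue, since Pansu's differentiation theorem already yields that the a.e.\ derivative of a quasisymmetric homeomorphism between Carnot groups is a graded group isomorphism, so a single point of differentiability identifies the cones (note also that ``regular'' in this paper means continuity and a properness condition at the distinguished point, not bounded multiplicity of fibers).
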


In the world of \(\operatorname{CAT}(0)\)-spaces, the analogous theorem
takes the following form.

\begin{theorem}
\label{2f4976}
Let \(X\) and \(Y\) be extended metric spaces with points at infinity
\(*_{X}\) and \(*_{Y}\) respectively. Let \(X\) be asymptotically
chained, and assume that all asymptotic cones of \(X\) and \(Y\) are
\(\operatorname{CAT}(0)\)-spaces. If there exists a regular AM-map
\(f: (X,o, *_{X}) \rightarrow (Y,o',*_{Y})\), then the telescopic
dimension increases e.g.
\[\operatorname{tele-dim}(X) \leq \operatorname{tele-dim}(Y).\]
\end{theorem}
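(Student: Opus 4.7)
The plan is to pass the AM-condition to asymptotic cones, read it there as a pointed quasi-Möbius map, and invoke a monotonicity-of-dimension result for quasisymmetric embeddings between CAT(0) spaces. First I would use the scaling-limit compatibility of regular AM-maps developed earlier in the paper: for any non-principal ultrafilter $\omega$ and admissible scaling sequence $(\lambda_n)$, the map $f$ induces an AM-map on the cones
\[
f_{\omega}\colon (C_{\omega}X, o_{\omega}, *_{X,\omega}) \longrightarrow (C_{\omega}Y, o_{\omega}', *_{Y,\omega}'),
\]
sending distinguished point at infinity to distinguished point at infinity. Because the additive coarse errors in the AM-inequalities are swallowed by the rescaling, $f_{\omega}$ is genuinely quasi-Möbius in the classical sense. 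The hypothesis that $X$ is asymptotically chained enters here to guarantee that $f_{\omega}$ is defined on all of $C_{\omega}X$ (no ``holes'' are produced in the ultralimit) and that the cross-ratio inequality can be evaluated without running into degenerate configurations.

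Once $f_{\omega}$ is a quasi-Möbius map fixing $*_{X,\omega}$, the standard trick of normalizing at the point at infinity turns it into a quasisymmetric embedding
\[
f_{\omega}\colon C_{\omega}X \setminus \{*_{X,\omega}\} \hookrightarrow C_{\omega}Y \setminus \{*_{Y,\omega}'\}.
\]
In particular $f_{\omega}$ is a topological embedding. Source and target are \(\operatorname{CAT}(0)\) by hypothesis, so I would invoke Kleiner's theorem, which identifies the geometric (hence telescopic-relevant) dimension of a \(\operatorname{CAT}(0)\) space with its topological covering dimension, and combine it with monotonicity of topological dimension under embeddings to get
\[
\dim_{\mathrm{top}}(C_{\omega}X) \leq \dim_{\mathrm{top}}(C_{\omega}Y) \leq \operatorname{tele-dim}(Y).
\]
Taking the supremum over $\omega$ and $(\lambda_n)$ on the left-hand side then yields $\operatorname{tele-dim}(X) \leq \operatorname{tele-dim}(Y)$.

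The step I expect to be the main obstacle is the first one: verifying that a regular AM-map really descends to an honest \emph{map} (and not merely a relation or a partial map) between cones which is both injective and classically quasi-Möbius. This is where the two technical hypotheses must be combined: regularity provides uniform control on how distant tuples are transported, while asymptotic chaining produces, at every point of $C_{\omega}X$, a witnessing sequence in $X$ whose $f$-image stays at a comparable scale and therefore survives the ultralimit without collapse. After that, turning a quasi-Möbius map fixing one point into a quasisymmetric embedding and then applying Kleiner's theorem are essentially black-box steps, and the remaining bookkeeping is routine.
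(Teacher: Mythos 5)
Your overall route---induce a map on asymptotic cones, observe it is quasisymmetric and hence a topological embedding, then compare dimensions---is the same as the paper's, but there is a genuine gap in your dimension-comparison step. You invoke ``Kleiner's theorem'' to identify the geometric dimension of a $\operatorname{CAT}(0)$ space with its topological covering dimension, and you need this on the \emph{target} cone to pass from $\dim_{\mathrm{top}}(C_{\omega}Y)$ to $\operatorname{geom-dim}(C_{\omega}Y)\leq\operatorname{tele-dim}(Y)$. But the equality $\operatorname{geom-dim}=\dim$ is only known for \emph{separable} $\operatorname{CAT}(k)$ spaces (Remark \ref{2fc8b6}); in general only $\operatorname{geom-dim}\leq\dim$ holds (Remark \ref{832771}), which is the wrong direction for your use, and asymptotic cones are typically non-separable (e.g.\ cones of hyperbolic spaces are non-separable $\mathbb{R}$-trees). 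The paper avoids this: it exhausts the source cone by closed balls $\bar B_n$, uses that these are compact, so that $g(\bar B_n)$ is a compact subset of the target cone and hence $\dim g(\bar B_n)\leq\operatorname{geom-dim}\big(\operatorname{Cone}^{\omega}_{\lambda_n'}(Y,o')\big)$ by the very definition of geometric dimension; the dimension-lowering mapping theorem (Remark \ref{4b8034}) and the countable sum theorem (Remark \ref{879747}) then give $\dim\operatorname{Cone}^{\omega}_{\lambda_n}(X,o)\leq\operatorname{geom-dim}\operatorname{Cone}^{\omega}_{\lambda_n'}(Y,o')$, and the inequality $\operatorname{geom-dim}\leq\dim$ is used only on the source side. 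To close your argument you would need either separability of the target cones (not assumed) or a compact-exhaustion argument of this kind.

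A secondary imprecision concerns your first step and the final supremum. You assert that $f$ induces a quasi-Möbius map on cones for \emph{every} ultrafilter and \emph{every} scaling sequence $(\lambda_n)$, and you attribute the role of asymptotic chaining to avoiding ``holes''. In the paper, Theorem \ref{85f2fe} only produces an induced map for scalings of the form $\lambda_n=d(q,z_n)$ with $z_n\to *_X$ (and target scalings $\lambda_n'=d(q',f(z_n))$ determined by $f$). Chaining is used twice: in Lemma \ref{6199f1}, to make the induced map well defined on the cone (two sequences representing the same point need not be $u$-separated, and a bridging sequence is needed); and in Lemma \ref{0c35a0}/Remark \ref{977137}, to show that every asymptotic cone of $X$ is isometric to one with a scaling of the special form, which is exactly what licenses the supremum over all cones of $X$ in the definition of $\operatorname{tele-dim}(X)$. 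As stated, your supremum step rests on the unproved claim that the induced map exists for arbitrary scaling sequences; it can be repaired along the paper's lines, but the mechanism is not the one you describe.
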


In Section \ref{sec:on_metric_cotype_obstructions} we apply
Mendel--Naor's metric cotype theory to derive additional obstructions to
the existence of AM-maps.

\begin{theorem}
\label{7f9774}
Suppose that \(p,q \in [2,\infty]\) satisfy \(p > q\). Then there does
not exist a regular AM-map from \(\ell_{p}\) to a metric space with
sharp metric cotype \(q\).
\end{theorem}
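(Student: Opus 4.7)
The plan is to assume for contradiction that such a regular AM-map $f \colon \ell_p \to M$ exists, and to derive that $\ell_p$ itself satisfies a metric cotype-$q$ inequality, contradicting the sharpness of cotype $p$ for $\ell_p$. The bridge between the AM-hypothesis and Mendel--Naor's cotype framework is the passage from coarse cross-ratio control to quantitative quasi-symmetric-type distance control on finite configurations lying at large scale.

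The first step is to extract, from the AM-hypothesis, quantitative distance-ratio estimates in $M$. For each $n$, let $m \asymp n^{1/q}$ be the integer provided by Mendel--Naor's cotype inequality for $M$. Embed the discrete cube $\mathbb{Z}_m^n$ into $\ell_p$ by $x \mapsto t \sum_{j=1}^n x_j e_j$ at a scale $t$ to be chosen, and precompose with $f$ to obtain $F_t \colon \mathbb{Z}_m^n \to M$. If $t$ is large enough that every nonzero pairwise distance in the embedded cube exceeds the AM-threshold, then the AM-inequality, applied to quadruples drawn from this configuration, controls the ratios $d(F_t(y), F_t(z))/d(F_t(y'), F_t(z'))$ in $M$ by the corresponding $\ell_p$-distance ratios up to bounded multiplicative error, and regularity of $f$ upgrades this to a two-sided comparison of individual distances.

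The second step feeds this into Mendel--Naor's cotype-$q$ inequality for $F_t$: the $q$-th moment of the diagonal jumps $d(F_t(x + \frac{m}{2} e_j), F_t(x))$ is bounded by $m^q$ times the $q$-th moment of the edge jumps $d(F_t(x + \varepsilon), F_t(x))$. Converting these distances back to $\ell_p$-distances via the bounds from the first step, and letting $t \to \infty$ to absorb the multiplicative and additive errors (an option because the AM-constants are scale invariant and regularity is preserved), produces a metric cotype-$q$ inequality for the identity on the rescaled cube in $\ell_p$. Since this holds uniformly in $n$ with $m \asymp n^{1/q}$, it yields metric cotype $q$ for $\ell_p$, contradicting the sharpness of cotype $p > q$.

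The main obstacle is the calibration of the scale $t$ together with the conversion from scale-invariant cross-ratio data to additive $L^q$-control on individual distances. The AM-inequality must be applied to every quadruple arising in the Mendel--Naor averaging over $\mathbb{Z}_m^n \times \{-1,0,1\}^n$ without the errors accumulating with $n$; this is where regularity plays its role, analogous to the Tukia--Väisälä rigidification of quasi-Möbius to quasi-symmetric in the classical setting. A secondary obstacle is verifying that the version of Mendel--Naor transfer extracted applies at finite scale $t$ rather than only in the asymptotic cone, but this follows because the whole argument runs for fixed $n$ at a single well-chosen scale $t = t(n)$ and then takes $n \to \infty$.
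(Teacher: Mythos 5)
Your plan breaks at its central step: the claim that the AM-inequality on separated quadruples, ``plus regularity,'' yields a two-sided comparison of individual distances $d(F_t(y),F_t(z))$ with $\lVert y-z\rVert_p$. Regularity in this paper is only a topological condition at the point at infinity (continuity at $*_X$ and the preimage condition on neighborhoods of $*_Y$); it gives no metric control whatsoever. AM-maps are allowed to distort distances unboundedly --- snowflakes $d\mapsto d^{\alpha}$, Assouad embeddings, and arbitrary quasisymmetric maps are all AM-maps --- so there is no constant, even depending on $n$ and $t$, comparing image distances to source distances on your embedded cube. Cross-ratio (or even quasisymmetric) control is scale-free information about ratios, and the Mendel--Naor inequality is an inequality about actual distances; the ``conversion'' of the cotype-$q$ inequality for $F_t$ in $M$ back into a cotype-$q$ inequality for the cube in $\ell_p$ therefore has no basis. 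The snowflake example already shows the naive transfer fails: the identity from $(\ell_p,d)$ to $(\ell_p,d^{1/2})$ satisfies your hypotheses on every far-away cube, but the cotype inequality for the target does not translate into the same-exponent inequality for the source. Letting $t\to\infty$ does not rescue this, because there are no additive errors being absorbed --- the missing ingredient is multiplicative distance control, which never appears. Making ``$t\to\infty$'' rigorous is exactly passing to an ultralimit, which is the step you tried to avoid.

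The paper's proof takes the route you sidestepped: by the AM-Theorem (Theorem \ref{85f2fe}) a regular AM-map induces a genuine quasisymmetric embedding $g\colon L_p(\mu)\to Z$, where $L_p(\mu)$ is an asymptotic cone of $\ell_p$ (Example \ref{1554d9}) and $Z$ is an asymptotic cone of the target; sharp metric cotype $q$ passes to $Z$ because the cotype inequality involves only finite averages (Remark \ref{15602f}); and then Naor's theorem that (sharp metric) cotype does not decrease under quasisymmetric embeddings gives $p=q_{L_p(\mu)}\le q$, a contradiction. Note that the content of your ``conversion'' step is essentially Naor's theorem itself, which is a nontrivial argument exploiting the scale-invariance of the quasisymmetric modulus and the sharpness bound $m\lesssim n^{1/q}$ --- not a bi-Lipschitz bookkeeping exercise. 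If you want a finite-scale proof, you would have to reproduce that argument with the AM-separation thresholds tracked throughout; as written, the proposal asserts the conclusion of that argument rather than proving it.
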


The paper is organized as follows. In Section \ref{sec:ammaps} we
introduce and develop the general AM-framework. In Section
\ref{sec:examples} we illustrate the theory: first, we show that every
sublinear-Lipschitz equivalence is an AM-map (Proposition \ref{7b5b3e});
then we construct an Assouad-type map between infinite-dimensional
spaces as a further example of an AM-map (Proposition \ref{49a80e}). In
Section \ref{sec:applications_to_dimension_theory} we apply the
AM-framework to dimension theory and prove Theorem \ref{0c2f8e} and
Theorem \ref{2f4976}. In Section \ref{sec:on_metric_cotype_obstructions}
we apply Mendel--Naor's metric cotype to the AM-framework and prove
Theorem \ref{7f9774}.

I would like to thank Pierre Pansu for introducing me to this topic and
for pointing me toward several of its applications.

\section{AM-maps}
\label{sec:ammaps}

\subsection{Extended metrics}
\label{sec:extended_metrics}

The metric distortion of conformal mappings can be substantial. For
example, the stereographic projection exhibits a rapidly increasing
metric distortion (Lipschitz constant) near the pole \(o\). Nonetheless,
its conformal behavior remains well-behaved all the way up to \(o\).

To incorporate the pole \(o\) in our topological model while still
allowing for infinite metric distortion, we extend the notion of a
metric.

\begin{definition}
Given a set \(X\), an \emph{extended metric} on \(X\) is a function
\(d: X \times X \rightarrow \mathbb{R} \cup \{\infty\}\) with the
following properties:

\begin{enumerate}
\def\labelenumi{\arabic{enumi}.}
\tightlist
\item
  There exists at most one point \(*_{X} \in X\) such that:
\end{enumerate}

\begin{itemize}
\tightlist
\item
  For all \(x \in X \backslash\{*_{X}\}\),
  \(d_{*_{X}}(x, *_{X}) = \infty\)
\item
  \(d(*_{X}, *_{X}) = 0\)
\end{itemize}

\begin{enumerate}
\def\labelenumi{\arabic{enumi}.}
\setcounter{enumi}{1}
\tightlist
\item
  The restriction of \(d\) to
  \((X \backslash\{*_{X}\}) \times (X \backslash\{*_{X}\})\) is a
  metric.
\end{enumerate}

The point \(*_{X}\) is referred to as the \emph{point at infinity} of
\((X,d)\) or simply of \(d\) and points in \(X_{*}\) are referred to as
\emph{finite points}.
\end{definition}

For better readability, we denote \(X \backslash\{*_{X}\}\) by
\(X_{*_{X}}\). We also sometimes write \(d_{*_{X}}\) for \(d\) in
analogy of the metric Cayley transform of \(d\) at a finite point \(p\).

\begin{remark}
\label{7ac1d2}
An extended metric takes values in the projectively extended real line
in which division by zero is allowed. In particular, \(1 / 0=\infty\)
and \(1 / \infty=0\). There is however no distinction between
\(+\infty\) and \(-\infty\).
\end{remark}

\begin{remark}[extended metric topology{}]
\label{f3fa12}
The \emph{extended metric topology} on an extended metric space \(X\) is
the topology associated with the base consisting of

\begin{itemize}
\tightlist
\item
  all open ball \(B(x,r)\subset X_{*_{X}}\), and
\item
  sets of the form
  \[U(x,r) \;=\;\{*_{X}\}\ \cup\ \Big(X\setminus \overline{B(x,r)}\Big)\]
  for any \(x \in X_{*_{X}}\) and any \(r>0\), if \(X\) admits a point
  at infinity.
\end{itemize}
\end{remark}

\begin{remark}
\label{8eb339}
The extended metric topology agrees with Buyalo's semimetric topology
\(\mathcal{T}_{\rho}\) \cite{Buyalo2016}, when \(\rho\) is an extended
metric.
\end{remark}

\begin{remark}
\label{aa3217}
If \((X_{*},d)\) is a proper metric space, then \(X\) equipped with the
extended metric topology is exactly the Alexandroff extension of
\(X_{*}\). In particular, \((X,d)\) is compact in this case.
\end{remark}

\begin{remark}
\label{fea4a2}
The extended metric topology has the advantage of being metrizable even
if \((X_{*},d)\) is not locally compact. If \((X_{*},d)\) is unbounded
(not necessarily locally compact), then the extended metric topology on
\(X\) is induced by the inverse metric Cayley transform of \(d\) (see
Definition \ref{226dfd} and \cite[Lemma~2.2]{Bonk2002}).
\end{remark}

\begin{remark}
\label{e6e25e}
In light of Remark \ref{fea4a2}, if an extended metric space \((X,d)\)
admits a point at infinity \(*_{X}\), then we always assume
\((X_{*_{X}},d)\) to be unbounded.
\end{remark}

\subsubsection{Quasimetrics}
\label{sec:quasimetrics}

``Inverting a metric'' at a point is an archetypal operation that is
conformal but exhibits unbounded distortion. It turns out that the
triangle inequality is slightly too strong a requirement to be preserved
under such a metric inversion.

\begin{definition}
A \emph{\(K\)-quasimetric} is a symmetric function
\(d: X \times X \to \mathbb{R} \cup \{ \infty \}\) such that there
exists \(K > 0\) and an extended metric \(\rho\) on \(X\), s.t.
\[\frac{1}{K} \rho(x,y) \leq d(x,y) \leq K \rho(x,y).\] A
\emph{quasimetric space} is a set \(X\) together with a quasimetric
\(d\).
\end{definition}

\begin{remark}
\label{56ca12}
A quasimetric \(d\) satisfies the triangle inequality just enough, such
that open balls defined via \(d\) form a basis of a topology. The
extended metric topology on \(X\) coming from \(d\) is the same as the
extended metric topology on \(X\) coming from \(\rho\)
\cite[Lemma~13.3]{Munkres2000}.
\end{remark}

\begin{definition}
Let \((X,d)\) be an quasimetric space. Let \(x, y, z, w\) be points in
\(X\) with \(x \neq w\) and \(y \neq z\). If all points are finite,
their \emph{cross ratio} \([x, y, z, w]_{d}\) is defined by
\[[x, y, z, w]_{d}=\frac{d(x,z)d(y,w)}{d(x,w)d(y,z)}.\] If \(x=z\) or
\(y=w\), we set \([x, y, z, w]_{d}=0\) even if some of these points are
the point at infinity \(*_{X}\). In other cases we omit the factors
containing \(*_{X}\). For example,
\[[x, y, z, *_{X}]_{d}=\frac{d(x,z)}{d(y,z)}.\]
\end{definition}

The notion of a cross ratio originated in projective geometry of the
19th century and appeared in the work of Möbius, Laguerre, and later in
conformal geometry. Its extension to general metric spaces was
systematically developed in the theory of quasiconformal and
quasiregular maps in the papers of Väisälä \cite{Vaisala1984}.

\begin{definition}
Let \(X\), \(Y\) be quasimetric spaces. Suppose that \(A \subset X\) and
that \(f: A \rightarrow Y\) is an embedding. We say that \(f\) is
\emph{Möbius} if \[[f(x),f(y),f(z),f(w)] = [x,y,z,w]\] whenever
\([x,y,z,w]\) is a cross ratio of points in \(A\). Two quasimetric
spaces are Möbius equivalent if there exists a Möbius homeomorphism
\(f: X \to Y\), \(f\) is then called a \emph{Möbius equivalence}. Two
quasimetrics \(d\) and \(d'\) on \(X\) are Möbius equivalent, if the
identity map \(id: (X,d) \to (X,d')\) is a Möbius equivalence.
\end{definition}

\begin{remark}
\label{589a02}
Two Möbius equivalent quasimetrics induce the same extended metric
topology, see \cite[Proposition~4.4]{Buyalo2016} and Remark
\ref{8eb339}.
\end{remark}

\begin{definition}
\label{7c701f}
Let \((X,d)\) be a quasimetric space. The \emph{Cayley transform of
\((X,d)\) at a finite point \(p \in X\)} is the space \((X,d_{p})\),
where \(d_{p}\) is defined by
\[d_p(x,y) = \frac{d(x,y)}{d(x,p)\,d(p,y)} \quad \text{if} \; x,y \in X \setminus \{p\},\]
\(d_p(x,p) = \infty\) for all \(x \in X \setminus \{p\}\) and
\(d_p(p,p) = 0\). If \((X,d)\) already has a point at infinity
\(*_{X}\), then the convention is that
\(d_p(x,*_{X}) = \frac{1}{d(x,p)}\).

Formally the \emph{Cayley transform of \((X,d)\) at a finite point
\(p \in X\)} is the identity map
\(\mathcal{C}_{p}:(X,d) \to (X,d_{p})\).
\end{definition}

\begin{remark}
\label{47aba5}
The Cayley transform preserves the metric cross ratio and is thus a
homeomorphism as of Remark \ref{589a02}. In other words, the Cayley
transform is a Möbius equivalence.
\end{remark}

\begin{example}
\label{ff036e}
In \(\mathbb{R}^n\) with the Euclidean metric, the Cayley transform at
\(p=0\) produces \[d_0(x,y) = \frac{\|x-y\|}{\|x\|\,\|y\|},\] the
familiar chordal metric on \(S^{n}\) minus one point, up to Möbius
equivalence.
\end{example}

\begin{definition}
\label{226dfd}
Let \((X,d)\) be a quasimetric space with a point at infinity \(*_{X}\).
The inverse Cayley transform of \((X,d)\) at a finite point \(q \in X\)
is the space \((X,d^{q})\), where \(d^{q}\) is defined by
\[d^{q}(x,y) = \frac{d(x,y)}{(1 + d(x,q))(1 + d(y,q))} \quad \text{if} \; x,y \in X \setminus \{*_{X}\},\]
\(d^{q}(x,*_{X}) = d^{q}(*_{X},x) = \frac{1}{1 + d(x,q)}\) for all
\(x \in X \setminus \{*_{X}\}\), and \(d^{q}(*_{X},*_{X}) = 0\).

Formally the \emph{inverse Cayley transform of \((X,d)\) at a finite
point \(q \in X\)} is the identity map
\(\mathcal{C}^{p}:(X,d) \to (X,d^{q})\).
\end{definition}

\begin{remark}
\label{034f19}
The metric Cayley transform and the inverse metric Cayley transform map
quasimetrics to quasimetrics \cite{Bonk2002}. This is generally false
for extended metrics and the main reason for introducing the class of
quasimetrics in the first place.
\end{remark}

The following remark shows, however, that one could dispense with the
introduction of quasimetrics at the cost of systematically applying a
smoothing procedure. We have chosen not to pursue this approach in the
present paper.

\begin{remark}
\label{b38b60}
There is a standard procedure to pass from a quasimetric \(d\) to a
bi-Lipschitz equivalent extended metric \(\hat{d}\) by defining
\[\hat{d}(x, y):=\inf \sum_{i=0}^{k-1} d\left(x_i, x_{i+1}\right),\]
where the infimum is taken over all finite sequences of points
\(x_0, \ldots, x_k \in X\) with \(x_0=x\) and \(x_k=y\)
\cite[Lemma~2.2]{Bonk2002}.
\end{remark}

\subsection{Quasi-Möbius maps}
\label{sec:quasimobius_maps}

Quasi-Möbius maps were introduced by Jussi Väisälä, among others, as a
means of studying quasisymmetric maps and quasiconformal maps. Unlike
quasisymmetric maps, quasi-Möbius maps do not have a point fixed at
infinity.

\begin{definition}
Let \(X\), \(Y\) be quasimetric spaces. Suppose that \(A \subset X\) and
that \(f: A \rightarrow Y\) is an embedding. We say that \(f\) is
\emph{quasi-Möbius} or \(\mathrm{QM}\) if there is a homeomorphism
\(\eta:[0, \infty) \rightarrow[0, \infty)\) such that
\[[f(x),f(y),f(z),f(w)] \leq \eta([x,y,z,w])\] whenever \([x,y,z,w]\) is
a cross ratio of points in \(A\).
\end{definition}

Examples of quasi-Möbius maps are:

\begin{itemize}
\tightlist
\item
  the stereographic projection
  \(S^n \setminus \{o\}\rightarrow \mathbb{R}^n\),
\item
  the Cayley transformations (the complex, quaternionic and octonionic
  analogues of the stereographic projection) \cite{Astengo2004},
\item
  the inversions \(x \mapsto \frac{x}{|x|^2}\) in Banach spaces
  \cite{Vaisala1984}.
\end{itemize}

\begin{remark}
\label{59a2c8}
Quasi-Möbius maps are an intermediate class between quasisymmetric and
quasiconformal maps. Every quasisymmetric map is quasi-Möbius, every
quasi-Möbius map is quasiconformal. The converse is true in special
cases. A quasi-Möbius map between bounded spaces is quasisymmetric. A
quasi-Möbius map between unbounded spaces is quasisymmetric if and only
if it is proper. For details see \cite{Vaisala1984}.
\end{remark}

\subsection{Asymptotic Möbius maps}
\label{sec:asymptotic_mobius_maps}

In the following we will work in the category of doubly pointed
quasimetric spaces.

\begin{definition}
A \emph{doubly pointed space} is a set \(X\) with two distinct points
\(\{ q,p \} \subset X\). We denote doubly pointed spaces by \((X,q,p)\).
Morphisms \((X,q,p) \to (Y,q',p')\) are functions \(f: X \to Y\) s.t
\(f(q) = q'\) and \(f(p) = p'\).
\end{definition}

We refer to \(q\) as the \emph{origin} of \(X\) and to \(p\) as the
\emph{antipodal point} of \(X\) w.r.t the origin \(q\).

By convention, \((X,q,p)\) is either a bounded quasimetric space or an
unbounded quasimetric space equipped with a point at infinity. In the
unbounded case, we require that either \(q\) or \(p\) is the point at
infinity; typically, we choose \(p\) to play this role.

We further assume that the antipodal point \(p\) is always an
accumulation point of \(X\). This condition is automatic when \(p\) is
the point at infinity, since in that case the underlying metric space
\((X_{*},d)\) is assumed to be unbounded.

We need a criterion to tell when two points in a space are far apart.
One way to do this is to separate them by sublinear growing functions.
This idea was used by Y. Cornulier to define sublinear-bi-Lipschitz
equivalences \cite{Cornulier2019}.

Let \((X,d,q,p)\) is a doubly pointed quasimetric space. Given a
function \(u: \mathbb{R}_+ \to \mathbb{R}\), we say that two points
\(x, y \in X_{p}\) are \emph{separated by \(u\) at \(p\)} if \[
d_{p}(x,y) > u\big(d_{p}(x,q) + d_{p}(q,y)\big).
\] By convention, any point \(x \in X_{p}\) is separated from \(p\) at
\(p\) for any function \(u\). Here \(d_{p}\) denotes the metric Cayley
transform of \(d\) if \(p\) is a finite point. Otherwise it denotes
\(d\) itself.

For simplicity, we often write \(|x|_{p} = d_{p}(o,x)\) for the distance
from the origin, and \(x,y >_{p} u\) for \(x\) and \(y\) being separated
by \(u\) at \(p\). When the point \(p\) is clear from the context, we
may also write \(|x|\) for the distance to the origin and \(x,y > u\)
for the separation by \(u\).

We can call \(u\) a \emph{gauge}, i.e., a function that sets a scale of
``sight'' as a function of ``location''. To fix a gauge means to decide
what is ``near'' and ``far away''.

\begin{definition}
A function \(u: \mathbb{R}_+ \rightarrow \mathbb{R}\) is an admissible
gauge if

\begin{itemize}
\tightlist
\item
  it is nondecreasing, and
\item
  \(u\) grows sublinearly,
  i.e.~\(\limsup_{r \rightarrow \infty}\frac{u(r)}{r} = 0\).
\end{itemize}
\end{definition}

\begin{example}
\label{41fd96}
Take \(u\) to be \(u(r) = \log(n + r)\) where \(n \in \mathbb{N}\).
\end{example}

\begin{example}
\label{a516c1}
Let \((X,d,q, *_{X})\) be an unbounded doubly pointed quasimetric space
and \(u\) an admissible gauge. Any sequence \(\{ x_{n} \}\), s.t.
\(\lim_{n \to \infty} d(q,x_{n}) = \infty\) is separated by \(u\) from
the constant sequence \(\{ q \}\).
\end{example}

\begin{definition}
\label{f87e84}
Let \(f: (X, q, p) \to (Y, q', p')\) be a map between doubly pointed
quasimetric spaces. We say that \(f\) is an \emph{asymptotic-Möbius map
at \(p\)} (or an \emph{AM-map at} \(p\)) if the following condition
holds:

\begin{itemize}
\tightlist
\item
  There exists an admissible gauge \(u\) and a homeomorphism
  \(\eta: \mathbb{R}^+ \to \mathbb{R}^+\) such that for every quadruple
  \(x, y, z, w \in X\) that are pairwise separated by \(u\) at \(p\),
  the following inequality holds:
  \[[f(x), f(y), f(z), f(w)] < \eta([x, y, z, w]).\]
\end{itemize}

If we want to emphasize the gauge \(u\), we say that \(f\) is an
asymptotic-Möbius map at scale \(u\) at \(p\), and write \(f_{u}\). In
the special case when \(p\) is the point at infinity, \(f\) is called an
\emph{asymptotic-Möbius map at infinity}.
\end{definition}

Morally, asymptotic-Möbius maps at widely spaced points are
quasi-Möbius.

\begin{definition}
An asymptotic-Möbius map \(f: (X, q, p) \to (Y, q', p')\) at \(p\)
between doubly pointed quasimetric spaces is called \emph{regular}, if

\begin{enumerate}
\def\labelenumi{\arabic{enumi}.}
\tightlist
\item
  \(f\) is \emph{continuous} at \(p\).
\item
  For every neighborhood \(V\) of \(p\), there exists a neighborhood
  \(U\) of \(p'\) such that \[f^{-1}(U) \subset V.\]
\end{enumerate}
\end{definition}

\begin{example}
\label{a67dd2}
The Cayley transform and inverse metric Cayley transform are Möbius
equivalences, in particular they are regular AM-maps at all points and
all scales.
\end{example}

At some point we will need a coarse version of path connectivity.

\begin{definition}
A doubly pointed quasimetric space \((X,q,p)\) is \emph{asymptotically
chained at \(p\)}, if there exists an admissible gauge \(v_{q}\) such
that for all \(x,y \in X_{p}\) there exists a chain
\(x_1 = x, \dots, x_{k + 1} = y\) satisfying
\[\operatorname{max}_{\,i \in 1 \dots k}\{ d_{p}(x_i,x_{i+1})\} < v_{q}(|x|_{p} + |y|_{p}).\]
Here \(d_{p}\) denotes the Cayley transform of \(d\) if \(p\) is a
finite point, otherwise it denotes simply the quasimetric \(d\) itself.
\end{definition}

We say that \((X,q,p)\) is asymptotically chained at \(p\) w.r.t.
\(v_{q}\). If \(p\) is the point at infinity \(*_{X}\), we simply say
that \((X,q,*_{X})\) is \emph{asymptotically chained}.

\begin{remark}
\label{977137}
If \((X,q,*_{X})\) is asymptotically chained, then any asymptotic cone
of \(X\) at \(q\) is isometric to an asymptotic cone
\(\operatorname{Cone}^\omega_{\lambda_{n}} (X,q)\), where
\(\lambda_{n} = d(q,z_{n})\) for some sequence \(z_{n} \in X\), see
Lemma \ref{0c35a0}.
\end{remark}

Our most important technical step will be the following theorem, the
proof of which will occupy Section \ref{sec:amtheorem} after some
introductory definitions.

\begin{theorem}[AM{}]
\label{85f2fe}
Let \((X,q,p)\) be a doubly pointed quasimetric space that is
asymptotically chained at \(p\) and \(Y\) a quasimetric space. Let
\(f: (X,q,p) \to (Y,q',p')\) be a regular AM-map at \(p \in X\).

\begin{enumerate}
\def\labelenumi{\arabic{enumi}.}
\tightlist
\item
  If \(p\) is a finite point and \(f\) maps \(p\) to a finite point in
  \(Y\), then \(f\) induces a continuous, injective, quasimöbius map
  \(g\) between some tangent cones of \(X\) and \(Y\).
\item
  If \(p\) is a finite point and \(f\) maps \(p\) to the point at
  infinity of \(Y\) (if it exists), then \(f\) induces a continuous,
  injective, quasimöbius map \(g\) between some tangent cone of \(X\)
  and some asymptotic cone of \(Y\).
\item
  If \(p\) is the point at infinity of \(X\) (if it exists) and \(f\)
  maps \(p\) to a finite point in \(Y\), then \(f\) induces a
  continuous, injective, quasimöbius map \(g\) between some asymptotic
  cone of \(X\) and some tangent cone of \(Y\).
\item
  If \(p\) is the point at infinity of \(X\) (if it exists) and \(f\)
  maps \(p\) to the point at infinity of \(Y\) (if it exists), then
  \(f\) induces a continuous, injective, quasimöbius map \(g\) between
  some asymptotic cones of \(X\) and \(Y\).
\end{enumerate}
\end{theorem}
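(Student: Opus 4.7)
\emph{Reduction to one case.} The plan is to reduce all four cases to a single one via Möbius changes of coordinates, then build $g$ as an ultralimit of $f$ along matched rescalings, and finally upgrade the pointwise AM-inequality to a QM-modulus on the limit. By Example \ref{a67dd2}, both the metric Cayley transform and its inverse are Möbius equivalences, hence regular AM-maps at every scale. Pre-composing $f$ with an inverse Cayley transform of $X$ at an auxiliary finite point (which sends a finite $p$ to a newly introduced point at infinity), and post-composing analogously in $Y$, replaces every configuration by one in which both the marked point of $X$ and its image in $Y$ are points at infinity; under this replacement a tangent cone at a finite point becomes an asymptotic cone of the Cayley-transformed space, and conversely. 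It therefore suffices to treat case (4), so I assume from now on that $p = *_{X}$ and $f(*_{X}) = *_{Y}$.

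\emph{Construction of the induced map.} Fix a non-principal ultrafilter $\omega$ on $\mathbb{N}$. By Remark \ref{977137}, every asymptotic cone of $(X,q)$ has the form $\operatorname{Cone}^{\omega}_{\lambda_{n}}(X,q)$ for some scaling sequence $\lambda_n = d(q, z_n)$ with $z_n$ escaping to infinity in $X_{*_X}$. I define the companion scale on $Y$ by $\mu_n := d_{Y}\bigl(q', f(z_n)\bigr)$, and set
\[ g\bigl([x_n]\bigr) := \bigl[f(x_n)\bigr] \in \operatorname{Cone}^{\omega}_{\mu_n}(Y,q'). \]
Well-definedness on equivalence classes will follow from the same cross-ratio analysis that delivers the QM-inequality below: collapsing sequences on the $X$-side force collapsing sequences on the $Y$-side via the AM-inequality applied to a cross-ratio that couples the pair in question with two auxiliary reference points chosen at distance $\Theta(\lambda_n)$ apart.

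\emph{Passage to the limit.} Let $(x_n), (y_n), (z_n), (w_n)$ represent four pairwise distinct points of the cone. Their mutual $X$-distances are $\Theta(\lambda_n)$, while $d(q,x_n)+d(q,y_n) = O(\lambda_n)$; since $u$ is sublinear, the separation $d(x_n,y_n) > u(d(q,x_n)+d(q,y_n))$ holds for $\omega$-almost every $n$, and likewise for every other pair. The AM-inequality therefore applies, and taking the $\omega$-limit of both sides — using continuity of $\eta$ and the fact that the rescaled cross-ratios converge by the very definition of the cones — yields the QM-inequality $[g(x),g(y),g(z),g(w)] \leq \eta([x,y,z,w])$. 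Injectivity is the contrapositive: a nontrivial identification of classes would drive the $Y$-side cross-ratio to $0$ or $\infty$ while the $X$-side stays bounded and bounded away from $0$, contradicting QM. Continuity at interior points of the cone is automatic from $\eta$, while continuity at the distinguished point is exactly what the second clause of the regularity hypothesis supplies, via a preimage-of-neighborhoods argument that survives the ultralimit.

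\emph{Main obstacle.} The crux of the argument is the choice of $\mu_n$ and the verification that $g$ actually lands in, but does not degenerate inside, the $Y$-cone at scale $\mu_n$. This is precisely where asymptotic chaining at gauge $v_q$ enters: it allows one to connect $x_n$ to $z_n$ through a chain of $v_q$-short hops, each satisfying the AM-separation hypothesis, and thereby to bound $d_Y(q',f(x_n))$ both above and below in terms of $\mu_n$, uniformly in the class of $(x_n)$. Establishing this two-sided comparison while ensuring the separation hypothesis is met at every hop of the chain — and then propagating it across quadruples so that the AM-inequality may be integrated along the chain without accumulated loss — is the main technical labor behind the theorem; every other step (reduction, limit, continuity, injectivity) is then a routine consequence.
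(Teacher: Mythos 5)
Your overall architecture matches the paper's: reduce to the case $p=*_X$, $f(p)=*_Y$ via the (inverse) Cayley transforms of Propositions \ref{abfe05} and \ref{b5ff73}, set $\lambda_n=d(q,z_n)$, $\lambda_n'=d(q',f(z_n))$, define $g$ on representatives, and pass the AM-inequality to the $\omega$-limit using sublinearity of $u$ (this is the paper's Lemma \ref{e8bbb4}). But there is a genuine gap at the step you yourself flag as the ``main technical labor'': well-definedness of $g$ when two sequences $\{x_n\},\{y_n\}$ represent the \emph{same} finite cone point. Such sequences need not be $u$-separated, so no quadruple containing this pair is admissible for the AM-inequality, and your proposed remedy --- connect the points by a $v_q$-chain ``each hop satisfying the AM-separation hypothesis'' and then integrate the inequality along the chain --- does not work as stated. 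Consecutive points of an asymptotic chain are at distance less than $v_q(\cdot)$, i.e.\ \emph{below} a sublinear gauge, which is exactly the regime in which the separation hypothesis fails, so the AM-inequality cannot be applied hop by hop; and even if it could, the number of hops is unbounded in $n$, so composing the control $\eta(\cdot)$ once per hop accumulates a loss $\eta(\cdot)^{k_n}$ with $k_n\to\infty$, which destroys any uniform bound. (Your reduction paragraph also swaps the roles of the Cayley transform and its inverse: to send a finite $p$ to infinity one uses the Cayley transform \emph{at} $p$, Definition \ref{7c701f}, not an inverse transform at an auxiliary point --- a minor slip, but worth fixing.)

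The paper resolves the same-point case quite differently (Lemma \ref{6199f1}): asymptotic chaining is used only as a coarse intermediate-value device to show that a single \emph{bridge} point $w_n$ exists in an annulus around $y_n$ of inner radius $\approx 2u(4|y_n|)$ and thickness $v_q(|y_n|)$. Such a $w_n$ is $u$-separated from both $x_n$ and $y_n$ (its distance to them is $\gtrsim u(4|y_n|)$), yet $d(y_n,w_n)$ is sublinear in $\lambda_n$, so $\{w_n\}$ still represents the same cone point. One then applies the three-point (quasisymmetric) consequence of the AM-condition twice --- to the triples $(x_n,w_n,q)$ and $(y_n,w_n,q)$ --- and combines with the triangle inequality, with no iteration along the chain at all; this is what powers both well-definedness (Lemma \ref{390bb4}) and the case analysis in Lemma \ref{1a63dd}. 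Relatedly, the two-sided comparison of $d_Y(q',f(x_n))$ with $\lambda_n'$ that you attribute to chaining is in the paper obtained directly from the three-point condition applied to $(q,x_n,z_n)$ (chaining enters only when $x_\omega$ coincides with $z_\omega$ or with another representative of the same class), and nondegeneracy of $g$ is not proved pointwise but follows from $q_\omega\mapsto q'_\omega$, $z_\omega\mapsto z'_\omega$ with $D(q'_\omega,z'_\omega)=1$ together with the limiting quasisymmetric inequality. Without the bridge-point construction (or an equivalent substitute), your argument does not close.
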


\subsection{Asymptotic cones and tangent cones}
\label{sec:asymptotic_cones_and_tangent_cones}

The \emph{asymptotic cone} of a metric space \((X,d)\), captures the
geometry on the large scale of \(X\). Roughly speaking, it formalizes
the idea of snapshots of the space \(X\) taken by an observer moving
farther and farther away from \(X\). This sequence of snapshots can
stabilize and the observer has the impression of seeing a single object.
We call this object the asymptotic cone of \(X\).

If \(R\)-balls \(B(o,R)\) with metric \(R^{-1} d\) are uniformly
pre-compact, then the asymptotic cones can be constructed concretely by
pointed Gromov Hausdorff convergence \cite{Gromov1981},
\cite{Gromov2007}. In general, however, we have to resort to
constructions via ultrafilters.

\subsubsection{Nonprinciple ultrafilters and ultrapowers }
\label{sec:nonprinciple_ultrafilters_and_ultrapowers}

\begin{definition}[ultrafilter{}]
A (nonprincipal) ultrafilter \(\omega\) over \(\mathbb{N}\) is a set of
subsets of \(\mathbb{N}\) satisfying the following conditions:

\begin{itemize}
\tightlist
\item
  If \(A,B \in \omega\) then \(A \cap B \in \omega\).
\item
  If \(A \in \omega, A \subset B \subset \mathbb{N}\), then
  \(B \in \omega\).
\item
  For every \(A \subset \mathbb{N}\), either \(A \in \omega\) or
  \(\mathbb{N}\setminus A \in \omega\).
\item
  No finite subset of \(\mathbb{N}\) is in \(\omega\).
\end{itemize}

Equivalently, \(\omega\) is a finitely additive probability measure on
\(\mathbb{N}\) such that every subset has measure either \(0\) or \(1\)
and every finite subset has measure \(0\).
\end{definition}

If a proposition \(P(n)\) holds for all \(n \in A\), where \(A\) belongs
to an ultrafilter \(\omega\), then \(P(n)\) is said to hold
\emph{\(\omega\)-almost surely}.

\begin{definition}[$\omega$-limit{}]
Let \(\omega\) be a (nonprincipal) ultrafilter over \(\mathbb{N}\). An
\(\omega\)-limit of a sequence of points \(\{x_n\}\) in a topological
space \(X\) is a point \(x\) in \(X\) such that for any neighborhood
\(U\) of \(x\) the relation \(x_n \in U\) holds \(\omega\)-almost
surely.
\end{definition}

If \(X\) is a Hausdorff space, then the \(\omega\)-limit of a sequence
is unique. We denote this point by \(\lim_\omega x_n\).

\begin{remark}
\label{211a51}
Any sequence in a compact~Hausdorff first-countable topological space
has an \(\omega\)-limit.

If \(X\) is a complete metric space that is not locally compact, even a
bounded sequence may not have an \(\omega\)-limit. Take for example
\(X=\ell^2\) and let \(e_n\) be the standard orthonormal basis. Consider
the sequence \(\{ e_{n} \}\). For any fixed \(x\in\ell^2\) the
coordinates \(x_n\) of \(x\) tend to \(0\), so
\[\|e_n-x\|^2 = 1+\|x\|^2-2\langle e_n,x\rangle \longrightarrow 1+\|x\|^2,\]
hence \(\|e_n-x\|\) stays bounded away from \(0\). Thus there exists a
neighborhood \(U\) of \(x\) such that the set \(\{n:\,e_n\in U\}\) is
finite (hence not in \(\omega\)). Therefore \(\{ e_{n} \}\) has no
\(\omega\)-limit in \(\ell^2\).
\end{remark}

\begin{definition}[ultrapower{}]
\label{e54e25}
The ultrapower of a set \(X\) with respect to an ultrafilter \(\omega\),
denoted by \(X^\omega\), consists of equivalence classes of sequences
\(\{x_n\}\), \(x_n \in X\), where two sequences \(\{x_n\}\) and
\(\{y_n\}\) are identical if and only if \(x_n = y_n\) \(\omega\)-almost
surely.
\end{definition}

We adopt the notation \(\{x_n\}_\omega\) for the equivalence class of
sequences.

\begin{example}
\label{072303}
The ultrapower of \(\ell_{p}\) is linearly isometric to some
\(L^p(\mu)\) \cite{Dacunha-Castelle1972}.
\end{example}

\begin{remark}
\label{5ff9c9}
If \(X\) is a metric space, then for any sequence \(\{ x_{n} \}\) in
\(X\), either \(\lim_{\omega} d(o,x_{n}) = L < \infty\) or
\(\lim_{ \omega  } d(o,x_{n}) = \infty\) i.e.~for any \(C \geq 0\),
\(d(o,x_{n}) > C\), \(\omega\)-almost surely. Indeed, for any \(C > 0\),
and \[A_{C} = \{ n: d(o,x_{n}) \leq C \}\] either \(A_{C} \in \omega\)
or \(\mathbb{N} \setminus A_{C} \in \omega\). If \(A_{C} \in \omega\)
for some \(C\), then the \(\omega\)-limit of \(d(o,x_{n})\) exists and
is finite.
\end{remark}

\subsubsection{Asymptotic cones}
\label{sec:asymptotic_cones}

Let \((X,d)\) be a quasimetric space and \(\omega\) an ultrafilter over
\(\mathbb{N}\). Furthermore, let a sequence of numbers \(\{\lambda_n\}\)
be given, with
\href{limit_ultrafilters}{\(\lim_\omega \lambda_n = \infty\)}. Then
there is a pseudo-quasimetric on the ultrapower \(X^\omega\) by setting
the distance between \(x = \{x_n\}_\omega\) and \(y = \{y_n\}_\omega\)
to
\[D_{d/\lambda_{n}}(x,y) = \lim_\omega \frac{d(x_n,y_n)}{\lambda_n}.\]
Fix a finite point \(q \in X\) as origin and let
\(\operatorname{Cone}^\omega_{\lambda_{n}} (X,q)\) be the set of
equivalence classes of elements \(x = \{x_n\}_\omega\) in \(X^\omega\)
w.r.t \(D_{d/\lambda_{n}}\), i.e.~two elements \(x = \{x_n\}_\omega\)
and \(y = \{y_n\}_\omega\) of \(X^\omega\) s.t.
\(\lim_{ \omega } \frac{d(q,x_{n})}{\lambda_{n}} < \infty\) are
identical if and only if \(D_{d/\lambda_{n}}(x,y) = 0\). The point at
infinity \(*\) is represented by all sequences \(\{ x_{n} \}_{\omega}\)
s.t. \(\lim_{ \omega } \frac{d(q,x_{n})}{\lambda_{n}} = \infty\).

The quasimetric space
\((\operatorname{Cone}^\omega_{\lambda_{n}} (X,q),D_{d/\lambda_{n}})\)
is called an \emph{asymptotic cone} of \((X,d)\). We denote points in
the asymptotic cone
\((\operatorname{Cone}^\omega_{\lambda_{n}} (X, q),D_{d/\lambda_{n}})\)
by \(x_\omega\) or \([x_{n}]_{\omega}\). The fixed origin \(q \in X\)
gives rise to an origin \(q_{\omega}\) in the asymptotic cone
\((\operatorname{Cone}^\omega_{\lambda_{n}} (X, q),D_{d/\lambda_{n}})\).

\begin{remark}[{\cite[Lemma~5.53]{Bridson1999}}]
\label{5457e8}
Any asymptotic cone of a given metric space \(X\) is complete.
\end{remark}

\begin{remark}
\label{2c7630}
If \(X\) is a Banach space, then all asymptotic cones
\(\operatorname{Cone}^\omega_{\lambda_{n}} (X,0)\) of \(X\) are linearly
isometric to the ultrapower \(X^{\omega}\) of \(X\), see Lemma
\ref{ae93d0}.
\end{remark}

\begin{remark}[{\cite[Corollary~3.10~(4)]{Bridson1999}}]
\label{5dd0f4}
Any asymptotic cone of a \(\operatorname{CAT}(0)\)-space is a
\(\operatorname{CAT}(0)\)-space.
\end{remark}

\begin{example}
\label{1554d9}
From Remark \ref{2c7630} and Example \ref{072303}, it follows that all
asymptotic cones of \(\ell_{p}\) are linearly isometric to some
\(L_{p}(\mu)\).
\end{example}

\begin{remark}
\label{3620f5}
One may take any point \(x = \{ x_{n} \}_{\omega}\) in the ultrapower
\(X^{\omega}\) as origin. It is easy to verify that for any two points
\(x, y \in X^{\omega}\) with \(D(x,y)< \infty\),
\[\operatorname{Cone}^\omega_{\lambda_{n}} (X, x) = \operatorname{Cone}^\omega_{\lambda_{n}} (X, y).\]
\end{remark}

\begin{remark}
\label{374bcc}
For any sequence \(\{ x_{n} \}\) representing a finite point
\(x_{\omega}\) in the asymptotic cone
\(\operatorname{Cone}^{\omega}_{\lambda_{n}}(X,d,q)\). There exists an
\(M \geq 0\) s.t. \[d(q,x_{n}) \leq M\lambda_{n}\] for \(\omega\)-almost
every \(n\). This follows from the fact that by assumption, the sequence
\(\frac{d(q,x_{n})}{\lambda_{n}}\) is \(\omega\)-a.s. bounded.
\end{remark}

Recall that throughout this paper we work either with a bounded
quasimetric space or with a quasimetric space \((X,d)\) equipped with a
point at infinity \(*\), such that the underlying space \((X_{*},d)\) is
unbounded. Since every asymptotic cone of a bounded quasimetric space
reduces to a single point, our attention is restricted to asymptotic
cones of the latter type.

\begin{remark}
\label{fa1dba}
Notice that any point
\(x_{\omega} \in \operatorname{Cone}^{\omega}(X;o,\lambda_{n})\) that is
different from the origin \(o_{\omega}\) is associated with a sequence
\(\{ x_{n} \}\) in \((X_{*},d_{*})\) that \(\omega\)-a.s. converges to
the point \(*\), i.e.~\(\lim_{ \omega} x_{n} = *\).

This is clear if \(X_{*}\) is proper, since then \(X\) is compact
(Remark \ref{aa3217}) and \(\omega\)-limits always exist. Since
\(\lim_{ \omega  }d_{*}(o,x_{n}) = \infty\), the sequence necessarily
converges to \(*\). If \(X_{*}\) is not proper, then convergence follows
from the fact that \(\lim_{ \omega }d^{q}(x_{n},*) = 0\) for the inverse
metric Cayley transform \(d^{q}\) of \(d\), since
\(d^{q}(x_{n},*) \sim \frac{1}{1 + d(q,x_{n})}\) (see Remark
\ref{fea4a2}).
\end{remark}

\subsubsection{Tangent cones}
\label{sec:tangent_cones}

Let \((X,d)\) be a quasimetric space and \(\omega\) an ultrafilter over
\(\mathbb{N}\). Furthermore, let a sequence of numbers \(\{\lambda_n\}\)
be given, with \(\lim_\omega \lambda_n = \infty\). Then there is a
pseudo-quasimetric on the ultrapower \(X^\omega\) by setting the
distance between \(x = \{x_n\}_\omega\) and \(y = \{y_n\}_\omega\) to
\[D_{\lambda_{n}d}(x,y) = \lim_\omega \lambda_{n} \, d(x_n,y_n).\] Fix a
finite point \(p \in X\) as origin and let
\(\operatorname{Tan}^\omega_{\lambda_{n}} (X,p)\) be the set of
equivalence classes of elements \(x = \{x_n\}_\omega\) in \(X^\omega\)
w.r.t. \(D_{\lambda_{n}d}\), i.e.~two elements \(x = \{x_n\}_\omega\)
and \(y = \{y_n\}_\omega\) of \(X^\omega\) satisfying
\(\lim_\omega \lambda_{n} \, d(x_n,p) <\infty\) are identical if and
only if \(D_{\lambda_{n}d}(x,y) = 0\). The point at infinity \(*\) is
represented by all sequences \(\{ x_{n} \}_{\omega}\) s.t.
\(\lim_{ \omega } \lambda_{n}\,d(p,x_{n}) = \infty\).

The quasimetric space
\((\operatorname{Tan}^\omega_{\lambda_{n}} (X,p),D_{\lambda_{n}d})\) is
called a \emph{tangent cone} of \((X,d)\) at \(p\). We denote points in
the tangent cone
\((\operatorname{Tan}^\omega_{\lambda_{n}} (X, p),D_{\lambda_{n}d})\) by
\(x_\omega\). The fixed origin \(p \in X\) gives rise to an origin
\(p_{\omega}\) in the tangent cone
\((\operatorname{Tan}^\omega_{\lambda_{n}} (X, p),D_{\lambda_{n}d})\).

\begin{remark}
\label{81d7f8}
For any sequence \(\{ x_{n} \} \subset (X,d)\) representing a finite
point \(x_{\omega}\) in the tangent cone
\(\operatorname{Tan}^{\omega}_{\lambda_{n}}(X,p)\). There exists an
\(M \geq 0\) s.t. \[d(p,x_{n}) \leq \frac{M}{\lambda_{n}}\] for
\(\omega\)-almost every \(n\). This follows from the fact that by
assumption, the sequence \(\lambda_{n}\,d(p,x_{n})\) is \(\omega\)-a.s.
bounded.
\end{remark}

\begin{remark}
\label{e29e4b}
Any point
\(x_{\omega} \in \operatorname{Tan}^{\omega}_{\lambda_{n}}(X,p)\), s.t.
\(x_{\omega} \neq p_{\omega}\), is associated with a sequence
\(\{ x_{n} \} \subset X_{p}\), \(\omega\)-a.s., such that
\[\lim_{ \omega} x_{n} = p.\] Furthermore, there exists \(m > 0\) s.t.
\[\frac{m}{\lambda_{n}} \leq d(p,x_{n}).\] This follows from the fact
that by assumption, the sequence \(\lambda_{n}\,d(p,x_{n})\) is
\(\omega\)-a.s. bounded away from \(0\). If \(x_{\omega}\) is also
finite, then for any \(\epsilon > 0\),
\[\frac{(1-\epsilon)\lambda_{n}}{M} \leq d(p,x_{n}) \leq \frac{(1 + \epsilon)\lambda_{n}}{m}. \quad \omega-\text{a.s}.\]
\end{remark}

\subsubsection{Möbius equivalence of asymptotic cones and tangent cones}
\label{sec:mobius_equivalence_of_asymptotic_cones_and_tangent_cones}

One may think of an asymptotic cone of an unbounded quasimetric space as
a tangent cone at the point at infinity. The following Propositions make
this idea precise.

\begin{proposition}
\label{abfe05}
For any sequence \(\{ \lambda_{n} \}\), with
\(\lim_{ \omega }\lambda_{n} = \infty\), the metric Cayley transform at
a finite point \(p \in X\),
\[\mathcal{C}_{p}: (X,d,q,p) \rightarrow (X,d_{p},q,*_{X}),\] induces
the metric Cayley transform
\[\mathcal{C}_{p_{\omega}}: (\operatorname{Tan}^{\omega}_{\lambda_{n}}(X,d),*,p_{\omega}) \to (\operatorname{Cone}^{\omega}_{\lambda_{n}}(X,d_{p}),q_{\omega},*).\]
In particular, \(\operatorname{Tan}^{\omega}_{\lambda_{n}}(X,d)\) and
\(\operatorname{Cone}^{\omega}_{\lambda_{n}}(X,d_{p})\) are Möbius
equivalent.
\end{proposition}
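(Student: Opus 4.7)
The plan is to exhibit a natural bijection
\[
\Phi \colon \operatorname{Tan}^{\omega}_{\lambda_{n}}(X,d,p) \longrightarrow \operatorname{Cone}^{\omega}_{\lambda_{n}}(X,d_{p},q)
\]
that sends the equivalence class of a sequence $\{x_n\}_\omega$ in the tangent cone to the class of the same sequence in the asymptotic cone, identifies the distinguished points by $\Phi(p_\omega) = *$ and $\Phi(*) = q_\omega$, and intertwines the two cone metrics through the Cayley formula
\[
D_{d_p/\lambda_n}\bigl(\Phi(x_\omega),\Phi(y_\omega)\bigr) = \frac{D_{\lambda_n d}(x_\omega,y_\omega)}{D_{\lambda_n d}(p_\omega,x_\omega)\,D_{\lambda_n d}(p_\omega,y_\omega)}.
\]
Once this is established, $\Phi$ is, up to the canonical identification of underlying sets just described, the Cayley transform $\mathcal{C}_{p_\omega}$ of the tangent cone at the finite point $p_\omega$, and the Möbius equivalence of the two cones follows from Remark \ref{47aba5}.

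First I would stratify sequences by the value $\alpha := \lim_{\omega} \lambda_n d(p, x_n) \in [0,\infty]$. For $\alpha \in (0,\infty)$ the quasi-triangle inequality forces $d(q,x_n)$ to stay in a compact subinterval of $(0,\infty)$, so
\[
\frac{d_p(q, x_n)}{\lambda_n} = \frac{d(q, x_n)}{d(q,p)\, d(p, x_n)\, \lambda_n}
\]
has finite nonzero $\omega$-limit, and $\{x_n\}_\omega$ represents a finite class of $\operatorname{Cone}$ distinct from $q_\omega$ and from the point at infinity $*$. If $\alpha = 0$, then $d(p,x_n)$ tends to zero faster than $1/\lambda_n$, so $d_p(q,x_n)$ is comparable to $1/d(p,x_n)$ and grows faster than $\lambda_n$; hence $\{x_n\}_\omega$ falls into the point at infinity of $\operatorname{Cone}$, fixing $\Phi(p_\omega) = *$. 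If $\alpha = \infty$, the quasi-triangle bound
\[
\frac{d_p(q, x_n)}{\lambda_n} \leq \frac{K\bigl(d(q,p) + d(p, x_n)\bigr)}{d(q,p)\, d(p, x_n)\, \lambda_n} = \frac{K}{d(p, x_n)\,\lambda_n} + \frac{K}{\lambda_n}
\]
tends to zero regardless of whether $d(p,x_n)$ stays bounded or diverges; so every representative of $*$ in the tangent cone collapses onto $q_\omega$ in $\operatorname{Cone}$, giving $\Phi(*) = q_\omega$.

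For the reverse direction I would take a finite class $y_\omega \ne q_\omega$ in $\operatorname{Cone}$ represented by $\{y_n\}$ with $d_p(q,y_n)/\lambda_n$ of finite nonzero $\omega$-limit. The same quasi-triangle argument shows $d(p,y_n) \to 0$ (else $d_p(q,y_n)$ stays bounded and the ratio would vanish), and rearranging gives $\lambda_n d(p,y_n)$ with finite nonzero $\omega$-limit, so $\{y_n\}_\omega$ is a finite class of $\operatorname{Tan}^\omega_{\lambda_n}(X,d,p)$ different from $p_\omega$. Injectivity and surjectivity now follow, as do $\Phi^{-1}(*) = p_\omega$ and $\Phi^{-1}(q_\omega) = *$. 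The Cayley identity on metrics is then simply
\[
\frac{d_p(x_n, y_n)}{\lambda_n} = \frac{\lambda_n\, d(x_n, y_n)}{\bigl(\lambda_n\, d(p, x_n)\bigr)\bigl(\lambda_n\, d(p, y_n)\bigr)},
\]
which upon taking $\omega$-limits yields the displayed formula of paragraph one for finite classes $x_\omega, y_\omega \ne p_\omega$; the remaining boundary values at $p_\omega$ and $*$ are forced by the extended-metric conventions of Definition \ref{7c701f}.

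The main obstacle is the collapse at the point at infinity of the tangent cone: a priori the many sequences in the single class $*$ could land in different classes of $\operatorname{Cone}$. The resolution, captured in the second display of paragraph two, is that the quasi-triangle inequality coupled with $\lambda_n d(p, x_n) \to \infty$ drives $d_p(q, x_n)/\lambda_n \to 0$ uniformly across representatives. Once this collapse is in hand, the rest of the argument is a routine transfer of the Cayley identity through the ultraproduct construction, and the Möbius-equivalence conclusion follows from the general Möbius invariance of the Cayley transform on quasimetric spaces.
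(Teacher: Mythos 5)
Your argument is correct and follows essentially the same route as the paper's proof: both realize the two cones as quotients of the ultrapower $X^{\omega}$ and check that the identity on sequences descends, using the algebraic identity $\tfrac{d_p(x_n,y_n)}{\lambda_n}=\tfrac{\lambda_n d(x_n,y_n)}{(\lambda_n d(p,x_n))(\lambda_n d(p,y_n))}$ together with the comparison of $\tfrac{d_p(q,x_n)}{\lambda_n}$ with $\tfrac{1}{\lambda_n d(p,x_n)}$, your stratification by $\alpha=\lim_\omega \lambda_n d(p,x_n)$ being only an organizational variant. The one small omission is the case where $q$ is the point at infinity of $(X,d)$ (your formula $d_p(q,x_n)=\tfrac{d(q,x_n)}{d(q,p)\,d(p,x_n)}$ and the compactness argument tacitly assume $q$ finite), which the paper treats separately but which is immediate from the convention $d_p(x,*_X)=\tfrac{1}{d(x,p)}$.
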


\begin{proof}
Both the asymptotic cone
\((\operatorname{Cone}^{\omega}_{\lambda_{n}}(X,d_{p}),D_{d_{p}/\lambda_{n}})\)
and the tangent cone
\((\operatorname{Tan}^{\omega}_{\lambda_{n}}(X,d),D_{\lambda_{n}d})\)
are quotients of the ultrapower \(X^{\omega}\). We need to show that the
pseudo-quasimetrics \(D_{d_{p}/\lambda_{n}}\) and \(D_{\lambda_{n}d}\)
induce the same equivalence classes on \(X^{\omega}\).

\begin{enumerate}
\def\labelenumi{\arabic{enumi}.}
\tightlist
\item
  \label{da7ea0}
  We first show that for any \(\{ x_{n} \}_{\omega} \in X^{\omega}\),
  \(D_{d_{p}/\lambda_{n}}(\{ q \}_{\omega},\{ x_{n} \}_{\omega}) = \frac{1}{D_{\lambda_{n}d}(\{ p \}_{\omega},\{ x_{n} \}_{\omega})}\).
  If \(q\) is the point at infinity of \((X,d)\), then
  \(d_{p}(q,x_{n}) = \frac{1}{d(p,x_{n})}\) and thus
  \[\begin{split}D_{d_{p}/\lambda_{n}}(\{ q \}_{\omega},\{ x_{n} \}_{\omega}) &= \lim_{  \omega  } \frac{d_{p}(x_{n},q)}{\lambda_{n}} \\&=\lim_{ \omega  } \frac{1}{\lambda_{n}d(p,x_{n})}=\frac{1}{D_{\lambda_{n}d}(\{ p \}_{\omega},\{ x_{n} \}_{\omega})}.\end{split}\]
  If \(q\) is a finite point, then
  \[\left|\frac{1}{\lambda_{n}d(p,x_{n})} - \frac{1}{\lambda_{n}d(q,p)}\right|\leq\frac{d_{p}(q,x_{n})}{\lambda_{n}} \leq \frac{1}{\lambda_{n}d(q,p)} + \frac{1}{\lambda_{n}d(p,x_{n})},\]
  and thus again,
  \(D_{d_{p}/\lambda_{n}}(\{ q \}_{\omega},\{ x_{n} \}_{\omega}) =\frac{1}{D_{\lambda_{n}d}(\{ p \}_{\omega},\{ x_{n} \}_{\omega})}\).
\item
  \label{58176b}
  Let \(\{ x_{n} \}_{\omega}, \{ y_{n} \}_{\omega} \in X^{\omega}\) s.t.
  \(D_{\lambda_{n}d}(\{ p \}_{\omega},\{ x_{n} \}_{\omega}), D_{\lambda_{n}d}(\{ p \}_{\omega},\{ y_{n} \}_{\omega}) \not\in \{ 0,\infty \}\),
  then
  \[\begin{split}D_{d_{p}/\lambda_{n}}(\{ x_{n} \}_{\omega},\{ y_{n} \}_{\omega}) &= \lim_{ \omega } \frac{\lambda_{n}d(x_{n},y_{n})}{\lambda_{n}d(x_{n},p)\,\lambda_{n}d(p,y_{n})} \\&= \frac{D_{\lambda_{n}d}(\{ x_{n} \}_{\omega},\{ y_{n} \}_{\omega})}{D_{\lambda_{n}d}(\{ x_{n} \}_{\omega},\{ p \}_{\omega})D_{\lambda_{n}d}(\{ p \}_{\omega},\{ y_{n} \}_{\omega})}.\end{split}\]
\end{enumerate}

From \ref{da7ea0} and \ref{58176b} follows that
\(\mathcal{C}_{p_{\omega}}\) is a well-defined, bijective map, that maps
the point at infinity in
\(\operatorname{Tan}^{\omega}_{\lambda_{n}}(X,d)\) to the origin
\(q_{\omega}\) in
\(\operatorname{Cone}^{\omega}_{\lambda_{n}}(X,d_{p})\), and the origin
\(p_{\omega}\) in \(\operatorname{Tan}^{\omega}_{\lambda_{n}}(X,d)\) to
the point at infinity in
\(\operatorname{Cone}^{\omega}_{\lambda_{n}}(X,d_{p})\).

Clearly, any sequence \(\{ x_{\omega}^{j} \}\), that converges to a
point \(x_{\omega}\) as \(j \to \infty\) with respect to the quasimetric
\(D_{\lambda_{n}d}\) also converges to \(x_{\omega}\) w.r.t. the
quasimetric \(D_{d_{p}/\lambda_{n}}\) and conversely. Thus
\(\mathcal{C}_{p_{\omega}}\) is a homeomorphism. Since also
\[[x_{\omega},y_{\omega},z_{\omega},w_{\omega}]_{D_{\lambda_{n}d}} = [x_{\omega},y_{\omega},z_{\omega},w_{\omega}]_{D_{d_{p}/\lambda_{n}}},\]
\(\mathcal{C}_{p_{\omega}}\) is a Möbius equivalence.

\end{proof}

\begin{proposition}
\label{b5ff73}
For any sequence \(\{ \lambda_{n} \}\), with
\(\lim_{ \omega }\lambda_{n} = \infty\), the inverse metric Cayley
transform at a finite point \(q \in X\),
\[\mathcal{C}^{q}: (X,d,q,*_{X}) \rightarrow (X,d^{q},q,p),\] induces
the metric Cayley transform
\[\mathcal{C}_{q_{\omega}}: (\operatorname{Cone}^{\omega}_{\lambda_{n}}(X,d),q_{\omega},*) \to (\operatorname{Tan}^{\omega}_{\lambda_{n}}(X,d^{q}),*,p_{\omega}).\]
In particular, \(\operatorname{Cone}^{\omega}_{\lambda_{n}}(X,d)\) and
\(\operatorname{Tan}^{\omega}_{\lambda_{n}}(X,d^{q})\) are Möbius
equivalent.
\end{proposition}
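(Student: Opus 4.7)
The plan is to mirror the argument of Proposition \ref{abfe05}, now running in the opposite direction along the Cayley duality. Both target spaces are quotients of the same ultrapower $X^\omega$: the asymptotic cone $(\operatorname{Cone}^{\omega}_{\lambda_{n}}(X,d),D_{d/\lambda_{n}})$ on one side, and the tangent cone $(\operatorname{Tan}^{\omega}_{\lambda_{n}}(X,d^{q}),D_{\lambda_{n}d^{q}})$ on the other. It therefore suffices to identify these two pseudo-quasimetrics as a Cayley pair in the ultralimit, after which the desired map $\mathcal{C}_{q_{\omega}}$ arises automatically as the induced bijection on equivalence classes.

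First, using the formula $d^{q}(x_{n},*_{X})=\frac{1}{1+d(x_{n},q)}$ from Definition \ref{226dfd}, I would compute the distance from $p_{\omega}$ (the class of the constant sequence at $p=*_{X}$, which is a finite point of $(X,d^{q})$) to an arbitrary class $\{x_{n}\}_{\omega}$:
\[
D_{\lambda_{n}d^{q}}(p_{\omega},\{x_{n}\}_{\omega})=\lim_{\omega}\frac{\lambda_{n}}{1+d(x_{n},q)}=\frac{1}{D_{d/\lambda_{n}}(q_{\omega},\{x_{n}\}_{\omega})},
\]
reading $1/0=\infty$ and $1/\infty=0$ projectively as in Remark \ref{7ac1d2}. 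This already pins down the three special-point correspondences: the asymptotic-cone origin $q_{\omega}$ goes to the point at infinity of the tangent cone, and the point at infinity $*$ of the asymptotic cone goes to $p_{\omega}$.

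Next, for classes $\{x_{n}\}_{\omega}$, $\{y_{n}\}_{\omega}$ representing finite, non-origin, non-infinity points of the asymptotic cone, the defining relations $d(x_{n},q)/\lambda_{n},d(y_{n},q)/\lambda_{n}\to L,L'$ with $0<L,L'<\infty$ force $d(x_{n},q),d(y_{n},q)\to\infty$ $\omega$-almost surely, so the $+1$ in the denominator of
\[
\lambda_{n}d^{q}(x_{n},y_{n})=\frac{\lambda_{n}d(x_{n},y_{n})}{(1+d(x_{n},q))(1+d(y_{n},q))}
\]
is negligible in the ultralimit. Dividing numerator and denominators by appropriate powers of $\lambda_{n}$ yields the Cayley identity
\[
D_{\lambda_{n}d^{q}}(\{x_{n}\}_{\omega},\{y_{n}\}_{\omega})=\frac{D_{d/\lambda_{n}}(\{x_{n}\}_{\omega},\{y_{n}\}_{\omega})}{D_{d/\lambda_{n}}(q_{\omega},\{x_{n}\}_{\omega})\,D_{d/\lambda_{n}}(q_{\omega},\{y_{n}\}_{\omega})},
\]
which is precisely the expression one expects from the metric Cayley transform at the finite point $q_{\omega}$ of the asymptotic cone.

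Together these two computations show that the identity on $X^{\omega}$ descends to a well-defined bijection between the quotients and that this bijection implements the Cayley transform $\mathcal{C}_{q_{\omega}}$; preservation of cross-ratios then follows from Remark \ref{47aba5}, while the homeomorphism property for the extended-metric topologies follows from Möbius invariance of those topologies (Remark \ref{589a02} and Remark \ref{8eb339}), exactly as in Proposition \ref{abfe05}. I do not expect any genuine obstacle beyond careful bookkeeping of the interchange of $0$ and $\infty$ at the two distinguished points, since every step is dual to the argument already carried out for $\mathcal{C}_{p_{\omega}}$.
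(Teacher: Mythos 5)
Your proposal is correct and follows essentially the same route as the paper's proof: both reduce the claim to two ultralimit computations on $X^{\omega}$ — the identity $D_{\lambda_{n}d^{q}}(p_{\omega},\{x_{n}\}_{\omega})=1/D_{d/\lambda_{n}}(q_{\omega},\{x_{n}\}_{\omega})$ and the Cayley identity for pairs of finite non-origin points — and then conclude bijectivity, the homeomorphism property, and cross-ratio preservation. The only cosmetic difference is that you justify dropping the $+1$ via $d(x_{n},q)\to\infty$, while the paper divides through by $\lambda_{n}$ and lets $1/\lambda_{n}\to 0$; these are the same estimate.
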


\begin{proof}
Both the asymptotic cone
\((\operatorname{Cone}^{\omega}_{\lambda_{n}}(X,d),D_{d/\lambda_{n}})\)
and the tangent cone
\((\operatorname{Tan}^{\omega}_{\lambda_{n}}(X,d^{q}),D_{\lambda_{n}d^{q}})\)
are quotients of the ultrapower \(X^{\omega}\). We need to show that the
pseudo-quasimetrics \(D_{d/\lambda_{n}}\) and \(D_{\lambda_{n}d^{q}}\)
induce the same equivalence classes on \(X^{\omega}\).

\begin{enumerate}
\def\labelenumi{\arabic{enumi}.}
\tightlist
\item
  \label{73644b}
  We first show that for any \(\{ x_{n} \}_{\omega} \in X^{\omega}\),
  \[D_{\lambda_{n}d^{q}}(\{ p \}_{\omega},\{ x_{n} \}_{\omega}) = \frac{1}{D_{d/\lambda_{n}}(\{ q \}_{\omega},\{ x_{n} \}_{\omega})}.\]
  The point \(p\) is the point at infinity \(*_{X}\) in \((X,d)\), thus
  \(d^{q}(p,x_{n}) = \frac{1}{1 + d(q,x_{n})}\) and thus
  \[\begin{split}D_{\lambda_{n}d^{q}}(\{ p \}_{\omega},\{ x_{n} \}_{\omega}) &= \lim_{\omega}\lambda_{n}\,d^{q}(p,x_{n}) \\&= \lim_{\omega}\frac{\lambda_{n}}{1 + d(q,x_{n})}=\frac{1}{D_{d/\lambda_{n}}(\{ q \}_{\omega},\{ x_{n} \}_{\omega})}.\end{split}\]
\item
  \label{4b2814}
  Let \(\{ x_{n} \}_{\omega}, \{ y_{n} \}_{\omega} \in X^{\omega}\) s.t.
  \(D_{d/\lambda_{n}}(\{ q \}_{\omega},\{ x_{n} \}_{\omega}), D_{d/\lambda_{n}}(\{ q \}_{\omega},\{ y_{n} \}_{\omega}) \not\in \{ 0,\infty \}\),
  then
  \[\begin{split}D_{\lambda_{n}d^{q}}(\{ x_{n} \}_{\omega},\{ y_{n} \}_{\omega}) &= \lim_{ \omega } \frac{\frac{d(x_{n},y_{n})}{\lambda_{n}}}{\left( \frac{1}{\lambda_{n}} + \frac{d(q,x_{n})}{\lambda_{n}} \right)\left( \frac{1}{\lambda_{n}} + \frac{d(p,y_{n})}{\lambda_{n}} \right)} \\&= \frac{D_{d/\lambda_{n}}(\{ x_{n} \}_{\omega},\{ y_{n} \}_{\omega})}{D_{d/\lambda_{n}}(\{ x_{n} \}_{\omega},\{ q \}_{\omega})D_{d/\lambda_{n}}(\{ q \}_{\omega},\{ y_{n} \}_{\omega})}.\end{split}\]
\end{enumerate}

From \ref{73644b} and \ref{4b2814} follows that
\(\mathcal{C}_{q_{\omega}}\) is a well-defined, bijective map, that maps
the point at infinity in
\(\operatorname{Cone}^{\omega}_{\lambda_{n}}(X,d)\) to the origin
\(p_{\omega}\) in
\(\operatorname{Tan}^{\omega}_{\lambda_{n}}(X,d^{q})\), and the origin
\(q_{\omega}\) in \(\operatorname{Cone}^{\omega}_{\lambda_{n}}(X,d)\) to
the point at infinity in
\(\operatorname{Tan}^{\omega}_{\lambda_{n}}(X,d^{q})\).

Clearly, any sequence \(\{ x_{\omega}^{j} \}\), that converges to a
point \(x_{\omega}\) as \(j \to \infty\) with respect to the quasimetric
\(D_{d/\lambda_{n}}\) also converges to \(x_{\omega}\) w.r.t. the
quasimetric \(D_{\lambda_{n}d^{q}}\) and conversely. Thus
\(\mathcal{C}_{q_{\omega}}\) is a homeomorphism. Since also
\[[x_{\omega},y_{\omega},z_{\omega},w_{\omega}]_{D_{d/\lambda_{n}}} = [x_{\omega},y_{\omega},z_{\omega},w_{\omega}]_{D_{\lambda_{n}d^{q}}},\]
\(\mathcal{C}_{q_{\omega}}\) is a Möbius equivalence.

\end{proof}

We will now prove some helpful lemmas that relate asymptotic cones and
tangent cones with \(u\)-separation. Thanks to their Möbius equivalence,
we only prove the lemmas for the case of an asymptotic cone.

\begin{lemma}
\label{e8bbb4}
Let \((X,d,q, *_{X})\) be an unbounded doubly pointed quasimetric space.
Given two distinct points
\(x_{\omega}, y_{\omega} \in \operatorname{Cone}^{\omega}_{\lambda_{n}}(X,q)\),
then for every admissible gauge \(u\) and all sequences \(\{x_n\}\),
\(\{y_n\}\) representing \(x_\omega\) and \(y_\omega\), respectively,
the points \(x_{n}\) and \(y_{n}\) are separated by \(u\) at infinity
for \(\omega\)-almost every \(n\), i.e.
\[d(x_n,y_n) > u(|x_n| + |y_n|),\quad \omega-a.s.\]
\end{lemma}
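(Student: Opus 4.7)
The plan is to split on whether both $x_\omega$ and $y_\omega$ are finite points of $\operatorname{Cone}^\omega_{\lambda_n}(X,q)$, or exactly one of them is the cone's point at infinity $*$ (they cannot both equal $*$, since that is a single point). As a preliminary reduction I will replace the quasimetric $d$ by a comparable extended metric $\rho$ with $\rho/K \leq d \leq K\rho$ and simultaneously replace the gauge $u$ by $r \mapsto K\,u(r/K)$, which is again nondecreasing and sublinear; this lets me argue as though $d$ itself satisfied the ordinary triangle inequality, absorbing the quasimetric constant into the gauge.

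Assume first that both $x_\omega$ and $y_\omega$ are finite. Then $D_{d/\lambda_n}(x_\omega,y_\omega) = c \in (0,\infty]$, and Remark \ref{374bcc} supplies constants $M_x,M_y \geq 0$ with $d(q,x_n) \leq M_x \lambda_n$ and $d(q,y_n) \leq M_y \lambda_n$ for $\omega$-a.s.~$n$. For any $\varepsilon \in (0,c)$ one has $d(x_n,y_n) > (c-\varepsilon)\lambda_n$ $\omega$-a.s.\ (and arbitrarily large multiples of $\lambda_n$ if $c = \infty$), while sublinearity of $u$ together with $\lim_\omega \lambda_n = \infty$ yields
\[
u(|x_n|+|y_n|) \leq u((M_x+M_y)\lambda_n) = o(\lambda_n).
\]
Intersecting the relevant $\omega$-sets gives the desired strict inequality. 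In the remaining case, say $y_\omega = *$, I use that $d(q,y_n)/\lambda_n \to \infty$ $\omega$-a.s., combined with $d(q,x_n) \leq M_x\lambda_n$, to deduce via the triangle inequality that $d(x_n,y_n) \geq d(q,y_n)/2$ and $|x_n|+|y_n| \leq 2\,d(q,y_n)$ $\omega$-a.s.; sublinearity of $u$ applied as $d(q,y_n) \to \infty$ then yields $u(2\,d(q,y_n)) < d(q,y_n)/2 \leq d(x_n,y_n)$ $\omega$-a.s.

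The argument is essentially bookkeeping with the ultrafilter axiom that finite intersections of $\omega$-sets lie in $\omega$; its conceptual content is the simple fact that a sublinear gauge vanishes at the scale $\lambda_n$ of the asymptotic cone, so it cannot compete with the strictly positive cone-distance $D_{d/\lambda_n}(x_\omega,y_\omega) > 0$. The mildest subtlety, and the place I would expect a small slip, is the mixed case $y_\omega = *$, where the direct rescaling argument is unavailable and one instead has to leverage the triangle inequality with $d(q,y_n)$, which outpaces $\lambda_n$, to control both $d(x_n,y_n)$ from below and $|x_n|+|y_n|$ from above.
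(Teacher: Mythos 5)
Your proof is correct and follows essentially the same route as the paper's: the same dichotomy (both cone points finite versus one being the cone's point at infinity), the same use of the bounds $d(q,x_n)\leq M\lambda_n$ from Remark \ref{374bcc}, and the same comparison of the sublinear gauge evaluated at scale $O(\lambda_n)$ (resp.\ $O(d(q,y_n))$) against the lower bounds $d(x_n,y_n)\geq(c-\varepsilon)\lambda_n$ (resp.\ $d(q,y_n)/2$). The one bookkeeping slip is in your preliminary reduction: to transfer separation from the comparable metric $\rho$ back to the quasimetric $d$ you must \emph{enlarge} the gauge to $r\mapsto K\,u(Kr)$, not $r\mapsto K\,u(r/K)$ (with the latter the needed inequality $\tilde u(s)/K\geq u(Ks)$ fails for nondecreasing $u$), but this is a one-line fix that leaves the rest of your argument intact.
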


\begin{proof}
Let us first assume that \(y_{\omega}\) is the point at infinity
\(*_{\omega}\) of \(\operatorname{Cone}^{\omega}_{\lambda_{n}}(X,q)\).
Then \(x_{\omega}\) is necessarily a finite point. This implies that
there exists an \(M>0\) s.t. \[d(q,x_{n}) \leq M \lambda_{n},\] and for
any \(m > M\), \[d(q,y_{n}) \leq m \lambda.\] In order to show
\(d(x_n,y_n) > u(\left|x_n\right|+\left|y_n\right|)\), \(\omega\)-a.s.
we first show two simpler inequalities, namely

\begin{equation}
\label{4441c2}
\left|x_n\right| \leq \frac{\left|y_n\right|}{2}\end{equation}

and

\begin{equation}
\label{d75feb}
\left|x_n\right| \leq 2d(x_n, y_n)\end{equation}

The first inequality follows from
\[|x_{n}| \leq M \lambda_{n} \leq \frac{M}{m} |y_{n}|.\quad \omega-\text{a.s.}\]
Since \(\frac{M}{m}\) can be made arbitrary small the first inequality
follows. For the second inequality, the following holds \(\omega\)-a.s.,
\[
\begin{split}
d(x_{n},y_{n}) &\geq |y_{n}| \left|1 - \frac{|x_{n}|}{|y_{n}|}\right| \\
&\geq |y_{n}|\left(1 - \frac{M}{m}\right).
\end{split}
\] Again since \(\frac{M}{m}\) can be made arbitrary small,
\(|y_{n}|\leq 2\,d(x_{n},y_{n})\).

We now use equation \ref{4441c2} and equation \ref{d75feb} to show that
\(d(x_n,y_n) > u(\left|x_n\right|+\left|y_n\right|)\), \(\omega\)-almost
surely. Indeed, for almost every \(n\), \[
\begin{aligned}
u(|x_{n}| + |y_{n}|) &\leq u(2|y_{n}|) \\
&< \frac{1}{2} |y_{n}| \\
&\leq d(x_{n},y_{n}).
\end{aligned}
\]

Let us now assume that both \(x_{\omega}\) and \(y_{\omega}\) are finite
points, then \(\{x_n\}, \{y_n\} \subset X_{*_{X}}\), \(\omega\)-almost
surely. Assume there exists an admissible gauge \(u\) such that
\[d(x_n,y_n) \leq u(|x_n| + |y_n|), \quad \omega-a.s.\] then for some
\(M \geq 0\), \(\omega-a.s.\),
\[\frac{d(x_n,y_n)}{\lambda_n} \leq \frac{u(|x_n| + |y_n|)}{\lambda_n} \leq \frac{u(2M\lambda_{n})}{\lambda_{n}},\]
as of Remark \ref{374bcc}. So
\[\lim_\omega \frac{d(x_n,y_n)}{\lambda_n} = 0,\] which contradicts the
assumption that \(x_{\omega}\) and \(y_{\omega}\) are distinct.

\end{proof}

\begin{lemma}
\label{6199f1}
Let \((X,d,q,*_{X})\) be an asymptotically chained quasimetric space.
Let \(\{x_n\}\) and \(\{y_n\}\) be two sequences in \(X_{*_{X}}\)
representing the same finite point
\(x_\omega \in \operatorname{Cone}^{\omega}_{\lambda_{n}}(X,q)\) s.t.
\(\lim_{ \omega } d(q,x_{n})=\infty\) and
\(\lim_{ \omega } d(q,y_{n})=\infty\). Then, for any admissible gauge
\(u\), either the points \(x_{n}\) and \(y_{n}\) are separated by \(u\)
\(\omega\)-a.s. at infinity, i.e.
\[x_n,y_n >_{\infty} u, \quad \omega-a.s.\] or there exists a sequence
\(\{w_n\} \subset X_{*_{X}}\) representing the same point \(x_{\omega}\)
s.t \[x_n,w_n >_{\infty} u \quad \omega-a.s.\] and
\[y_n,w_n >_{\infty} u \quad \omega-a.s.\]
\end{lemma}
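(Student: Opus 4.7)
The plan is to apply an ultrafilter dichotomy to the predicate ``$d(x_n,y_n) > u(|x_n|+|y_n|)$''. If this holds $\omega$-almost surely we are done with the first alternative, so assume its negation holds $\omega$-a.s. Since $x_\omega$ is a finite cone point, Remark \ref{374bcc} produces some $M>0$ with $|x_n|,|y_n|\leq M\lambda_n$ $\omega$-a.s., and therefore $d(x_n,y_n)\leq u(2M\lambda_n)$ $\omega$-a.s. The guiding idea is to place $w_n$ at an intermediate distance from $x_n$: large enough in absolute terms to exceed $u$ evaluated at a scale of order $\lambda_n$ (thereby $u$-separating $w_n$ from both $x_n$ and $y_n$ at infinity), yet of order $o(\lambda_n)$ (so that $\{w_n\}$ continues to represent $x_\omega$). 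The degenerate sub-case $|x_n|/\lambda_n\to 0$ (equivalently $x_\omega=q_\omega$) is handled directly by taking $w_n:=q$, since then the two separation inequalities reduce to $|x_n|>u(|x_n|)$ and $|y_n|>u(|y_n|)$, which hold $\omega$-a.s.\ by sublinearity of $u$ together with $|x_n|,|y_n|\to\infty$.

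Assume now that $c:=\lim_\omega|x_n|/\lambda_n>0$. I apply asymptotic chainedness at $q$ to the pair $(q,x_n)$, obtaining a chain $q=a_n^{(0)},a_n^{(1)},\dots,a_n^{(K_n+1)}=x_n$ whose consecutive distances are at most $v_q(|x_n|)\leq v_q(M\lambda_n)$, which is $o(\lambda_n)$. The map $i\mapsto d(a_n^{(i)},x_n)$ starts at $|x_n|$, ends at $0$, and two consecutive values differ by at most $v_q(M\lambda_n)$ (up to a fixed quasimetric constant); hence a discrete intermediate-value argument produces, for any preassigned $\delta_n\lambda_n\in(0,|x_n|)$, an index $i_n$ with
$$\delta_n\lambda_n \;\leq\; d(a_n^{(i_n)},x_n) \;\leq\; \delta_n\lambda_n + v_q(M\lambda_n).$$
I would choose $\delta_n\to 0$ slowly enough that $\delta_n\lambda_n$ dominates both $v_q(M\lambda_n)$ and $2\,u((2M+1)\lambda_n)$; this is possible because $u$ and $v_q$ are sublinear. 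Setting $w_n:=a_n^{(i_n)}$ then gives $|w_n|\leq(M+1)\lambda_n$ and $d(x_n,w_n)=o(\lambda_n)$, so $\{w_n\}$ represents the same cone point as $\{x_n\}$.

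The required separations follow from the rate chosen. By construction $d(x_n,w_n)\geq\delta_n\lambda_n>u((2M+1)\lambda_n)\geq u(|x_n|+|w_n|)$, giving $x_n,w_n>_\infty u$. For the other pair, the (quasi-)triangle inequality yields $d(w_n,y_n)\geq d(w_n,x_n)-d(x_n,y_n)\geq\delta_n\lambda_n-u(2M\lambda_n)$, which by the choice of $\delta_n$ still exceeds $u((2M+1)\lambda_n)\geq u(|w_n|+|y_n|)$, giving $y_n,w_n>_\infty u$. The main technical obstacle is exactly the simultaneous balancing of $\delta_n$: it must vanish fast enough to keep $\{w_n\}$ in the equivalence class of $\{x_n\}$, yet slowly enough to dominate two sublinear gauges evaluated at scale $\lambda_n$, while a fixed quasimetric triangle constant is absorbed without disturbing these estimates. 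Once the rate is fixed, everything else is routine bookkeeping along the chain from $q$ to $x_n$.
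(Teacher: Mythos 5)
Your proof is correct and follows essentially the same strategy as the paper's: both exploit the asymptotic chain from the origin together with a discrete intermediate-value argument to place $w_n$ at a distance from $x_n$ (the paper works with the chain to $y_n$ and phrases this as the chain hitting an annulus of thickness $v_q$) that is simultaneously $o(\lambda_n)$ and dominates the gauges $u$ and $v_q$ at scale $\lambda_n$. The one caveat is that in a genuine $K$-quasimetric the consecutive jumps of $i\mapsto d(a_n^{(i)},x_n)$ are controlled only up to a multiplicative factor on the current value, so your additive window $[\delta_n\lambda_n,\,\delta_n\lambda_n+v_q(M\lambda_n)]$ should be relaxed to something like $[\delta_n\lambda_n,\,K^2(\delta_n\lambda_n+v_q(M\lambda_n))]$ --- which still suffices, since all you need from the upper bound is that it is $o(\lambda_n)$.
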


\begin{proof}
By assumption, for any \(C > 0\), \(|x_{n}| \geq C\) and
\(|y_{n}| \geq C\), \(\omega\)-almost surely. In other words, we can
assume \(|x_{n}|\) and \(|y_{n}|\) to be arbitrary large.

Suppose there exists an admissible gauge \(u\), s.t.
\(d(x_n,y_n) \leq u(|x_{n}| + |y_{n}|)\), for \(\omega\) almost every
\(n\). For \(\omega\)-almost every \(n\) we will construct two annuli,
one around \(y_{n}\) and another around \(x_{n}\) that grow sublinearly
w.r.t. \(\lambda_{n}\) and have nonempty intersection.

By assumption, there exists an admissible gauge \(v_{q}\) such that
\(X\) is asymptotically chained w.r.t. \(v_{q}\).

Define two annuli
\[A_{n} = \{ x \in X \,|\,  2u(4|y_n|) < d(x,y_{n}) \leq  2u(4|y_n|) + v_{q}(|y_n|) \}\]
and
\[A_{n}' = \{ x \in X \, | \, u(4|y_{n}|)< d(x_n,x) \leq 3\, u(4 |y_n|) + v_{q}(|y_n|) \} .\]
Clearly, both annuli grow sublinearly w.r.t \(\lambda_{n}\). We claim
that for \(\omega\)-almost every \(n\) \[
A_{n} \cap A_{n}' \neq \emptyset.
\] Any sequence constructed by picking a point in \(A_{n} \cap A_{n}'\)
for \(\omega\)-almost every \(n\), represents the same point
\(x_{\omega}\) in the asymptotic cone as \(\{ x_{n} \}\) and
\(\{ y_{n} \}\). This uses the fast that \(x_{\omega}\) is a finite
point and Remark \ref{374bcc}.

We first show that \(A_{n} \subset A_{n}'\) for \(\omega\)-almost every
\(n\). Assume that \(A_{n}\) is nonempty. Since we assume
\(d(x_n,y_n) \leq u(|x_{n}| + |y_{n}|)\), \(\omega-a.s.\), we have for
\(\omega\)-almost every \(n\), \[\begin{split}
    2d(x_n,y_n) &\leq 2u(|x_n| + |y_n|) \\
    &\leq |x_n| + |y_n| \\
    &\leq 2|y_n| + d(x_n,y_n). \\
\end{split}\] Thus \[
d(x_{n},y_{n}) \leq 2|y_{n}| \qquad\omega\text{-a.s.}.
\] With \(d(x_n,y_n) \leq u(|x_{n}| + |y_{n}|)\), \(\omega-a.s.\), it
follows,

\begin{equation}
\label{d76a9d}
d(x_n,y_n) \leq u(4|y_n|) \qquad\omega\text{-a.s.}.\end{equation}

For any \(x \in A_{n}\)
\[2u(4 |y_n|)) - d(x_{n},y_{n}) < d(x_n,x) \leq 2 u(4 |y_n|) + d(x_{n},y_{n})+ v_{q}(|y_n|).\]
Therefore thanks to inequality \ref{d76a9d}, \(x \in A_{n}'\).

We claim that \(A_{n}\) is indeed nonempty. \(A_{n}\) is an annulus of
inner radius \(2u(4|y_{n}|)\) and thickness \(v_{q}(|y_{n}|)\). Notice
that for \(\omega\)-almost every \(n\), the origin \(q\) lies beyond the
outer radius of \(A_{n}\), this is because
\(2\,u(4|y_n|) + v_{q}(|y_{n}|) < |y_n|\) \(\omega\)-almost surely.
Hence, any discrete path from \(q\) to \(y_{n}\) passes through
\(A_{n}\). The condition that \(X\) is asymptotically chained w.r.t.
\(v_{q}\), assures that one of the points in the chain
\(x^n_1 = q, \dots, x^n_{k+1} = y_n\) lies in \(A_{n}\). Indeed, since
\[\text{max}_{\, i \in 1,\dots k}\{d(x^n_i,x^n_{i+1})\} < v_{q}(|y_n|),\]
this must be the case for at least one point \(x^n_{i_n}\). Define
\(w_n = x^n_{i_n}\).

It remains to show that \(x_{n}, w_{n} > u\) and \(y_{n},w_{n} > u\) for
\(\omega\)-almost every \(n\). We only show the case of \(\{ x_{n} \}\),
the case for \(\{ y_{n} \}\) follows completely analogously.

Using that \(w_{n} \in A_{n}\) and equation \ref{d76a9d} gives
\[\begin{split}
    u(|x_n| + |w_n|) &\leq u(|x_n| + |y_n| + d(y_n,w_n)) \\
    &\leq u(2|y_n| + d(x_n,y_n) + 2\, u(4|y_n|) + v_{q}(|y_n|)) \\
    &\leq u(2|y_n| + 3\, u(4|y_n|) + v_{q}(|y_n|)) \\
\end{split}\] By sublinearity, for \(\omega\)-almost every \(n\),
\[3\, u(4|y_n|) + v_{q}(|y_n|) \leq 2|y_n|.\] Thus \[
u(|x_n| + |w_n|) \leq u(4|y_n|) < d(x_n,w_n),
\] where the last inequality follows from the fact that
\(w_{n} \in A_{n}'\).

\end{proof}

\subsection{AM-Theorem}
\label{sec:amtheorem}

In this section, we ultimately prove Theorem \ref{85f2fe}. Since tangent
cones and asymptotic cones are Möbius equivalent, it suffices to treat
the case in which we are given a regular AM-map at infinity, \[
f_{u,v} \colon (X,q,*_{X}) \to (Y,q',*_{Y}).
\]

\begin{lemma}
If any regular AM-map \(f_{u,v}: (X,q,*_{X}) \to (Y,q',*_{Y})\) induces
a continuous, injective, quasimöbius map
\[g: \operatorname{Cone}^{\omega}_{\lambda_{n}}(X,q) \to \operatorname{Cone}^{\omega}_{\lambda_{n}'}(Y,q'),\]
then Theorem \ref{85f2fe} follows.
\end{lemma}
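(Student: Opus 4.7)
The plan is to deduce all four cases of Theorem \ref{85f2fe} from its fourth case by conjugating $f$ on each side with an appropriate Cayley transform. Recall that the Cayley transforms $\mathcal{C}_p$ and $\mathcal{C}^q$ (Definitions \ref{7c701f} and \ref{226dfd}) are Möbius equivalences (Remark \ref{47aba5}), so they preserve cross-ratios exactly and are homeomorphisms for the extended metric topology (Remark \ref{589a02}). This means every condition entering the statement of the AM-theorem is transparent under them.

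Given a regular AM-map $f \colon (X,d,q,p) \to (Y,d',q',p')$ at $p$, let $\tilde{f}$ denote the same underlying map regarded as
$\tilde{f} \colon (X,d_p,q,*_X) \to (Y,d'_{p'},q',*_Y)$,
where we apply $\mathcal{C}_p$ on the source when $p$ is finite and do nothing when $p=*_X$, and similarly on the target. I claim that $\tilde{f}$ is a regular AM-map at infinity with the same gauge $u$ and control function $\eta$, and that $(X,d_p,q,*_X)$ is asymptotically chained at infinity with the same $v_q$. This is a matter of unwinding definitions: the separation condition of Definition \ref{f87e84} and the definition of asymptotically chained are both stated directly in terms of $d_p$ (with the convention $d_p=d$ when $p=*_X$), so both transfer verbatim; the cross-ratio inequality transfers because Cayley transforms preserve cross-ratios; and the two regularity conditions (continuity at the antipodal point and the preimage-of-neighborhood condition) transfer because Cayley transforms are homeomorphisms in the extended metric topology.

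Now apply the hypothesis of the lemma to $\tilde{f}$ to obtain a continuous, injective, quasimöbius map
$\tilde{g} \colon \operatorname{Cone}^{\omega}_{\lambda_n}(X,d_p,q) \to \operatorname{Cone}^{\omega}_{\lambda'_n}(Y,d'_{p'},q')$.
By Proposition \ref{abfe05}, when $p$ is finite, $\operatorname{Cone}^{\omega}_{\lambda_n}(X,d_p,q)$ is Möbius equivalent to the tangent cone $\operatorname{Tan}^{\omega}_{\lambda_n}(X,d,p)$, while when $p=*_X$ it is literally $\operatorname{Cone}^{\omega}_{\lambda_n}(X,d,q)$; the symmetric statement on the target uses either Proposition \ref{abfe05} (when $p'$ is finite) or nothing (when $p'=*_Y$). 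Pre-composing and post-composing $\tilde{g}$ with these Möbius equivalences produces, in each of the four cases 1--4 of Theorem \ref{85f2fe}, a map $g$ of the claimed form between the right pair of tangent/asymptotic cones. Continuity, injectivity, and the quasimöbius property survive this composition because Möbius equivalences are homeomorphisms that preserve cross-ratios exactly.

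The only real work is the bookkeeping in the second paragraph, and this is not difficult: the definitions of separation at $p$, asymptotically chained at $p$, and regularity at $p$ were deliberately formulated in terms of $d_p$ precisely so that they convert into the corresponding conditions at infinity for $(X,d_p)$ without loss. Once that observation is in place, the reduction is a clean conjugation and all the analytic content of the AM-theorem is concentrated in the fourth case.
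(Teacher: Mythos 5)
Your proposal is correct and follows essentially the same route as the paper: reduce to the at-infinity case by (pre/post)composing with Cayley transforms, which are Möbius equivalences and regular AM-maps (Example \ref{a67dd2}), and then transfer the induced map on cones back to the appropriate tangent/asymptotic cones via the Möbius equivalences of Propositions \ref{abfe05} and \ref{b5ff73}. Your explicit verification that the gauge, separation, chaining, cross-ratio, and regularity conditions transfer verbatim under the Cayley transform is exactly the bookkeeping the paper leaves implicit.
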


\begin{proof}
Assume we are given a general regular AM-map
\(f_{u,v}: (X,q,p) \to (Y,q',p')\). With the help of Example
\ref{a67dd2} we obtain a regular AM-map
\(F_{u,v}: (X,q,*_{X}) \to (Y,q',*_{Y})\), that is at \(*_{X}\), by
possibly postcomposing or precomposing by the Cayley transform or
inverse metric Cayley transform respectively.

By assumption, \(F_{u,v}\) induces a map
\(G: \operatorname{Cone}^{\omega}_{\lambda_{n}}(X,q) \to \operatorname{Cone}^{\omega}_{\lambda_{n}'}(Y,q')\).
The final statement follows from the fact that
\(\operatorname{Tan}^{\omega}_{\lambda_{n}}(X,p)\) and
\(\operatorname{Cone}^{\omega}_{\lambda_{n}}(X,q)\) as well as
\(\operatorname{Cone}^{\omega}_{\lambda_{n}'}(Y,q')\) and
\(\operatorname{Tan}^{\omega}_{\lambda_{n}'}(Y,p')\) are Möbius
equivalent by Proposition \ref{abfe05} and Proposition \ref{b5ff73}.

\end{proof}

\subsubsection{Proof the AM Theorem}
\label{sec:proof_the_am_theorem}

We are given a regular AM-map \(f_{u,v}: (X,q,*_{X}) \to (Y,q',*_{Y})\).
Since \(f_{u,v}\) is continuous at \(*_{X}\) we have for every sequence
\(x_{n} \rightarrow *_{X}\), \(x_{n} \in X_{*_{X}}\), that
\(x_{n}' \rightarrow *_{Y}\). Recall that \(*_{X}\) is an accumulation
point of \(X\). In what follows, we often abbreviate the notation
\(f(x)\) to \(x'\).

Take any sequence \(z_{n} \rightarrow *_{X}\), \(z_{n} \in X_{*_{X}}\).
Define \[\lambda_n = d(q,z_n)\] and \[\lambda_n' = d(q',z_n').\]

By construction and the continuity of \(f\) at \(*_{X}\), the sequences
\(\lambda_{n}\) and \(\lambda_n'\) diverge. We want to show that \(f\)
induces a continuous, injective, quasimöbius map from
\(\operatorname{Cone}^{\omega}_{\lambda_{n}}(X, q)\) to
\(\operatorname{Cone}^{\omega}_{\lambda_{n}'}(Y, q')\).

By construction, the sequence \(\{ z_{n} \}\) induces a point
\(z_{\omega} \in \operatorname{Cone}^{\omega}_{\lambda_{n}}(X, q)\) that
is different from its origin \(q_{\omega}\). In fact,
\(D_{d}(q_{\omega},z_{\omega}) = 1\).

Of all cases, the third case in Theorem \ref{85f2fe} is probably the
easiest to prove. In this case, the quasimöbius condition reduces to the
quasisymmetric condition
\[\frac{d(x',y')}{d(x',z')} \leq \eta\left(\frac{d(x,y)}{d(x,z)}\right).\]

Indeed, assume we have three sequences
\(\{ x_{n} \}, \{ y_{n} \}, \{ z_{n} \} \subset X_{*_{X}}\) that are
pairwise separated by \(u\) at infinity. Then also
\(\{ x_{n} \}, \{ y_{n} \}, \{ z_{n} \}, \{ *_{X} \}\) are pairwise
separated by \(u\) at infinity. Therefore
\[[x_{n}',y_{n}',z_{n}',*_{Y}] \leq \eta([x_{n},y_{n},z_{n},*_{X}]),\]
which by definition of the cross ratio, is actually the three-point
condition
\[\frac{d(x_{n}',y_{n}')}{d(x_{n}',z_{n}')} \leq \eta\left(\frac{d(x_{n},y_{n})}{d(x_{n},z_{n})}\right).\]

\begin{lemma}
\label{1a63dd}
The map \(f_{u,v}\) induces a map \(X^{\omega}\to Y^{\omega}\) between
ultrapowers.

\begin{enumerate}
\def\labelenumi{\arabic{enumi}.}
\tightlist
\item
  If \(\{ x_{n} \}_{\omega} \in X^{\omega}\) represents a finite point
  in \(\operatorname{Cone}^{\omega}_{\lambda_{n}}(X,q)\), then its image
  \(\{ x_{n}' \}_{\omega} \in Y^{\omega}\) represents a finite point in
  \(\operatorname{Cone}^{\omega}_{\lambda_{n}'}(Y,q')\).
\item
  If \(\{ x_{n} \}_{\omega} \in X^{\omega}\) represents the point at
  infinity in \(\operatorname{Cone}^{\omega}_{\lambda_{n}}(X,q)\), then
  its image \(\{ x_{n}' \}_{\omega} \in Y^{\omega}\) represents the
  point at infinity in
  \(\operatorname{Cone}^{\omega}_{\lambda_{n}'}(Y,q')\).
\end{enumerate}
\end{lemma}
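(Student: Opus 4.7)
The plan is to apply the AM inequality to cross ratios involving $*_X$, specifically $[z_n,x_n,*_X,q]$ and $[x_n,z_n,*_X,q]$. After cancellation of factors containing $*_X$ these simplify to $d(q,x_n)/\lambda_n$ and $\lambda_n/d(q,x_n)$ respectively, which are exactly the quantities whose $\omega$-limits distinguish finite cone points from the point at infinity. Two preliminary facts will be used throughout: regularity condition (2) of $f$ translates, in the extended-metric topology, into a uniform control $d(q,x)\leq R \Rightarrow d(q',f(x))\leq R'$, while the sublinearity of $u$ furnishes some $r_0$ such that $d(q,x)>r_0$ already implies $u$-separation of $(q,x)$ at infinity; in particular $(q,z_n)$ is $u$-separated $\omega$-almost surely.

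For claim~2, $\{x_n\}_\omega$ and $\{z_n\}_\omega$ are necessarily distinct cone points (the former at infinity, the latter at $D$-distance $1$ from $q_\omega$), so Lemma \ref{e8bbb4} yields $u$-separation of $(x_n,z_n)$; combined with the preliminary facts the full quadruple $(x_n,z_n,*_X,q)$ is pairwise $u$-separated $\omega$-a.s. The AM inequality then reads
\[\frac{\lambda_n'}{d(q',x_n')} \leq \eta\!\left(\frac{\lambda_n}{d(q,x_n)}\right),\]
whose right-hand side tends to $\eta(0)=0$, hence $d(q',x_n')/\lambda_n'\to\infty$.

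For claim~1 I split by the dichotomy of Remark \ref{5ff9c9}. If $\lim_\omega d(q,x_n)<\infty$, the regularity preliminary keeps $\{x_n'\}$ $\omega$-a.s.\ bounded, so $d(q',x_n')/\lambda_n'\to 0$ and $\{x_n'\}_\omega$ represents $q'_\omega$. Otherwise $d(q,x_n)\to\infty$, so $(q,x_n)$ is $u$-separated; writing $L=\lim_\omega d(q,x_n)/\lambda_n$, if $\{x_n\}_\omega\neq\{z_n\}_\omega$ in the cone then Lemma \ref{e8bbb4} supplies the last missing separation and AM applied to $(z_n,x_n,*_X,q)$ yields $d(q',x_n')/\lambda_n'\leq \eta(L+1)$. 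If instead $\{x_n\}_\omega=\{z_n\}_\omega$ (so $L=1$), I invoke Lemma \ref{6199f1} to produce a sequence $\{w_n\}$ representing the same cone point but $u$-separated from both $x_n$ and $z_n$, and then apply AM twice---once to $(z_n,w_n,*_X,q)$ bounding $d(q',w_n')/\lambda_n'\leq \eta(2)$, once to $(w_n,x_n,*_X,q)$ bounding $d(q',x_n')/d(q',w_n')\leq \eta(2)$. The product shows $\{x_n'\}_\omega$ is a finite point.

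The main difficulty is not any single estimate but the bookkeeping of the AM separation hypothesis: it fails exactly when $x_n$ hugs $q$ (resolved by the regularity preliminary) or collapses onto $z_n$ in the cone (resolved by the asymptotic-chain Lemma \ref{6199f1}). Everything else is a mechanical unfolding of cross ratios involving $*_X$ and an application of the continuity of $\eta$ at $0$.
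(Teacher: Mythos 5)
Your proof is correct and takes essentially the same route as the paper: adjoin $*_{X}$ and $q$ to form admissible quadruples so the AM condition becomes the three-point (quasisymmetric) inequality, use Lemma \ref{e8bbb4} and the sublinearity of $u$ for the needed separations, regularity for the case of bounded $d(q,x_n)$, and Lemma \ref{6199f1} with an intermediate sequence $\{w_n\}$ and two applications of $\eta$ when $x_\omega=z_\omega$. One small caveat: Lemma \ref{6199f1} yields a dichotomy rather than always producing $\{w_n\}$, but in its first alternative $(x_n,z_n)$ are already $u$-separated and your $\eta(L+1)$ estimate (with $L=1$) applies verbatim, so nothing is lost.
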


\begin{proof}
1.1. If \(\{ x_{n} \}_{\omega}\) represents a finite point \(x_\omega\)
that is different from the origin \(q_{\omega}\) and \(z_{\omega}\),
then by lemma \ref{e8bbb4}, \(\{x_n\}\),\(\{z_n\}\),\(\{q\}\) are
\(\omega\)-a.s. pairwise separated by \(u\).

We may henceforth apply the quasisymmetric condition to this triple,
which yields
\[\frac{d(q',x_n')}{\lambda_n'} = \frac{d(q',x_n')}{d(q',z_n')} < \eta\left(\frac{d(q,x_n)}{d(q,z_n)}\right) = \eta\left(\frac{d(q,x_n)}{\lambda_n}\right)\quad \omega-a.s.\]
Thus \(\frac{d(q',x_n')}{\lambda_n'}\) is bounded and
\(\{ x_{n}' \}_{\omega}\) represents a finite point in
\(\operatorname{Cone}^{\omega}_{\lambda_{n}'}(Y,q')\).

1.2. If \(\{ x_{n} \}_{\omega}\) represents the origin \(q_{\omega}\),
then either \(\lim_{ \omega }d(q,x_{n}) = M < \infty\) or
\(\lim_{ \omega }d(q,x_{n}) = \infty\).

1.2.1. In the first case, \(\{ x_{n} \}\) stays \(\omega\)-a.s. outside
a neighborhood \(V\) of \(*_{X}\), thus \(\{ x_{n}' \}\) stays
\(\omega\)-a.s. outside a neighborhood \(U\) of \(p'\) by the regularity
of \(f_{u,v}\). Hence \(\lim_{ \omega  }d(q',x_{n}') = M' < \infty\) and
\(\{ x_{n}' \}_{\omega}\) represents the origin \(q_{\omega}'\) of
\(\operatorname{Cone}^{\omega}_{\lambda_{n}'}(Y,q')\).

1.2.2. In the second case, \(\{ x_{n} \}\) and \(\{ q \}\) are separated
by \(u\), \(\omega\)-a.s. (see Example \ref{a516c1}). By Lemma
\ref{e8bbb4} also \(\{ x_{n} \}, \{ z_{n} \}, \{ q \}\) are
\(\omega\)-a.s. pairwise separated by \(u\). We may henceforth apply the
quasisymmetric condition to this triple, which yields
\[\frac{d(q',x_n')}{\lambda_n'} = \frac{d(q',x_n')}{d(q',z_n')} < \eta\left(\frac{d(q,x_n)}{d(q,z_n)}\right) = \eta\left(\frac{d(q,x_n)}{\lambda_n}\right)\quad \omega-a.s.\]
Thus \(\lim_{ \omega  }\frac{d(q',x_n')}{\lambda_n'} = 0\) and
\(\{ x_{n}' \}_{\omega}\) represents again the origin \(q_{\omega}'\) of
\(\operatorname{Cone}^{\omega}_{\lambda_{n}'}(Y,q')\).

1.3. If \(\{ x_{n} \}_{\omega}\) represents the point \(z_\omega\), then
by Lemma \ref{6199f1} either \(\{ x_{n} \}, \{ z_{n} \}\) are separated
by \(u\) or there exists a sequence \(\{w_n\}\) representing
\(z_{\omega}\), s.t. \(\{w_n\}, \{x_n\}\) are separated by \(u\) and
\(\{w_n\}\), \(\{z_n\}\) are separated by \(u\).

1.3.1. If \(\{ x_{n} \}, \{ z_{n} \}\) are separated by \(u\), then
\(\{ x_{n} \}, \{ z_{n} \}, \{ q \}\) are \(\omega\)-a.s. pairwise
separated by \(u\) (see Example \ref{a516c1}) and we are back to case 1.

1.3.2. Otherwise, \(\{w_n\}, \{x_n\}, \{q\}\) are
\(\{w_n\}, \{z_n\}, \{q\}\) are pairwise separated by \(u\). We may
henceforth apply the quasisymmetric condition to each of the two triples
separately, which yields

\[\begin{split}
    \frac{d(q',x_n')}{\lambda_n'} &= \frac{d(q',x_n')}{d(q',z_n')} \\
    &= \frac{d(q',x_n')}{d(q',w_n')} \frac{d(q',w_n')}{d(q',z_n')} \\
    &<\eta\left(\frac{d(q,x_n)}{d(q,w_n)}\right)\eta\left(\frac{d(q,w_n)}{d(q,z_n)}\right) \\
    &= \eta\left(\frac{d(q,x_n)}{\lambda_n}\frac{\lambda_n}{d(q,w_n)}\right)\eta\left(\frac{d(q,w_n)}{\lambda_n}\right) \\
    &\rightarrow \eta(1)^2
\end{split}\]

Thus \(\frac{d(q',x_n')}{\lambda_n'}\) is bounded and
\(\{ x_{n}' \}_{\omega}\) represents a finite point in
\(\operatorname{Cone}^{\omega}_{\lambda_{n}'}(Y,q')\).

2.1. If \(\{ x_{n} \}_{\omega}\) represents the point at infinity
\(*_{X}\), then by lemma \ref{e8bbb4}, \(\{x_n\}\),\(\{z_n\}\),\(\{q\}\)
are \(\omega\)-a.s. pairwise separated by \(u\).

We may henceforth apply the quasisymmetric condition to this triple,
which yields
\[\frac{\lambda_n'}{d(q',x_n')} = \frac{d(q',z_n')}{d(q',x_n')} < \eta\left(\frac{d(q,z_n)}{d(q,x_n)}\right) = \eta\left(\frac{\lambda_n}{d(q,x_n)}\right)\quad \omega-a.s.\]
Since \(\lim_{ \omega  } \frac{\lambda_{n}}{d(q,x_{n})} = 0\), also
\(\lim_{ \omega }\frac{\lambda_n'}{d(q',x_n')} 0\), and thus
\(\{ x_{n}' \}_{\omega}\) represents the point at infinity in
\(\operatorname{Cone}^{\omega}_{\lambda_{n}'}(Y,q')\).

\end{proof}

\begin{lemma}
\label{390bb4}
The mapping
\(X^{\omega} \to Y^{\omega}, \{x_n\}_{\omega} \mapsto \{x_n'\}_{\omega}\)
between ultrapowers, induces a well-defined map from the asymptotic cone
\(\operatorname{Cone}^{\omega}_{\lambda_{n}}(X,q)\) to
\(\operatorname{Cone}_{\lambda_{n}'}^{\omega}(Y,q')\).
\end{lemma}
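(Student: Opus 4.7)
My plan is to verify that the assignment $\{x_n\}_\omega \mapsto \{x'_n\}_\omega$ respects the equivalence relation that defines the asymptotic cones: whenever $\{x_n\}_\omega$ and $\{y_n\}_\omega$ represent the same point of $\operatorname{Cone}^\omega_{\lambda_n}(X,q)$, I need $\{x'_n\}_\omega$ and $\{y'_n\}_\omega$ to represent the same point of $\operatorname{Cone}^\omega_{\lambda'_n}(Y,q')$. If the shared point is the point at infinity, part 2 of Lemma~\ref{1a63dd} already says both images are at infinity in the target. If the shared point is the origin $q_\omega$, sub-case 1.2 of the proof of Lemma~\ref{1a63dd} shows both images represent $q'_\omega$. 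The remaining case is $x_\omega = y_\omega$ finite and distinct from $q_\omega$, so that $D(q_\omega, x_\omega) \in (0,\infty)$ and in particular $\lim_\omega d(q, x_n) = \lim_\omega d(q, y_n) = \infty$.

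In this situation I apply Lemma~\ref{6199f1}: either $\{x_n\}, \{y_n\}$ are themselves $u$-separated $\omega$-a.s., or there is a sequence $\{w_n\}$ representing $x_\omega$ with $\{x_n\}, \{w_n\}$ and $\{y_n\}, \{w_n\}$ each $u$-separated. Taking $w_n = y_n$ in the first sub-case, I obtain such a $\{w_n\}$ in every case. I then produce a reference sequence $\{\tilde z_n\}$ whose class $\tilde z_\omega$ is a finite point of the cone distinct from $q_\omega$ and $x_\omega$. The asymptotically chained hypothesis provides this: a chain $q = x^n_0, \dots, x^n_{k+1} = z_n$ of step size $<v_q(\lambda_n)$ must contain a point $\tilde z_n$ with $d(q, \tilde z_n) \in [c\lambda_n - v_q(\lambda_n),\, c\lambda_n + v_q(\lambda_n)]$ for any fixed $c \in (0,1)$; sublinearity of $v_q$ then yields $D(q_\omega, \tilde z_\omega) = c$, and choosing $c \neq D(q_\omega, x_\omega)$ secures $\tilde z_\omega \neq x_\omega$.

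With these choices, the quadruple $(x_n, \tilde z_n, w_n, *_X)$ is pairwise $u$-separated $\omega$-a.s.: separation from $*_X$ is automatic by convention, the pairs among finite-cone-point sequences with distinct limits are handled by Lemma~\ref{e8bbb4}, and $(x_n, w_n)$ is separated by construction. Applying the AM condition with the cross-ratio convention at points at infinity gives
\[
\frac{d'(x'_n, w'_n)}{d'(\tilde z'_n, w'_n)} \;\leq\; \eta\!\left(\frac{d(x_n, w_n)}{d(\tilde z_n, w_n)}\right).
\]
The argument of $\eta$ tends to $0$ $\omega$-almost surely: $x_\omega = w_\omega$ forces $d(x_n, w_n)/\lambda_n \to 0$, while $\tilde z_\omega \neq w_\omega$ keeps $d(\tilde z_n, w_n)/\lambda_n$ bounded away from $0$. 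Continuity of $\eta$ with $\eta(0)=0$ sends the left-hand side to $0$, and Lemma~\ref{1a63dd} bounds $d'(\tilde z'_n, w'_n)/\lambda'_n$ above $\omega$-a.s. via the quasi-triangle inequality, so $d'(x'_n, w'_n)/\lambda'_n \to 0$. Running the same argument with $y$ in place of $x$ yields $d'(y'_n, w'_n)/\lambda'_n \to 0$, and the quasi-triangle inequality for $d'$ delivers $d'(x'_n, y'_n)/\lambda'_n \to 0$, as required.

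The main obstacle is engineering $\{\tilde z_n\}$ at the correct intermediate scale. One cannot simply reuse $\{z_n\}$: if $z_\omega = x_\omega$ happens, the denominator $d(z_n, w_n)/\lambda_n$ would also collapse to $0$ and the cross-ratio estimate would be inconclusive. It is precisely the asymptotically chained hypothesis on $X$ that supplies points at every prescribed relative scale $c \in (0,1)$ and lets this degenerate case be side-stepped.
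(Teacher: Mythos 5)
Your proof is correct and follows the paper's overall skeleton --- the same three-way case split (point at infinity, origin, finite non-origin point), with the first two cases discharged by Lemma~\ref{1a63dd} and the last one set up via Lemmas~\ref{6199f1} and~\ref{e8bbb4} --- but you handle the decisive case by a more elaborate route. Where you manufacture an auxiliary reference sequence $\{\tilde z_n\}$ at intermediate scale $c\lambda_n$ (via a second invocation of the asymptotically chained hypothesis and a discrete intermediate-value argument along the chain), the paper simply anchors the cross-ratio at the origin: the quadruple $(y_n, q, x_n, *_X)$ is pairwise $u$-separated (the constant sequence $\{q\}$ is separated from any sequence diverging to infinity, cf.\ Example~\ref{a516c1}), so the three-point condition gives $d(x_n',y_n')/d(x_n',q') \leq \eta\bigl(d(x_n,y_n)/d(x_n,q)\bigr)$, whose argument tends to $0$ because $d(x_n,y_n)/\lambda_n \to 0$ while $\lambda_n/d(x_n,q)$ stays bounded; multiplying by the bounded quantity $d(q',x_n')/\lambda_n'$ finishes. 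Your construction of $\tilde z_n$ is sound but unnecessary: the degeneracy you guard against (the denominator of the cross-ratio collapsing) cannot occur with $q$ as reference, since the case under consideration already assumes $x_\omega \neq q_\omega$. One cosmetic quibble: when you set $w_n = y_n$ in the first sub-case, the pair $(y_n, w_n)$ is not $u$-separated, so the ``same argument with $y$ in place of $x$'' cannot literally be run there --- but it is also not needed, since $d'(y_n', w_n') = 0$ identically.
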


\begin{proof}
1.1. Let \(\{ x_{n} \}_{\omega}\) and \(\{ y_{n} \}_{\omega}\) be two
sequences representing the point at infinity \(*_{X}\) in
\(\operatorname{Cone}_{\lambda_{n}}^{\omega}(X,q)\).

We need to show that then \(\{ x_{n}' \}_{\omega}\) and
\(\{ y_{n}' \}_{\omega}\) represent the same point in
\(\operatorname{Cone}^{\omega}_{\lambda_{n}}(Y,q')\). This follows
immediately from Lemma \ref{1a63dd}, since \(\{ x_{n}' \}_{\omega}\) and
\(\{ y_{n}' \}_{\omega}\) both represent the point at infinity in
\(\operatorname{Cone}^{\omega}_{\lambda_{n}}(Y,q')\).

1.2. Let \(\{x_n\}_{\omega}\) and \(\{ y_{n} \}_{\omega}\) be two
sequences representing the origin \(q_\omega\).

We need to show that then \(\{ x_{n}' \}_{\omega}\) and
\(\{ y_{n}' \}_{\omega}\) represent the same point in
\(\operatorname{Cone}^{\omega}_{\lambda_{n}}(Y,q')\). This follows again
from Lemma \ref{1a63dd}, since \(\{ x_{n}' \}_{\omega}\) and
\(\{ y_{n}' \}_{\omega}\) both represent the origin \(q_{\omega}'\) in
\(\operatorname{Cone}^{\omega}_{\lambda_{n}}(Y,q')\), as follows from
step 1.2 in the proof of Lemma \ref{1a63dd}.

1.3. Let \(\{x_n\}_{\omega}\) and \(\{y_n\}_{\omega}\) be two sequences
representing the same finite point
\(x_{\omega} \in \operatorname{Cone}^{\omega}_{\lambda_{n}}(X,q)\)
different from the origin \(q_{\omega}\).

We need to show that then \(\{ x_{n}' \}_{\omega}\) and
\(\{ y_{n}' \}_{\omega}\) represent the same point in
\(\operatorname{Cone}^{\omega}_{\lambda_{n}}(Y,q')\). By Lemma
\ref{6199f1}, either \(\{ x_{n} \}\) and \(\{ y_{n} \}\) are
\(\omega\)-a.s. separated by \(u\), or there exists a sequence
\(\{w_n\}\) representing the same point \(x_{\omega}\) s.t.
\(\{x_n\},\{w_n\} > u\) and \(\{y_n\},\{w_n\} > u\), \(\omega\)-almost
surely.

1.3.1. In the first case, \(\{x_n\},\{y_n\},\{q\}\) are \(\omega\)-a.s.
pairwise separated by \(u\), as of Lemma \ref{e8bbb4}. We may henceforth
apply the quasisymmetric condition to this triple, which yields
\(\omega\)-a.s. \[\begin{split}
    \frac{d(x_n',y_n')}{\lambda_n'} &= \frac{d(q',x_n')}{\lambda_n'} \frac{d(x_n',y_n')}{d(x_n',q')} \\
    &\leq \frac{d(q',x_n')}{\lambda_n'} \eta \left(\frac{d(x_n,y_n)}{d(x_n,q)}\right) \\
    &= \frac{d(q',x_n')}{\lambda_n'} \eta\left(\frac{d(x_n,y_n)}{\lambda_n} \frac{\lambda_n}{d(x_n,q)}\right).
\end{split}\]

We have that \(\lim_{ \omega  } \frac{\lambda_n}{d(x_n,q)} < \infty\)
since \(x_{\omega} \neq q_{\omega}\) and
\(\lim_{ \omega  }\frac{d(q',x_n')}{\lambda_n'} < \infty\) since
\(\{ x_{n}' \}_{\omega}\) represents a finite point by Lemma
\ref{1a63dd}. Thus
\(\lim_{ \omega } \frac{d(x_n',y_n')}{\lambda_n'} = 0\) and
\(\{ x_{n}' \}_{\omega}\) and \(\{ y_{n}' \}_{\omega}\) represent the
same point in \(\operatorname{Cone}^{\omega}_{\lambda_{n}}(Y,q')\).

1.3.2. In the second case, each triple \(\{x_n\},\{w_n\},\{q\}\) and
\(\{y_n\},\{w_n\},\{q\}\) is \(\omega\)-a.s. pairwise separated by
\(u\). We may henceforth apply the quasisymmetric condition to each of
the two triples separately, which yields \(\omega\)-a.s. \[\begin{split}
    \frac{d(x_n',y_n')}{\lambda_n'} &\leq \frac{d(x_n',w_n')}{\lambda_n'} + \frac{d(w_n',y_n')}{\lambda_n'} \\
    &\leq \frac{d(q',x_n')}{\lambda_n'} \frac{d(x_n',w_n')}{d(x_n',q')} + \frac{d(q',y_n')}{\lambda_n'} \frac{d(y_n',w_n')}{d(y_n',q')} \\
    &\leq \frac{d(q',x_n')}{\lambda_n'} \eta\left(\frac{d(x_n,w_n)}{d(x_n,q)}\right) + \frac{d(q',y_n')}{\lambda_n'} \eta\left(\frac{d(y_n,w_n)}{d(y_n,q)}\right) \\
    &\leq \frac{d(q',x_n')}{\lambda_n'} \eta\left(\frac{d(x_n,w_n)}{\lambda_n}\frac{\lambda_n}{d(x_n,q)}\right) + \frac{d(q',y_n')}{\lambda_n'} \eta\left(\frac{d(y_n,w_n)}{\lambda_n}\frac{\lambda_n}{d(y_n,q)}\right).
\end{split}\]

We again have that
\(\lim_{ \omega  } \frac{\lambda_n}{d(x_n,q)} < \infty\) and
\(\lim_{ \omega  } \frac{\lambda_n}{d(y_n,q)} < \infty\) since
\(x_{\omega} \neq q_{\omega}\) and \(y_{\omega} \neq q_{\omega}\). Also
\(\lim_{ \omega  }\frac{d(q',x_n')}{\lambda_n'} < \infty\) and
\(\lim_{ \omega  }\frac{d(q',x_n')}{\lambda_n'} < \infty\) since
\(\{ x_{n}' \}_{\omega}\) and \(\{ y_{n}' \}_{\omega}\) represent a
finite point by Lemma \ref{1a63dd}. Thus
\(\lim_{ \omega } \frac{d(x_n',y_n')}{\lambda_n'} = 0\) and
\(\{ x_{n}' \}_{\omega}\) and \(\{ y_{n}' \}_{\omega}\) represent the
same point in \(\operatorname{Cone}^{\omega}_{\lambda_{n}}(Y,q')\).

\end{proof}

\begin{proof}[Proof of Theorem \ref{85f2fe}]
By Lemma \ref{390bb4}, the AM-map \(f_{u,v}\) induces a well-defined map
\(g:\operatorname{Cone}^{\omega}_{\lambda_{n}}(X,q) \rightarrow \operatorname{Cone}^{\omega}_{\lambda_{n}'}(Y,q')\).
The map \(g\) is quasimöbius if and only if it is quasisymmetric by
Remark \ref{59a2c8}. We thus show that \(g\) is quasisymmetric and in
particular continuous and injective.

Let \(\{w_n\}, \{x_n\}, \{y_n\}\) be sequences representing three
distinct finite points \(w_\omega,x_\omega\) and \(y_\omega\) in
\(\operatorname{Cone}^{\omega}_{\lambda_{n}}(X,q)\). By lemma
\ref{e8bbb4}, \(\{w_n\}, \{x_n\}, \{y_n\}\) are \(\omega\)-a.s. pairwise
separated by \(u\). So
\[\frac{d(x_n',y_n')}{d(x_n',w_n')} < \eta\left(\frac{d(x_n,y_n)}{d(x_n,w_n)}\right),\]
equivalently
\[\frac{d(x_n',y_n')}{\lambda_n'}\frac{\lambda_n'}{d(x_n',w_n')} < \eta\left(\frac{d(x_n,y_n)}{\lambda_n}\frac{\lambda_n}{d(x_n,w_n)}\right).\]
Taking the \(\omega\) limit, we get
\[\frac{D_{d/\lambda_{n}'}(x_\omega',y_\omega')}{D_{d/\lambda_{n}'}(x_\omega',w_\omega')} \leq \eta\left(\frac{D_{d/\lambda_{n}}(x_\omega,y_\omega)}{D_{d/\lambda_{n}}(x_\omega,w_\omega)}\right).\]
So, if
\(g:\operatorname{Cone}^{\omega}_{\lambda_{n}}(X,q) \setminus \{ *_{\omega} \} \rightarrow \operatorname{Cone}^{\omega}_{\lambda_{n}'}(Y,q') \setminus \{ *_{\omega} \}\)
is not constant, then it is continuous and injective. Remember that by
construction, \(q_{\omega}\) and \(z_{\omega}\) are mapped to two
distinct points \(q'_{\omega}\) and \(z'_{\omega}\), so \(g\) is indeed
not constant. Also \(g\) extends continuously and injectively to
\(\operatorname{Cone}^{\omega}_{\lambda_{n}}(X,q)\), since
\(g(*_{\omega}) = *_{\omega}\), and for any
\(x_{\omega}^{j} \to *_{\omega}\), \(g(x_{\omega}^{j}) \to *_{\omega}\).

\end{proof}

\section{Examples}
\label{sec:examples}

\subsection{Sublinear-Lipschitz equivalences}
\label{sec:sublinearlipschitz_equivalences}

In \cite{Cornulier2019}, Y. Cornulier introduces
\emph{sublinear-Lipschitz maps}.

\begin{definition}
A map \(f: X \rightarrow Y\) between quasimetric spaces is a
\emph{sublinear-Lipschitz map} if there is an admissible gauge
\(u: \mathbb{R}_+ \rightarrow \mathbb{R}\) such that
\[d(f(x),f(y)) \leq Cd(x,y) + C'u(|x| + |y|), \quad \forall x,y \in X,\]
for some constants \(C,C' > 0\).

Two sublinear-Lipschitz maps \(f,f'\) are equivalent if there is an
admissible gauge \(v\) and a constant \(C{''} > 0\) such that
\[d(f(x),f'(x)) \leq C''v(|x|)\] for all \(x \in X\).
\end{definition}

\begin{remark}
\label{80a8bb}
Sublinear-Lipschitz maps between metric spaces form a category. Taking
asymptotic cones, we obtain a functor from the sublinear-Lipschitz
category to the Lipschitz category. The sublinear-Lipschitz category is
in a sense the maximal category with such a property.
\end{remark}

The isomorphisms in the sublinear-Lipschitz category are called
sublinear-Lipschitz equivalences or SBE maps.

\begin{proposition}
\label{7b5b3e}
Every SBE map \(f: X \rightarrow Y\) is an AM-map with linear \(\eta\).
\end{proposition}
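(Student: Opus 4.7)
The strategy is to exploit the two-sided control provided by an SBE to show that, for widely separated configurations, $f$ is quantitatively bi-Lipschitz on each pair of points, so the cross-ratio distortion is uniformly bounded by a constant. Via Example~\ref{a67dd2} and the Cayley transform, I may reduce to the case where both antipodal points are the points at infinity, i.e.\ it suffices to produce linear $\eta$ for a regular AM-map at infinity $f:(X,o,*_{X})\to(Y,o',*_{Y})$.

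The first step is to unpack the SBE structure. Let $g:Y\to X$ be a sublinear-Lipschitz quasi-inverse with $g\circ f$ equivalent to $\mathrm{id}_{X}$. Applying the sublinear-Lipschitz estimate for $g$ to the pair $f(x),f(y)$ and then the equivalence $d(x,g(f(x)))\leq C''v(|x|)$, together with the fact that $|f(x)|$ is itself controlled linearly in $|x|$ modulo an admissible gauge, yields admissible gauges and constants $C,C'\geq 1$ and $u$ such that
\[
\tfrac{1}{C}\,d(x,y)-C'u(|x|+|y|)\;\leq\;d(f(x),f(y))\;\leq\;C\,d(x,y)+C'u(|x|+|y|)
\]
for all $x,y\in X_{*_X}$. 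The possible small shift of basepoints $f(o)\neq o'$ is absorbed into the gauge $u$, so we may assume $f$ is basepoint preserving.

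The second step is to choose the separation scale. Fix $\varepsilon\in(0,1/C)$ and set $\tilde u:=(C'/\varepsilon)u$, still admissible. If $a,b\in X_{*_X}$ are separated by $\tilde u$ at infinity, then $C'u(|a|+|b|)<\varepsilon\,d(a,b)$, so the two-sided estimate collapses to a genuine bi-Lipschitz inequality on this pair:
\[
\bigl(\tfrac{1}{C}-\varepsilon\bigr)d(a,b)\;\leq\;d(f(a),f(b))\;\leq\;(C+\varepsilon)\,d(a,b).
\]
For any quadruple $x,y,z,w\in X_{*_X}$ that is pairwise $\tilde u$-separated at infinity, each of the six distances appearing in the cross ratio simultaneously obeys this inequality.

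The third step is to substitute into the cross ratio:
\[
[f(x),f(y),f(z),f(w)]\;=\;\frac{d(f(x),f(z))\,d(f(y),f(w))}{d(f(x),f(w))\,d(f(y),f(z))}\;\leq\;\frac{(C+\varepsilon)^{2}}{(1/C-\varepsilon)^{2}}\,[x,y,z,w],
\]
so $f$ is an AM-map at infinity with linear distortion $\eta(t)=Kt$, where $K=\bigl(C(C+\varepsilon)/(1-C\varepsilon)\bigr)^{2}$. The only real subtlety is the choice of gauge in step two: one must dominate the sublinear error $C'u$ by a fixed fraction of the (strictly larger) separation $\tilde u$, which is exactly what the definition of an admissible gauge permits. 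Everything else is bookkeeping, including the reduction via Cayley transforms to the infinity case and the absorption of basepoint mismatch into the gauge.
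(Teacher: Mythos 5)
Your argument is correct and takes essentially the same route as the paper: both establish the two-sided estimate $c'd(x,y)-C'u(|x|+|y|)\le d(f(x),f(y))\le cd(x,y)+Cu(|x|+|y|)$, fix a gauge that is a constant multiple of $u$ so that on separated pairs the sublinear error is dominated by a fixed fraction of the distance, conclude that $f$ is genuinely bi-Lipschitz on such pairs, and hence that cross-ratios are distorted by at most a multiplicative constant, i.e.\ linear $\eta$. The only cosmetic differences are that the paper asserts the two-sided bound directly rather than deriving it from a quasi-inverse, and your mention of ``regular'' is unnecessary since the proposition does not claim regularity.
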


\begin{proof}
If \(f\) is \(SBE\), then \(f\) is bi-Lipschitz except at scales below
an admissible gauge \(v\).

Indeed, an \(SBE\) map satisfies
\[c'd(x,y) - C'u(|x| + |y|) \leq d(f(x),f(y)) \leq cd(x,y) + Cu(|x| + |y|),\]
for some gauge \(u\).

Fix the gauge \(v = 2 \frac{C'}{c'}u\), and let \(x,y \in X\) s.t.
\(x,y > v\). Then \[\begin{split}
    d(f(x),f(y)) &\leq cd(x,y) + Cu(|x| + |y|) \\
    &\leq \big(c + \frac{Cc'}{2C'}\big)d(x,y),
\end{split}\] and \[\begin{split}
    d(f(x),f(y)) &\geq c'd(x,y) - C'u(|x| + |y|) \\
    &\geq \frac{c'}{2}d(x,y)
\end{split}\]

So there exists \(D > 0\), s.t. for all \(x,y > v\),
\[\frac{1}{D} d(x,y) \leq d(f(x),f(y)) \leq D d(x,y).\]

In particular, \(f\) is an AM-map with linear \(\eta\).

\end{proof}

\subsection{Assouad-type maps}
\label{sec:assouadtype_maps}

\begin{quote}
\cite{Semmes1996} How can one recognize when a metric space is
bi-Lipschitz equivalent to an Euclidean space?
\end{quote}

Simple as it sounds, this question is not obvious. If a metric space
admits a bi-Lipschitz embedding in \(\mathbb{R}^n\), then it is clearly
doubling. The converse is not true, however; the \(3\)-dimensional
Heisenberg group with the Carnot-Carathéodory metric is doubling, but
does not admit a bi-Lipschitz embedding in \(\mathbb{R}^n\) for any
\(n\).

Doubling in the context of a metric space \((X,d)\) means, that there
exists a constant \(D > 0\) such that for any \(x \in X\) and \(r >0\),
the ball \(B(x,r) = \{y \in X : d(x,y) < r\}\) can be covered by at most
\(D\) balls of radius \(\frac{r}{2}\).

Assouad's embedding theorem \cite{Assouad1983} \cite{Naor2012} states
that any snowflake \(X^\alpha = (X, d^\alpha)\), \(0 < \alpha < 1\), of
a doubling metric space admits a bi-Lipschitz embedding in a Euclidean
space.

It is clear that the Assouad embedding of a doubling metric space into
Euclidean space is an asymptotic-Möbius map. In what follows, we
construct an example of an Assouad mapping from an infinite dimensional
Heisenberg group into a Hilbert space. This construction follows from a
construction of Lee and Naor for the finite dimensional case
\cite{Lee2006}.

Let \(H_{\mathbb{C}}\) be an infinite dimensional complex Hilbert space.
\(H_{\mathbb{C}}\) carries the symplectic form
\(\Omega(a,b) = \text{Im}(\langle a,b\rangle)\).

The \emph{infinite-dimensional Heisenberg group} \(\mathcal{H}_\Omega\),
is the set of tuples \((a,t)\) with \(a \in H_{\mathbb{C}}\),
\(t \in \mathbb{R}\) and the group law
\[(a,t)(a',t') = (a + a', t + t' + 2\,\Omega(a,a')).\]

Let \(G\) be a group with identity element \(e\). A group seminorm on
\(G\) is a function \(G \rightarrow [0,\infty)\) satisfying
\(N(g^{-1}) = N(g)\) for all \(g \in G\), \(N(gh) \leq N(g) + N(h)\) for
all \(g,h \in G\) and \(N(e) = 0\). Moreover, if \(N(g) = 0\) if and
only if \(g = e\), then \(N\) is a group norm on \(G\).

\begin{remark}
\label{f68604}
If \(N_1\) and \(N_2\) are group seminorms, then
\(\sqrt{N_1^2 + N_2^2}\) is a group seminorm.
\end{remark}

The function \[N(a,t) = \sqrt{\sqrt{||a||^4 + t^2} + ||a||^2},\] is a
group norm on the infinite dimensional Heisenberg group
\(\mathcal{H}_\Omega\). In this case,
\(N_1 = (||a||^4 + t^2)^{\frac{1}{4}}\) is the Korányi norm.

Given \(N\), \[d_N((a,t),(a',t')) = N((a,t)(a',t')^{-1})\] is a right
invariant metric on \(\mathcal{H}_\Omega\). We want to show that
\(d_{N}\) is a kernel conditionally of negative type on
\(\mathcal{H}_\Omega\).

\begin{definition}
A continuous real valued kernel \(K\) on a topological space \(X\) is
\emph{conditionally of negative type}, if:

\begin{enumerate}
\def\labelenumi{\arabic{enumi}.}
\tightlist
\item
  \(K(x, x)=0\) for all \(x\) in \(X\);
\item
  \(K(y, x)=K(x, y)\) for all \(x, y\) in \(X\);
\item
  for any \(n\) in \(\mathbb{N}\), any elements \(x_1, \ldots, x_n\) in
  \(X\), and any real numbers \(c_1, \ldots, c_n\) with
  \(\sum_{i=1}^n c_i=0\), the following inequality holds:
  \[\sum_{i=1}^n \sum_{j=1}^n c_i c_j K\left(x_i, x_j\right) \leq 0\]
\end{enumerate}
\end{definition}

If \(d_N\) is conditionally of negative type, then by the GNS
construction there is a real Hilbert space \(H\) and an isometry
\(T:(\mathcal{H}_\Omega,\sqrt{d_N}) \rightarrow H\). In particular, the
mapping \(T:(\mathcal{H}_\Omega,d_N) \rightarrow H\) is an AM-map.

\begin{theorem}[GNS construction {\cite{Gelfand1943}\cite{Segal1947}}]
Let \(K\) be a continuous kernel on a topological space \(X\), that is
conditionally of negative type, and let \(x_0 \in X\). Then there exists
a real Hilbert space \(H\) and a continuous mapping
\(T: X \rightarrow H\) with the following properties:

\begin{enumerate}
\def\labelenumi{\arabic{enumi}.}
\tightlist
\item
  \(K(x, y)=\|T(x)-T(y)\|^2\) for all \(x\), \(y\) in \(X\);
\item
  the linear span of \(\left\{T(x)-T\left(x_0\right): x \in X\right\}\)
  is dense in \(H\).
\end{enumerate}

Moreover, the pair \((H, T)\) is unique, up to a canonical isomorphism.
That is, if \((H', T')\) is another pair satisfying 1. and 2., then
there exists a unique affine isometry \(L: H \rightarrow H'\) such that
\(T'(x)=L(T(x))\) for all \(x \in X\).
\end{theorem}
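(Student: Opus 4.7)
The plan is to build $H$ by the classical formal-span construction: take the real vector space $V_0$ of finitely supported functions $c:X\to\mathbb R$ with $\sum_{x}c(x)=0$, equip it with a symmetric bilinear form coming from $K$, and then quotient and complete to obtain the target Hilbert space. The map $T$ will be defined by sending $x\in X$ to (the class of) $\delta_x-\delta_{x_0}$ viewed inside the completion.

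Concretely, I would first define, for $c,c'\in V_0$, the pairing
\[
B(c,c') \;=\; -\tfrac{1}{2}\sum_{x,y\in X} c(x)\,c'(y)\,K(x,y),
\]
and check symmetry and bilinearity from properties (1) and (2) of $K$. The key step is to show that the negative-type condition (3) forces $B(c,c)\geq 0$ for every $c\in V_0$: this is exactly the condition $\sum_{i,j}c_i c_j K(x_i,x_j)\leq 0$ applied to the finite support of $c$, giving a genuine positive semidefinite form. Let $N=\{c\in V_0 : B(c,c)=0\}$; Cauchy--Schwarz for semidefinite forms shows $N$ is a subspace and $B$ descends to an inner product on $V_0/N$. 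Define $H$ to be the Hilbert space completion of $V_0/N$, and set
\[
T(x) \;=\; [\delta_x - \delta_{x_0}] \;\in\; V_0/N \subset H .
\]

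Property (1) is then a direct calculation: for any $x,y\in X$, $c=\delta_x-\delta_y$ lies in $V_0$, and expanding $B(c,c)$ using (1) and (2) of $K$ yields $B(c,c)=K(x,y)$; rewriting $\delta_x-\delta_y=(\delta_x-\delta_{x_0})-(\delta_y-\delta_{x_0})$ identifies this with $\|T(x)-T(y)\|_H^2$. Property (2) is immediate from the definition since the classes $\delta_x-\delta_{x_0}$ already span $V_0/N$, which is dense in $H$. Continuity of $T$ follows from property (1) and the continuity of $K$: if $x_n\to x$ in $X$, then $\|T(x_n)-T(x)\|^2=K(x_n,x)\to K(x,x)=0$.

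For the uniqueness statement, I would define an affine isometry $L:H\to H'$ by first specifying the underlying linear map on the dense subspace $\operatorname{span}\{T(x)-T(x_0):x\in X\}$ via
\[
L_0\!\left(\sum_i \lambda_i(T(x_i)-T(x_0))\right) \;=\; \sum_i \lambda_i(T'(x_i)-T'(x_0)),
\]
check it is well-defined and isometric (both inner products being determined by $K$ through $\|T(x)-T(y)\|^2=K(x,y)=\|T'(x)-T'(y)\|^2$ and the polarization identity, so the Gram matrices agree), extend by continuity to all of $H$, and then translate by $T'(x_0)$ to get $L$ with $L(T(x))=T'(x)$. Uniqueness of $L$ follows from density of the span in (2).

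The only genuinely delicate step is the well-definedness in the uniqueness part, i.e.\ that the isometric extension of $L_0$ does not depend on the representation of a vector as a finite linear combination; this reduces to showing that the Gram matrix $(\langle T(x_i)-T(x_0),T(x_j)-T(x_0)\rangle)_{i,j}$ is determined entirely by the values of $K$ on $\{x_0,x_1,\dots,x_n\}$, which is precisely the content of property (1) together with polarization. Everything else is bookkeeping with finite sums and the standard completion procedure.
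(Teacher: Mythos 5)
Your construction is correct and is exactly the classical Schoenberg/GNS argument (finitely supported mean-zero functions, the form $B(c,c')=-\tfrac12\sum c(x)c'(y)K(x,y)$, quotient by the null space, complete, $T(x)=[\delta_x-\delta_{x_0}]$, and uniqueness via polarization on the dense span); the paper does not prove this theorem but cites it from the literature, where this is the standard proof. No gaps: positivity of $B$ on $V_0$ is precisely the negative-type inequality, property (1) follows from $K(x,x)=0$ and symmetry, and the isometric extension in the uniqueness part is justified by the Gram-matrix computation $\langle T(x)-T(x_0),T(y)-T(x_0)\rangle=\tfrac12\bigl(K(x,x_0)+K(y,x_0)-K(x,y)\bigr)$ together with density from property (2).
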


\begin{definition}
A \emph{function of positive type} on a topological group \(G\) is a
continuous function \(\Phi: G \to \mathbb{C}\) such that the kernel on
\(G\) defined by
\[\left(g_1, g_2\right) \mapsto \Phi\left(g_2^{-1} g_1\right)\] is of
positive type, that is,
\[\sum_{i=1}^n \sum_{j=1}^n c_i \overline{c_j} \,\Phi\left(g_j^{-1} g_i\right) \geq 0\]
for all \(g_1, \ldots, g_n \in G\) and
\(c_1, \ldots, c_n \in \mathbb{C}\).
\end{definition}

\begin{remark}
\label{e872ce}
For some authors, kernels of positive type are
\emph{\href{https://en.wikipedia.org/wiki/Positive-definite_kernel}{positive
definite Hermitian kernels}}, as for example in
\cite[Chapter~8]{Benyamini2000}. We avoid this terminology, since the
reader may expect positive definiteness to include the additional
condition that
\(\sum_{i=1}^n \sum_{j=1}^n c_i \overline{c_j} \,K\left(x_i, x_j\right) = 0\)
if and only if \(c_1 = \cdots = c_n = 0\).
\end{remark}

\begin{remark}[{\cite[Proposition~C.1.6]{Bekka2008}}]
\label{d45467}
Let \(K_1\) and \(K_2\) be kernels of positive type on a topological
space \(X\). Then \[K_1 K_2:(x, y) \mapsto K_1(x, y) K_2(x, y)\] is a
kernel of positive type.
\end{remark}

\begin{lemma}
\label{368f85}
For any \(\lambda \in \mathbb{R}\), the function
\[\Phi_\lambda(a,t) = e^{-|\lambda| \|a\|^2 + i\lambda t}\] on
\(\mathcal{H}_\Omega\) is of positive type.
\end{lemma}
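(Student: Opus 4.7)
The plan is to reduce positivity of $\Phi_\lambda$ on $\mathcal{H}_\Omega$ to positivity of a kernel on the underlying Hilbert space $H_{\mathbb{C}}$, and then to split that kernel into two factors each visibly of positive type.

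First I would unwind the group law. Using $(a',t')^{-1}(a,t) = (a-a',\,t-t'-2\Omega(a',a))$ together with the antisymmetry of $\Omega$, a direct computation yields
\[
\Phi_\lambda\bigl((a_j,t_j)^{-1}(a_i,t_i)\bigr) \;=\; e^{-|\lambda|\,\|a_i-a_j\|^2}\cdot e^{i\lambda t_i}\,\overline{e^{i\lambda t_j}}\cdot e^{2i\lambda\,\Omega(a_i,a_j)}.
\]
The ``pure-$t$'' factor can be absorbed via the substitution $d_i := c_i e^{i\lambda t_i}$, reducing the positivity condition for $\Phi_\lambda$ (with arbitrary complex coefficients $c_i$) to the positivity of the kernel
\[
K(a,a') \;:=\; e^{-|\lambda|\,\|a-a'\|^2 + 2i\lambda\,\Omega(a,a')}
\]
on $H_{\mathbb{C}}$.

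Next I would factor $K$. Expanding $\|a-a'\|^2 = \|a\|^2 - 2\operatorname{Re}\langle a,a'\rangle + \|a'\|^2$ and using $\Omega(a,a') = \operatorname{Im}\langle a,a'\rangle$, the kernel takes the shape
\[
K(a,a') \;=\; e^{-|\lambda|\,\|a\|^2}\,\overline{e^{-|\lambda|\,\|a'\|^2}}\cdot \exp\!\bigl(2|\lambda|\operatorname{Re}\langle a,a'\rangle + 2i\lambda\operatorname{Im}\langle a,a'\rangle\bigr).
\]
The cross term in the second factor collapses to $2\lambda\langle a,a'\rangle$ when $\lambda\geq 0$ and to $2|\lambda|\,\overline{\langle a,a'\rangle}$ when $\lambda<0$. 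The first factor is of the rank-one form $f(a)\overline{f(a')}$ with $f(a)=e^{-|\lambda|\,\|a\|^2}$, hence trivially of positive type.

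For the remaining exponential factor I would invoke that $\langle a,a'\rangle$ is itself of positive type, since $\sum c_i\overline{c_j}\langle a_i,a_j\rangle = \bigl\|\sum c_i a_i\bigr\|^2 \geq 0$, and so is its complex conjugate. By the Schur product property (Remark \ref{d45467}) each integer power remains of positive type, so the absolutely convergent series $\exp(2\lambda\langle a,a'\rangle)$ (respectively $\exp(2|\lambda|\overline{\langle a,a'\rangle})$) is a sum of positive-type kernels with nonnegative coefficients, hence positive type. A final application of Remark \ref{d45467} combines the two factors to give positivity of $K$, and therefore of $\Phi_\lambda$. The only genuine bookkeeping I foresee is the case split on the sign of $\lambda$; infinite-dimensionality of $H_{\mathbb{C}}$ presents no obstacle, since positivity is tested on finite configurations and the series manipulations are term by term.
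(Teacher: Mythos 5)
Your proof is correct and follows essentially the same route as the paper: factor the kernel into a rank-one Gaussian-type piece and the exponential of $2\lambda\langle a,a'\rangle$ (or $2|\lambda|\overline{\langle a,a'\rangle}$, depending on the sign of $\lambda$), then use positivity of the inner-product kernel. The only differences are cosmetic: you absorb the $e^{i\lambda t_i}$ factors into the coefficients rather than into the rank-one factor, and you reprove the fact that the exponential of a positive-type kernel is positive type via the power series and the Schur product (Remark \ref{d45467}), where the paper simply cites this result.
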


\begin{proof}
Indeed, the function \(\Phi_\lambda\) induces the kernel
\[K((a,s),(b,t)) := \exp(-|\lambda|\|a-b\|^2 + i\lambda (s-t-2\,\Omega(a,b))).\]

The kernel can be rewritten as a product of exponentials
\[\exp(-|\lambda|(\|a\|^2 + \|b\|^2) + i\lambda(s-t))\exp(2|\lambda| (\operatorname{Re}(\langle a,b\rangle) - i\operatorname{sign}(\lambda)\, \Omega(a,b)) ).\]

By Remark \ref{d45467}, the product of kernels of positive type is a
kernel of positive type.

The first factor is of positive type, since for all
\(c_1,\dots,c_n \in \mathbb{C}\), \[
\begin{split}
\sum_{i}^{n}\sum_{j}^{n} &c_{i}\bar{c}_{j} \exp(-|\lambda|(\|a_{i}\|^2 + \|a_{j}\|^2) + i\lambda(t_{i}-t_{j})) \\
&= \sum_{i}^{n}\sum_{j}^{n} c_{i} \exp(-|\lambda|\|a_{i}\|^2 + i\lambda t_{i})  \overline{\left(c_{j} \exp(-|\lambda|\|a_{j}\|^2 + i\lambda t_{j})\right)} \\
&= \left|\sum_{i}^{n} c_{i} \exp(-|\lambda|\|a_{i}\|^2 + i\lambda t_{i})  \right|^{2} \geq 0.
\end{split}
\]

The exponential of a kernel of positive type is again a kernel of
positive type \cite[Proposition~8.2.]{Benyamini1999}. Thus the kernel
\(K\) is of positive type, if
\[\operatorname{Re}(\langle a,b \rangle) - i\operatorname{sign}(\lambda)\,\Omega(a,b)\]
is of positive type. The above equals the Hermitian form
\(\langle a,b \rangle\) if \(\lambda < 0\) and
\(\overline{\langle a,b \rangle}\) if \(\lambda \geq 0\) and thus
clearly it is a kernel of positive type.

\end{proof}

\begin{definition}
Let \(G\) be a topological group. A continuous function
\(\Psi: G \rightarrow \mathbb{R}\) is \emph{conditionally of negative
type} if the kernel \(K\) on \(G\), defined by
\(K(g, h)=\Psi\left(h^{-1} g\right)\) for \(g, h \in G\), is
conditionally of negative type.
\end{definition}

\begin{remark}[{\cite[Proposition~C.2.4]{Bekka2008}}]
\label{5498b7}
Let \(G\) be a topological group whose identity element we denote by
\(e\).

\begin{enumerate}
\def\labelenumi{\arabic{enumi}.}
\tightlist
\item
  Let \(\{ \Psi_t \}_t\) be a family of functions conditionally of
  negative type on \(G\) converging pointwise on \(G\) to a continuous
  function \(\Psi: G \rightarrow \mathbb{R}\). Then \(\Psi\) is a
  function conditionally of negative type.
\item
  Let \(\Phi\) be a real valued function of positive type on \(G\). Then
  \[g \mapsto \Psi(e) - \Psi(g)\] is a function conditionally of
  negative type.
\end{enumerate}
\end{remark}

\begin{proposition}
\label{49a80e}
The metric
\(d_N: \mathcal{H}_\Omega \times \mathcal{H}_\Omega \rightarrow \mathbb{R}\)
is conditionally of negative type.
\end{proposition}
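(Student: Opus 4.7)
The plan is to realize $N$ itself as a positive integral of elementary conditionally-negative-type functions built from the positive-type functions $\Phi_\lambda$ of Lemma \ref{368f85}, and then invoke the closure of conditional negative type under non-negative linear combinations and pointwise limits.

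The first step is to derive the integral representation
\[
N(a,t) \;=\; \frac{1}{\sqrt{2\pi}}\int_{0}^{\infty}\bigl(1 - e^{-\lambda\|a\|^{2}}\cos(\lambda t)\bigr)\,\lambda^{-3/2}\,d\lambda.
\]
This I would obtain from the classical identity $\int_{0}^{\infty}\lambda^{-3/2}(1-e^{-\lambda z})\,d\lambda = 2\sqrt{\pi z}$ (valid for $\operatorname{Re}(z)\ge 0$, $z\neq 0$, with the principal branch of the square root), applied to $z = \|a\|^{2}+it$ and $z=\|a\|^{2}-it$ and then averaged. Writing $\sqrt{\|a\|^{2}+it}=u+iv$ with $u>0$, the relations $u^{2}-v^{2}=\|a\|^{2}$ and $2uv=t$ give $2u^{2}=\|a\|^{2}+\sqrt{\|a\|^{4}+t^{2}}$, so that the right-hand side collapses to $\sqrt{2\pi}\,N(a,t)$.

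The second step is to show that for each fixed $\lambda>0$ the integrand $\Psi_\lambda(a,t):=1-e^{-\lambda\|a\|^{2}}\cos(\lambda t)$ is conditionally of negative type on $\mathcal{H}_{\Omega}$. By Lemma \ref{368f85}, $\Phi_\lambda$ is of positive type; conjugation preserves positive type, so $\overline{\Phi_\lambda}$ is too, and hence the average $\tfrac{1}{2}(\Phi_\lambda+\overline{\Phi_\lambda})=\operatorname{Re}(\Phi_\lambda)=e^{-\lambda\|a\|^{2}}\cos(\lambda t)$ is a real-valued function of positive type. Since $\operatorname{Re}(\Phi_\lambda)$ takes the value $1$ at the identity of $\mathcal{H}_\Omega$, Remark \ref{5498b7}(2) then gives that $\Psi_\lambda$ is conditionally of negative type.

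The final step is to pass from the integrand to the integral. Conditional negative type is preserved by non-negative linear combinations (immediate from the defining inequality) and by pointwise limits of continuous functions by Remark \ref{5498b7}(1); so I would approximate the integral by Riemann sums with positive weights $\lambda_k^{-3/2}\,\Delta\lambda_k$, producing a sequence of conditionally-negative-type functions converging pointwise to $\sqrt{2\pi}\,N$. The needed dominated-convergence bound is mild: Taylor expansion gives $\Psi_\lambda(a,t)=\|a\|^{2}\lambda+O(\lambda^{2})$ near $\lambda=0$, so the integrand is $O(\|a\|^{2}\lambda^{-1/2})$ there, while $|\Psi_\lambda|\le 2$ makes it $O(\lambda^{-3/2})$ at infinity. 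The only non-routine points will be handling the boundary cases $\|a\|=0$ or $t=0$ in the integral identity (where one chooses the correct branch and interprets $\sqrt{z}$ as a limit along the positive real axis), and verifying that the Riemann-sum approximants remain continuous so Remark \ref{5498b7}(1) applies; the main work is therefore bookkeeping rather than a genuine obstacle.
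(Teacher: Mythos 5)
Your proof is correct, but it takes a genuinely different route from the paper's. Both arguments rest on the same two ingredients --- Lemma \ref{368f85} (positive type of $\Phi_\lambda$) and the closure properties of Remark \ref{5498b7} --- but they produce $N$ from these building blocks differently. The paper integrates $\Phi_\lambda$ against a rotation-invariant $\tfrac12$-stable density $\varphi_\epsilon$ (with $\hat\varphi_\epsilon(t)=e^{-\epsilon\sqrt{|t|}}$) to obtain positive-type functions $F_\epsilon=h_{\|a\|^2}*\hat\varphi_\epsilon$, passes to $\tfrac{1-F_\epsilon}{\epsilon}$, and only identifies the $\epsilon\to 0$ limit as $N$ through a keyhole-contour residue computation. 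You instead apply the subordination identity $\sqrt z=\tfrac1{2\sqrt\pi}\int_0^\infty\lambda^{-3/2}\bigl(1-e^{-\lambda z}\bigr)\,d\lambda$ at $z=\|a\|^2\pm it$, which gives an exact integral representation of $N$ as a positive superposition of the elementary functions $1-\operatorname{Re}\Phi_\lambda$, each conditionally of negative type by Remark \ref{5498b7}(2) since $\operatorname{Re}\Phi_\lambda$ is real-valued, of positive type, and equals $1$ at the identity. Your constant and branch bookkeeping check out: $\operatorname{Re}\sqrt{\|a\|^2+it}=N(a,t)/\sqrt2$, so the prefactor $1/\sqrt{2\pi}$ is right, the identity extends to $\operatorname{Re}z\ge 0$, $z\ne 0$ by dominated convergence, and $(a,t)=(0,0)$ is trivial. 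For the last step, your Riemann-sum-plus-pointwise-limit argument works (after truncating to $[\delta,R]$ so the sums converge), but it can be streamlined: for a fixed finite configuration with $\sum_i c_i=0$ the defining inequality holds for every $\lambda$, and since the integrand is nonnegative with your bounds $O(\|a\|^2\lambda^{-1/2})$ near $0$ and $O(\lambda^{-3/2})$ at infinity, one may simply integrate the finitely many terms against the positive measure $\lambda^{-3/2}d\lambda$ and conclude directly, with no appeal to Remark \ref{5498b7}(1). In short, your route buys an elementary, residue-free proof in which the exact formula for $N$ is built in from the start, while the paper's route buys the probabilistic picture of stable subordination at the cost of the contour computation.
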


\begin{proof}
The existence of rotation-invariant \(\frac{1}{2}\)-stable distributions
implies, that for all \(\epsilon > 0\), there exists a non-negative
integrable function
\(\varphi_\epsilon: \mathbb{R} \rightarrow [0,\infty)\) with Fourier
transform \(\hat{\varphi}_\epsilon(t) = e^{-\epsilon\sqrt{|t|}}\). Note
that
\(\frac{1}{2\pi} \hat{\hat{\varphi}}_\epsilon(x) = \varphi_\epsilon(x)\).
For reference see for example
\cite[Proposition~2.5~(xii)~and~Theorem~14.14]{Sato1999}.

By Lemma \ref{368f85},
\[F_\epsilon(a,t) = \frac{1}{2\pi}\int_{\mathbb{R}} e^{-|\lambda| \|a\|^2 + i\lambda t} \varphi_\epsilon(\lambda)\,d\lambda\]
is a function of positive type on \(\mathcal{H}_\Omega\). Notice that it
is also real valued. We make use of the Cauchy probability density
\[h_k(x) = \frac{k}{\pi}\frac{1}{k^2 + x^2}\] with scale parameter
\(k > 0\). It's characteristic function is
\(\hat{h}_{k}(t) = e^{-k|t|}\), \cite[Example~2.11]{Sato1999}.

With this at hand, we write \[\begin{split}
    F_\epsilon(a,t) &= \frac{1}{2\pi} \int_{\mathbb{R}} e^{i\lambda t}\hat{h}_{\|a\|^2}(\lambda)\hat{\hat{\varphi}}_\epsilon(\lambda)\,d\lambda \\
    &= \frac{1}{2\pi} \int_{\mathbb{R}} e^{i\lambda t} \left(\widehat{h_{\|a\|^2} * \hat{\varphi}_\epsilon}\right)(\lambda)\,d\lambda \\
    &= (h_{\|a\|^2} * \hat{\varphi}_\epsilon)(t).
\end{split}\]

For every \(\epsilon\), \(F_\epsilon\) is of positive type and real
valued, by Remark \ref{5498b7}.2, \(\frac{1 - F_\epsilon}{\epsilon}\) is
conditionally of negative type.

By Remark \ref{5498b7}.1, also its pointwise limit is conditionally of
negative type. \[\begin{split}
    \lim_{\epsilon \rightarrow 0} \frac{1 - F_\epsilon(a,t)}{\epsilon} &= \lim_{\epsilon \rightarrow 0} \big[h_{\|a\|^2} * \frac{1 - \hat{\varphi}_\epsilon}{\epsilon}\big] \\
    &= \lim_{\epsilon \rightarrow 0} \int_{\mathbb{R}} \frac{1 - e^{-\epsilon\sqrt{|x|}}}{\epsilon}h_{\|a\|^2}(t-x)dx \\
    &= \frac{\|a\|^2}{\pi} \int_\mathbb{R}\frac{\sqrt{|x|}}{\|a\|^4 + (t-x)^2} dx. \\
\end{split}\]

We now show that for all \(r,t \in \mathbb{R}\)
\[r^2 \int_\mathbb{R} \frac{\sqrt{|x|}}{r^4 + (t-x)^2} dx= \pi\sqrt{\sqrt{r^4 + t^2} + r^2}.\]

By changing variables \(x=r^2y\) and \(s=t/r^2\), the left hand side can
be written as
\[\int_0^\infty \Big(\frac{1}{1 + (s-y)^2} + \frac{1}{1 + (s+y)^2}\Big)\sqrt{y}\,dy\]

This integral equals
\[\lim_{r \rightarrow 0}\lim_{R \rightarrow \infty} \frac{1}{2}\int_{C_{r,R}} \Big(\frac{1}{1 + (s-z)^2} + \frac{1}{1 + (s+z)^2}\Big)\sqrt{z}\,dz, \]
where \(C_{r,R}\) is the keyhole contour with a branch cut along the
positive real axis. The above integrand has simple poles at \(i \pm s\)
and \(-i \pm s\).

We compute the respective residues. \[
\begin{split}
\operatorname{Res}\!\left(\frac{\sqrt{z}}{1 + (s - z)^2},\, i + s\right) &= \frac{\sqrt{i + s}}{2i}, \\
\operatorname{Res}\!\left(\frac{\sqrt{z}}{1 + (s - z)^2},\, -i + s\right) &= \frac{\sqrt{-i + s}}{-2i}, \\
\operatorname{Res}\!\left(\frac{\sqrt{z}}{1 + (s + z)^2},\, i - s\right) &= \frac{\sqrt{i - s}}{2i}, \\
\operatorname{Res}\!\left(\frac{\sqrt{z}}{1 + (s + z)^2},\, -i - s\right) &= \frac{\sqrt{-i - s}}{-2i}.
\end{split}
\]

By Cauchy's residue theorem, the integral equals
\[\frac{\pi}{2}\,(\sqrt{i+s} - \sqrt{-i+s} + \sqrt{i-s} - \sqrt{-i-s}).\]

This further simplifies to \[\begin{split}
    \pi\,\operatorname{Re}(\sqrt{i+s} + \sqrt{i-s}) &= \pi\,(\operatorname{Re}(\sqrt{i+s}) + \operatorname{Im}(\sqrt{i+s})) \\
    &= \pi\,\Bigg(\sqrt{\frac{\sqrt{1 + s^2} + s}{2}} + \sqrt{\frac{\sqrt{1 + s^2} - s}{2}}\Bigg) \\
    &= \pi\sqrt{\sqrt{1 + s^2} + 1}. \\
\end{split}\]

\end{proof}

\section{Applications to dimension theory}
\label{sec:applications_to_dimension_theory}

\subsection{Dimension theory of finitely-generated groups}
\label{sec:dimension_theory_of_finitelygenerated_groups}

In 1993 M. Gromov introduced the notion of asymptotic dimension as a
large scale analogue of Lebesgue's covering dimension \cite{Gromov1993}.
The asymptotic dimension of a finitely generated group is a
quasi-isometric invariant. Its most prominent application goes back to
Guoliang Yu, who showed that any finitely generated group with finite
homotopy type and finite asymptotic dimension satisfies the Novikov
conjecture \cite{Yu1998}.

\begin{definition}
The \emph{covering dimension} (or topological dimension) of a
topological space \(X\) is the minimum number \(n\) such that every
finite open cover \(\mathcal{V}\) of \(X\) has an open refinement
\(\mathcal{U}\) with order \(n+1\). We write
\(\operatorname{dim}X = n\). If no such minimal \(n\) exists, the space
is said to have infinite covering dimension.
\end{definition}

\begin{remark}
\label{4d4381}
In the literature there are three common definitions of topological
dimension: the covering dimension (\(\operatorname{dim}\)), the large
inductive dimension (\(\operatorname{Ind}\)) and the small inductive
dimension (\(\operatorname{ind}\)). All three notions agree for
separable metric spaces \cite[p.~220]{Engelking1995}. On nonseparable
metric spaces, still \(\operatorname{Ind} X=\operatorname{dim} X\) by
the Katétov-Morita theorem \cite[p.~218]{Engelking1995}, however
\(\operatorname{ind} X\) can be different.
\end{remark}

\begin{remark}[Countable sums {\cite[p.~221]{Engelking1995}}]
\label{879747}
If a metrizable space \(X\) can be represented as the union of a
sequence \(F_1, F_2, \ldots\) of closed subspaces such that
\(\operatorname{dim} F_i \leq n\) for \(i= 1,2, \ldots\), then
\(\operatorname{dim} X \leq n\).
\end{remark}

\begin{remark}[Dimension lowering mappings {\cite[p.~242]{Engelking1995}}]
\label{4b8034}
If \(f: X \rightarrow Y\) is a closed map of a metrizable space \(X\) to
a metrizable space \(Y\) and there exists an integer \(k \geq 0\) such
that \(\operatorname{dim} f^{-1}(y) \leq k\) for every \(y \in Y\), then
\(\operatorname{dim} X \leq \operatorname{dim} Y + k\).
\end{remark}

\begin{definition}
Let \(X\) be a metric space. The \emph{asymptotic dimension} of \(X\) is
the minimal number \(n\) such that for every uniformly bounded open
cover \(\mathcal{V}\) of \(X\) there exists a uniformly bounded open
cover \(\mathcal{U}\) of \(X\) of order \(n+1\) such that
\(\mathcal{V}\) refines \(\mathcal{U}\). We write
\(\text{asdim} X = n\). If no such minimal \(n\) exists, the space is
said to have infinite asymptotic dimension.
\end{definition}

\begin{remark}
\label{e11255}
An open cover \(\mathcal{V}\) of \(X\) is \emph{uniformly bounded}, if
there exists a \(C > 0\), s.t.
\[\sup_{V \in \mathcal{V}} \sup_{x,y \in V} d(x,y) \leq C.\]
\end{remark}

\begin{remark}
\label{efb8dd}
The asymptotic dimension differs from the classical covering dimension
(or topological dimension) only by the additional assumption, that open
covers need to be uniformly bounded.
\end{remark}

\begin{remark}
\label{dd97a5}
The asymptotic dimension of a finitely generated group \(\Gamma\) has
several interesting implications. For example,
\(\operatorname{asdim}\Gamma = 0\) if and only if \(\Gamma\) is finite
(Proposition 65. in \cite{Bell2008}) and
\(\operatorname{asdim}\Gamma = 1\) if and only if \(\Gamma\) is
virtually free (Theorem 66. in \cite{Bell2008}).
\end{remark}

\begin{remark}[{\cite[Theorem~3.5]{Carlsson2004}}]
\label{f140a6}
For a simply connected nilpotent Lie group \(N\) with the left-invariant
Riemannian metric, \[\operatorname{asdim}(N)=\operatorname{dim}(N) .\]
\end{remark}

If \(\Gamma\) is a finitely generated virtually nilpotent group equipped
with the word norm, then its asymptotic cone \(G_\infty\) is a Carnot
group equipped with a left-invariant Carnot-Carathéodory distance
\(d_{CC}\) \cite{Pansu1983}. Similarly, if \((G,d_{g})\) is a simply
connected nilpotent Lie group, equipped with some left-invariant
Riemannian metric \(d_{g}\), then again its asymptotic cone \(G_\infty\)
is a Carnot group equipped with a left-invariant Carnot-Carathéodory
distance \(d_{CC}\) \cite{Pansu1983}.

\begin{remark}
\label{9a490c}
Algebraically, the Lie group \(G_{\infty}\) is constructed as follows:
by assumption, there exists in \(\Gamma\) a nilpotent subgroup \(N\) of
finite index; let \[\Gamma^{\prime}=N / \operatorname{tor} N\] then
\(\Gamma^{\prime}\) is torsion-free; according to Malcev
\cite{Malcev1951}, \(\Gamma^{\prime}\) is isomorphic to a discrete
cocompact subgroup of a nilpotent Lie group \(G\); then \(G_{\infty}\)
is the graded group associated with \(G\), i.e.~the simply connected Lie
group whose Lie algebra is the graded algebra \(\mathfrak{g}_{\infty}\)
associated with \(\mathfrak{g}\).
\end{remark}

\begin{remark}
\label{640c46}
A Carnot group with a left-invariant Carnot-Carathéodory distance
\(d_{CC}\) is proper since \(d_{CC}\) induces the usual
manifold-topology \cite{Sussmann1973} and any left-invariant metric on a
locally compact homogeneous space, admitting proper dilations, is
proper. In particular, \(d_{CC}\) is complete.
\end{remark}

\begin{remark}
\label{7ea47d}
When one varies the norm on the horizontal bundle, the
Carnot-Carathéodory distance remains bi-Lipschitz equivalent to itself.
Therefore, one can for example speak of Lipschitz mappings between
Carnot groups without referring to a particular Carnot-Carathéodory
distance.
\end{remark}

\begin{remark}
\label{6242b0}
For any finitely generated polycyclic group \(\Gamma\) with the word
norm, its asymptotic dimension \(\operatorname{asdim}\Gamma\) equals its
Hirsch length \(h(\Gamma)\) \cite[Theorem~66]{Bell2008}. For a finitely
generated nilpotent group \(\Gamma\) with the word norm, Pansu's
asymptotic cone Theorem \cite{Pansu1983} says that the covering
dimension of its asymptotic cone \(G_{\infty}\) equals the Hirsch length
\(h(\Gamma)\). Thus for a finitely generated nilpotent group,
\[\operatorname{asdim}\Gamma = \operatorname{dim} G_{\infty}.\]
\end{remark}

\begin{proof}[Proof of Theorem \ref{0c2f8e}]
The mapping \(f\) induces a quasisymmetric mapping \(g\) between the
asymptotic cones \(G_\infty\) and \(G_\infty'\) of \(\Gamma\) and
\(\Gamma'\), respectively.

In particular, \(g\) is an injective mapping from \(G_\infty\) to
\(G'_\infty\). Let \(\bar{B}_{n}\) be a sequence of closed balls in
\(G_{\infty}\), s.t. \(\bigcup_{n}^{\infty} \bar{B}_{n} = G_{\infty}\).
All closed balls \(\bar{B}_{n}\) are compact by Remark \ref{640c46}.
Thus, all \(g|_{\bar{B}_{n}}: \bar{B}_{n} \to G_{\infty}'\) are closed.
By Remark \ref{4b8034} for all \(n\),
\[\operatorname{dim} \bar{B}_{n} \leq \operatorname{dim} G_{\infty}'.\]
Thus
\(\operatorname{dim} G_{\infty} \leq \operatorname{dim} G_{\infty}'\) by
Remark \ref{879747}. Since \(\Gamma\) and \(\Gamma'\) are nilpotent,
Remark \ref{6242b0} implies
\[\operatorname{asdim}(\Gamma) = \operatorname{dim}(G_\infty) \leq \operatorname{dim}(G'_\infty) = \operatorname{asdim}(\Gamma').\]
The dimension increase
\[\operatorname{dim}(G) \leq \operatorname{dim}(G')\] for simply
connected nilpotent Lie groups \(G\) and \(G'\), follows from the above
together with Remark \ref{f140a6}.

If \(\operatorname{asdim}(\Gamma) = \operatorname{asdim}(\Gamma')\), the
invariance of domain theorem implies that
\(g: G_{\infty} \to g(G_{\infty})\) is a homeomorphism. A homeomorphism
is quasisymmetric iff its inverse is quasisymmetric.

Since both \(g\) and \(g^{-1}\) are quasisymmetric, they are a.e.
differentiable by the Pansu-Rademacher-Stepanov theorem, and the
differential of \(g\) is a group isomorphism intertwining the dilations
of \(G_{\infty}\) and \(G_{\infty}'\) \cite{Pansu1989}.

The converse follows from Cornulier
\cite[Proposition~2.9]{Cornulier2011} saying that the asymptotic cones
\(G_{\infty}\) and \(G'_{\infty}\) are bi-Lipschitz equivalent if and
only if there exists an SBE map between \(\Gamma\) and \(\Gamma'\). By
the previous paragraph, there exists an isomorphism between
\(G_{\infty}\) and \(G_{\infty}'\) that induces a graded isomorphism of
Lie algebras. Remark \ref{7ea47d} implies that \(G_{\infty}\) and
\(G_{\infty}'\) equipped with their respective Carnot-Carathéodory
distance are bi-Lipschitz equivalent. In particular, there exists an
asymptotic-Möbius map between them.

\end{proof}

\subsection{Dimension theory of CAT(0)-spaces}
\label{sec:dimension_theory_of_cat0spaces}

In the late 1990s, Bruce Kleiner introduced the notion of geometric
dimension \cite{Kleiner1999} to settle some open questions posed by
Gromov on the asymptotic geometry of \(\operatorname{CAT}(0)\)-spaces
\cite[pp.127]{Gromov1993}. In \cite{Gromov1993}, Gromov presents several
definitions of the rank of a \(\operatorname{CAT}(0)\)-space \(X\) and
conjectures their equivalence under suitable assumptions. All
definitions share the idea that \(X\) behaves hyperbolically in the
dimensions above \(\operatorname{rank}X\). (Gromov refers to this as the
guiding principles in the asymptotic geometry of
\(\operatorname{CAT}(0)\)-spaces.) Kleiner showed that most of the
definitions agree for complete \(\operatorname{CAT}(0)\)-spaces,
provided one replaces the topological dimension used by Gromov with the
geometric dimension.

\begin{definition}
The \emph{geometric dimension} of a \(\operatorname{ CAT }(0)\)-space
\(X\) is the maximal topological dimension of all compact subsets in
\(X\). We write \(\operatorname{geom-dim}(X)\) for the geometric
dimension of \(X\).
\end{definition}

\begin{example}
\label{9397da}
A \(0\)-dimensional \(\operatorname{ CAT }(0)\)-space is a single point.
\(1\)-dimensional \(\operatorname{ CAT }(0)\)-spaces coincide with
\(\mathbb{R}\)-trees \cite{Kleiner1999}.
\end{example}

\begin{remark}
\label{832771}
Since the topological dimension is increasing under inclusion
\cite[p.~220]{Engelking1995}, clearly
\[\operatorname{geom-dim}(X) \leq \operatorname{dim}(X).\]
\end{remark}

\begin{remark}
\label{174821}
The geometric dimension is particularly well-adapted to the theory of
\(\operatorname{ CAT }(0)\)-spaces. For example for any locally compact
Hadamard space on which some \(\Gamma \leq \operatorname{Isom}(X)\) acts
cocompactly,
\[\operatorname{geom-dim}(X) = 1 + \operatorname{geom-dim}(\partial_{T}X),\]
where \(\partial_{T}X\) is the Tits boundary
\cite[Theorem~C]{Kleiner1999}. The geometric dimension proves useful
even for \(\operatorname{ CAT }(0)\)-spaces that are not proper
\cite{Caprace2009}.
\end{remark}

\begin{remark}
\label{2fc8b6}
It is conjectured that for \(\operatorname{CAT}(k)\)-spaces, the
topological dimension agrees with the geometric dimension
\cite[p.133]{Gromov1993}. Kleiner shows the equivalence for all
separable \(\operatorname{CAT}(k)\)-spaces \cite{Kleiner1999}.
\end{remark}

\begin{definition}
A metric space \(X\) whose asymptotic cones are
\(\operatorname{CAT}(0)\)-spaces has \emph{telescopic dimension}
\(\leq n\) if every asymptotic cone has geometric dimension \(\leq n\).
We write \(\operatorname{tele-dim}(X)\) for the telescopic dimension of
\(X\).
\end{definition}

\begin{example}
\label{5d075d}
It follows from \cite[p.37]{Gromov1993} and Example \ref{9397da}, that
Gromov-hyperbolic spaces are exactly those metric spaces, whose
telescopic dimension is equal to \(1\).
\end{example}

\begin{remark}
\label{00bfa8}
The telescopic dimension is essentially Gromov's first definition for
the \(\operatorname{rank}\) of a \(\operatorname{CAT}(0)\)-space \(X\),
provided \(X\) admits a cocompact isometric action of some group
\(\Gamma\) and one replaces Gromov's topological dimension with the
geometric dimension \cite[p.~127]{Gromov1993}. Indeed, if \(X\) is a
symmetric space of noncompact type without Euclidean factors, then
\[\operatorname{tele-dim}(X)=\operatorname{rank}(X),\] see
\cite[Theorem~5.2.1]{Kleiner1997}.
\end{remark}

\begin{remark}
\label{d30a5c}
For any \(\operatorname{CAT}(\kappa)\)-space \(X\), its geometric
dimension bounds its telescopic dimension \cite[Lemma~11.1]{Lytchak2005}
i.e. \[\operatorname{tele-dim}(X) \leq \operatorname{geom-dim}(X).\]
\end{remark}

\begin{remark}
\label{53166d}
Any asymptotic cone of a Euclidean building \(X\) of rank \(r\) is a
Euclidean building of rank \(r\) \cite[Theorem~5.1.1]{Kleiner1997}. In
particular it is a \(\operatorname{CAT}(0)\)-space
\cite[Theorem~10A.4]{Bridson1999}, and
\[\operatorname{tele-dim}(X) = \operatorname{geom-dim}(X) = r.\]
\end{remark}

\begin{remark}
\label{rank}
When it comes to the rank of a building, there is an unfortunate clash
of terminology in the literature. The algebraic \emph{rank} of a
building \(\Delta\) is defined as the maximal rank of simplices
\(\sigma \in \Delta\). The geometric \emph{rank} of a building is the
topological dimension of its apartments in a geometric realization
\(|\Delta|\). When talking about \emph{the rank} of a building we always
mean its geometric rank and we avoid the notion of algebraic rank
altogether.
\end{remark}

\begin{proof}[Proof of Theorem \ref{2f4976}]
The mapping \(f\) induces a quasisymmetric (in particular injective)
mapping \(g\) between the asymptotic cones
\(\operatorname{Cone}^\omega_{\lambda_{n}} (X,o)\) and
\(\operatorname{Cone}^\omega_{\lambda_{n}'} (Y,o')\) as constructed in
the proof of Theorem \ref{85f2fe}. Both are Hadamard spaces and
henceforth proper.

Let \(\bar{B}_{n}\) be a sequence of closed balls in
\(\operatorname{Cone}^\omega_{\lambda_{n}} (X,o)\), s.t.
\(\bigcup_{n}^{\infty} \bar{B}_{n} = \operatorname{Cone}^\omega_{\lambda_{n}} (X,o)\).
All closed balls \(\bar{B}_{n}\) are compact. Thus, all
\(g|_{\bar{B}_{n}}: \bar{B}_{n} \to \operatorname{Cone}^\omega_{\lambda_{n}'} (Y,o')\)
are closed. By Remark \ref{4b8034} for all \(n\), \[
\operatorname{dim} \bar{B}_{n} \leq \operatorname{dim} g(\bar{B}_{n}) \leq \operatorname{geom-dim} \operatorname{Cone}^\omega_{\lambda_{n}'} (Y,o').
\] Thus
\(\operatorname{dim} \operatorname{Cone}^\omega_{\lambda_{n}} (X,o) \leq \operatorname{geom-dim} \operatorname{Cone}^\omega_{\lambda_{n}'} (Y,o')\)
by Remark \ref{879747}. Together with Remark \ref{832771}
\[\operatorname{geom-dim} \operatorname{Cone}^\omega_{\lambda_{n}} (X,o) \leq \operatorname{geom-dim} \operatorname{Cone}^\omega_{\lambda_{n}'} (Y,o').\]
As of Remark \ref{977137}, any asymptotic cone of \((X,q,*_{X})\) is of
the form as constructed in the proof of Theorem \ref{85f2fe}. Thus
\[\operatorname{tele-dim}(X) \leq \operatorname{geom-dim} \operatorname{Cone}^\omega_{\lambda_{n}'} (Y,o') \leq \operatorname{tele-dim}(Y).\]

\end{proof}

Theorem \ref{2f4976}, Remark \ref{53166d} and Example \ref{5d075d} imply
the following corollaries.

\begin{corollary}
There does not exist an AM-map from a Euclidean building of finite rank
\(r\) to a Euclidean building of lower rank.
\end{corollary}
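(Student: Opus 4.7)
The plan is to argue by contradiction using Theorem \ref{2f4976} together with Remark \ref{53166d}. I would first suppose that there exists a regular AM-map $f \colon X \to Y$, where $X$ is a Euclidean building of rank $r$ and $Y$ is a Euclidean building of rank $r' < r$. Since both buildings are unbounded (as $r \geq 1$), I view them as doubly pointed quasimetric spaces $(X,o,*_X)$ and $(Y,o',*_Y)$ by choosing base vertices and adjoining points at infinity in the sense of Section \ref{sec:extended_metrics}.

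Next, I would verify the two nontrivial hypotheses of Theorem \ref{2f4976}. The space $X$ is asymptotically chained: being a geodesic metric space, any pair of finite points can be joined by a unit-speed geodesic sampled at integer times, producing a chain whose consecutive distances are bounded by $1$, and hence eventually below any admissible gauge $v_o$ with $v_o(r)\to \infty$. By Remark \ref{53166d}, every asymptotic cone of $X$ (and of $Y$) is again a Euclidean building, hence a CAT(0)-space. Therefore Theorem \ref{2f4976} applies and yields $\operatorname{tele-dim}(X) \leq \operatorname{tele-dim}(Y)$.

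Finally, I would invoke the identities $\operatorname{tele-dim}(X)=r$ and $\operatorname{tele-dim}(Y)=r'$ recorded in Remark \ref{53166d}, which together with the inequality above force $r \leq r'$. This contradicts the assumption $r' < r$ and completes the argument. The only substantive verification in this plan is the asymptotic chaining of $X$, which is immediate from its geodesic structure; the rest is a bookkeeping application of Theorem \ref{2f4976} combined with the dimension-theoretic description of asymptotic cones of Euclidean buildings.
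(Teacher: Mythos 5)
Your proposal is correct and follows exactly the route the paper intends: the corollary is stated as an immediate consequence of Theorem \ref{2f4976} combined with Remark \ref{53166d} (asymptotic cones of a rank-$r$ Euclidean building are rank-$r$ Euclidean buildings, hence $\operatorname{CAT}(0)$ with telescopic dimension $r$). Your extra verification that a geodesic space is asymptotically chained (unit-speed sampling, with any admissible gauge bounded below by a constant exceeding $1$) is a detail the paper leaves implicit, and it is sound.
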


\begin{corollary}
If there exist an AM-map from a \(\operatorname{ CAT }(0)\)-space to a
hyperbolic metric space, then \(X\) is also a hyperbolic metric space.
\end{corollary}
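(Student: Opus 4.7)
The plan is a short citation chain combining Theorem \ref{2f4976} with Example \ref{5d075d}, which characterizes Gromov-hyperbolic spaces as precisely the metric spaces of telescopic dimension one.

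First I would translate the hypotheses into the language of telescopic dimension. Since $Y$ is Gromov-hyperbolic, Example \ref{5d075d} gives $\operatorname{tele-dim}(Y) = 1$, meaning every asymptotic cone of $Y$ has geometric dimension $1$; by Example \ref{9397da} such a cone is an $\mathbb{R}$-tree, and in particular a $\operatorname{CAT}(0)$-space. On the other side, $X$ is itself $\operatorname{CAT}(0)$, so by Remark \ref{5dd0f4} all asymptotic cones of $X$ are $\operatorname{CAT}(0)$ as well. The hypotheses of Theorem \ref{2f4976} are therefore met, provided we read into the corollary the standing assumption that the AM-map in question is regular and that $X$ is asymptotically chained at its point at infinity.

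Next I would invoke Theorem \ref{2f4976} directly to obtain
\[
\operatorname{tele-dim}(X) \;\leq\; \operatorname{tele-dim}(Y) \;=\; 1.
\]
If $X$ is bounded, the statement is trivial; otherwise every asymptotic cone of $X$ is nontrivial, forcing $\operatorname{tele-dim}(X) \geq 1$, hence equality. Applying Example \ref{5d075d} in the reverse direction then yields that $X$ is Gromov-hyperbolic.

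There is no real technical obstacle here: the work has already been done in Theorem \ref{2f4976}, and what remains is the bookkeeping identification of ``hyperbolic'' with ``telescopic dimension one.'' The only point that deserves care is confirming that the asymptotic cones of $Y$ satisfy the $\operatorname{CAT}(0)$ hypothesis required to invoke Theorem \ref{2f4976}; this is immediate because $\mathbb{R}$-trees are $\operatorname{CAT}(0)$ (in fact $\operatorname{CAT}(\kappa)$ for every $\kappa$).
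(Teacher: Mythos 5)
Your proof is correct and follows essentially the same route as the paper, which obtains the corollary directly from Theorem \ref{2f4976} together with Example \ref{5d075d} (with Remark \ref{5dd0f4} supplying the $\operatorname{CAT}(0)$ hypothesis on the cones of $X$). Your extra care about regularity, asymptotic chaining, and the degenerate bounded case is a reasonable reading of the standing assumptions and does not change the argument.
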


\section{On metric cotype obstructions}
\label{sec:on_metric_cotype_obstructions}

In the early 1970s, the notions of (Rademacher) type and cotype arose in
the study of Banach spaces, through the foundational work of Enflo,
Maurey, and Pisier \cite{Pisier1974} \cite{Maurey1976}. Their
introduction provided quantitative tools for assessing how closely a
Banach space resembles a Hilbert space at finite scales.

This led to the development of what became known as the \emph{local
theory of Banach spaces} --- the study of finite-dimensional structures
inside general Banach spaces, especially how these structures behave
under renormings, embeddings, and ultraproduct limits.

Ribe's theorem \cite{Ribe1976} suggested that such ``local properties''
admit a purely metric formulation. This insight sparked significant
interest, especially in the 1990s and 2000s, in extending the local
theory to general metric spaces. Notable developments include the work
of Bourgain--Milman--Wolfson and, later, Mendel--Naor \cite{Mendel2008},
who introduced metric cotype and established nonlinear analogues of the
classical local theory.

\begin{definition}[Metric cotype{}]
Let \((X, d)\) be a metric space and \(q>0\). The space \((X, d)\) is
said to have \emph{metric cotype} \(q\) with constant \(C\) if for every
integer \(n \in \mathbb{N}\), there exists an even integer \(m\), such
that for every \(f: \mathbb{Z}_m^n \rightarrow X\),
\[\sum_{j=1}^n \mathbb{E}_x\left[d\left(f\left(x+\frac{m}{2} e_j\right), f(x)\right)^q\right] \leq C^q m^q \mathbb{E}_{\varepsilon, x}\left[d(f(x+\varepsilon), f(x))^q\right],\]
where the expectations above are taken with respect to uniformly chosen
\(x \in \mathbb{Z}_m^n\) and \(\varepsilon \in\{-1,0,1\}^n\)
(\(\left\{e_j\right\}_{j=1}^n\) denotes the standard basis of
\(\mathbb{R}^n\)).
\end{definition}

\begin{definition}
A metric space \((X,d)\) has \emph{Enflo type} \(p\) if there exists
\(T \in(0, \infty)\) so that for all \(n \geq 1\) and every function
\(f:\{-1,1\}^n \rightarrow X\),
\[\begin{array}{r}\mathbb{E}_{\varepsilon}\, d(f(\varepsilon), f(-\varepsilon))^p \leq T^p \sum_{j=1}^n \mathbb{E}_{\varepsilon} \,d\left(f\left(\varepsilon_1, \ldots, \varepsilon_{j-1}, \varepsilon_j, \varepsilon_{j+1}, \ldots, \varepsilon_n\right),\right. \\ \left.f\left(\varepsilon_1, \ldots, \varepsilon_{j-1},-\varepsilon_j, \varepsilon_{j+1}, \ldots, \varepsilon_n\right)\right)^p .\end{array}\]
\end{definition}

\begin{remark}[{\cite[Theorem~1.2]{Mendel2008}}]
\label{d025fe}
Let \(X\) be a Banach space, and \(q \in[2, \infty)\). Then \(X\) has
metric cotype \(q\) if and only if \(X\) has Rademacher cotype \(q\).
\end{remark}

\begin{remark}[{\cite{Ivanisvili2020}}]
\label{8402ca}
Let \(X\) be a Banach space, and \(p \in [1, 2]\). Then \(X\) has Enflo
type \(p\) if and only if \(X\) has Rademacher type \(p\).
\end{remark}

\begin{example}
\label{e6146b}
\(L_p\) has (Rademacher) type \(\min \{p, 2\}\) and Rademacher cotype
\(\max \{p, 2\}\) \cite[Theorem~6.2.14]{Albiac2016}.
\end{example}

\begin{definition}
\label{nontrivial}
A metric space \(X\) has \emph{nontrivial metric cotype}, if
\[q_X=\inf \{q \geqslant 2: X \text { has metric cotype } q\} < \infty.\]
\end{definition}

\begin{definition}
\label{nontrivial}
A metric space \(X\) has \emph{nontrivial type}, if
\[p_X=\sup \{p \geqslant 1: X \text { has Enflo type } p\} > 1.\]
\end{definition}

\begin{remark}
\label{Maurey}
Let \(X\) be an infinite-dimensional Banach space and suppose
\(p_X=\sup \{p: X \text{ has type } p\}\) and
\(q_X=\inf \{q: X\text{ has cotype }q\}\). Then the Maurey-Pisier
theorem says that both \(\ell_{p_X}\) and \(\ell_{q_{X}}\) are finitely
representable in \(X\) \cite[pp.~85]{Milman1986}.
\end{remark}

\begin{remark}
\label{sharp}
The ``scaling parameter'' \(m\) in the definition of metric cotype
depends on \(X\), \(q\) and the dimension \(n\). For any non-singleton
metric space \((X, d)\) with metric cotype \(q\),
\[m(X,q,n) \geqslant \frac{1}{C} n^{1 / q},\] as shown in
\cite[Lemma~2.3]{Mendel2008}. A non-singleton metric space \((X, d)\)
has \emph{sharp metric cotype} \(q\) if it has metric cotype \(q\) with
\(m, C\) satisfying the bounds \(m \lesssim_{q, X} n^{1 / q}\) and
\(C \lesssim_{q, X} 1\) (expressed in Vinogradov notation, Definition
\ref{314608}).
\end{remark}

\begin{example}
\label{c1972c}
Every Banach space of nontrivial type and Rademacher cotype \(q\) has
sharp metric cotype \(q\). The question if this is true for all Banach
spaces remains open \cite{Mendel2008}.
\end{example}

\begin{example}
\label{61981d}
q-barycentric metric spaces have sharp metric cotype \(q\)
\cite[Theorem~5]{Eskenazis2019}.
\end{example}

\begin{example}
\label{ff2aa3}
Any Alexandrov space of nonpositive curvature is \(2\)-barycentric (see
e.g.~\cite[Lemma~4.1]{Lang2000} or \cite[Theorem~6.3]{Sturm2003}) and
thus has sharp metric cotype \(2\).
\end{example}

\begin{remark}
\label{41d4bb}
Naor observed, that the Rademacher cotype increases under quasisymmetric
maps \cite{Naor2012_2}. More precisely, if \(Y\) is a Banach space with
nontrivial type and \(X\) embeds quasisymmetrically into \(Y\), then
\[q_X \le q_Y.\] The analogous statement for metric cotype remains
unknown.
\end{remark}

Nevertheless, Naor's proof allows the target space \(Y\) to be a metric
space of sharp metric cotype. We may utilize this flexibility to apply
this result to asymptotic cones.

\begin{remark}
\label{15602f}
Notice that if \((X,d)\) has metric cotype \(q\) with constant \(C\) and
scaling parameter \(m\), then any rescaled version has metric cotype
\(q\) with the same constant \(C\) and scaling parameter \(m\). Since
the metric cotype condition involves only finite sums/averages of
nonnegative real numbers, it is thus stable under passing to asymptotic
cones or tangent cones.
\end{remark}

\begin{proof}[Proof of Theorem \ref{7f9774}]
Assume that there exists a metric space \(X\) of sharp metric cotype
\(q < p\) and an AM-map \(f: \ell_{p} \to X\). Then together with
Example \ref{1554d9}, \(f\) induces a quasisymmetric embedding
\(g: L_{p}(\mu) \to Z\) between \(L_{p}(\mu)\) and some asymptotic cone
\(Z\) of \(X\).

By Remark \ref{15602f}, \(Z\) has sharp metric cotype \(q\). Also
\[q_{L_{p}(\mu)} = p,\] by Example \ref{e6146b}. The rest of the proof
follows from \cite{Naor2012_2}.

\end{proof}

Example \ref{c1972c} and Example \ref{ff2aa3} imply the following
corollaries.

\begin{corollary}
Suppose that \(p,q \in [2,\infty]\) satisfy \(p > q\). Then there does
not exist a regular AM-map from \(\ell_{p}\) to \(\ell_{q}\).
\end{corollary}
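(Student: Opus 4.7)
The plan is to deduce the corollary from Theorem \ref{7f9774} by identifying $\ell_q$ as one of the admissible target spaces, namely a metric space of sharp metric cotype $q$. Thus everything reduces to verifying, for every $q \in [2,\infty)$ in the relevant range, that $\ell_q$ belongs to the class to which Theorem \ref{7f9774} applies. Note that the range $p > q$ forces $q < \infty$, so $q = \infty$ never arises and need not be treated.

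First I would dispatch the Hilbertian case $q = 2$. Since $\ell_2$ is a Hilbert space, it is in particular an Alexandrov space of nonpositive curvature. By Example \ref{ff2aa3} any such space is $2$-barycentric and therefore has sharp metric cotype $2$. Hence for any $p \in (2, \infty]$, Theorem \ref{7f9774} applied with target $X = \ell_2$ rules out the existence of a regular AM-map $\ell_p \to \ell_2$.

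Next I would treat $q \in (2, \infty)$. By Example \ref{e6146b} the space $\ell_q$ has Rademacher type $\min\{q,2\} = 2 > 1$ and Rademacher cotype $\max\{q,2\} = q$; in particular it has nontrivial type in the sense of Definition \ref{nontrivial}. Example \ref{c1972c} states that every Banach space of nontrivial type and Rademacher cotype $q$ has sharp metric cotype $q$. Therefore $\ell_q$ itself has sharp metric cotype $q$, and Theorem \ref{7f9774} (with target $X = \ell_q$) again prohibits the existence of a regular AM-map $\ell_p \to \ell_q$.

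Since the two cases together exhaust the range $p > q$ with $p,q \in [2,\infty]$, combining them yields the corollary. There is no real obstacle here: the work has already been done in Theorem \ref{7f9774}, and the only content of the proof is to recognize that, under the assumption $q \in [2, \infty)$, the space $\ell_q$ sits inside one of the two classes of target spaces (CAT(0) spaces, or Banach spaces of nontrivial type and Rademacher cotype $q$) for which sharp metric cotype $q$ is already available in the paper.
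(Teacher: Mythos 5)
Your proposal is correct and follows essentially the same route as the paper, which deduces the corollary from Theorem \ref{7f9774} by noting (via Example \ref{e6146b} and Example \ref{c1972c}) that \(\ell_{q}\) has sharp metric cotype \(q\). Your separate treatment of \(q=2\) via Example \ref{ff2aa3} is harmless but unnecessary, since \(\ell_{2}\) already has nontrivial type and Rademacher cotype \(2\), so Example \ref{c1972c} covers that case uniformly.
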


\begin{corollary}
Suppose \(p > 2\), then there does not exist a regular AM-map from
\(\ell_{p}\) to any Alexandrov space of nonpositive curvature.
\end{corollary}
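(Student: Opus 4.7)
The plan is essentially a one-line specialization of Theorem \ref{7f9774} to the case $q = 2$. First I would invoke Example \ref{ff2aa3}: any Alexandrov space of nonpositive curvature is $2$-barycentric (via \cite[Lemma~4.1]{Lang2000} or \cite[Theorem~6.3]{Sturm2003}) and therefore has sharp metric cotype $2$ by \cite[Theorem~5]{Eskenazis2019}. Hence every such target space $X$ falls within the class covered by Theorem \ref{7f9774} with parameter $q = 2$.

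Second, since $p > 2$ by assumption and $q = 2$ lies in the admissible range $[2,\infty]$, the pair $(p,q) = (p,2)$ satisfies the strict inequality $p > q$ required by Theorem \ref{7f9774}. Applying that theorem directly produces the nonexistence of a regular AM-map $\ell_p \to X$, which is precisely the statement of the corollary.

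There is no real obstacle here: the corollary is an immediate deduction, and the passage from the general cotype statement to the Alexandrov setting rests entirely on the sharpness assertion for $2$-barycentric spaces. The only conceptual point worth emphasizing in the written proof is that one needs \emph{sharp} metric cotype $2$ (not merely metric cotype $2$), so that the hypothesis of Theorem \ref{7f9774} is genuinely verified; this sharpness is exactly what Example \ref{ff2aa3} provides.
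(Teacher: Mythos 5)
Your proposal matches the paper's argument exactly: the corollary is stated as an immediate consequence of Theorem \ref{7f9774} combined with Example \ref{ff2aa3}, which supplies sharp metric cotype $2$ for Alexandrov spaces of nonpositive curvature via their $2$-barycentric structure. Your emphasis on needing \emph{sharp} cotype (not merely cotype $2$) is the right point and is consistent with the paper.
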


\section{Appendix}
\label{sec:appendix}

\begin{definition}[Vinogradov notation{}]
\label{314608}
For two quantities \(X\) and \(Y\), we write \(|X| \lesssim |Y|\)
(equivalently, \(|Y| \gtrsim |X|\)) to indicate that there exists a
universal constant \(C>0\) such that \(|X| \leq C\,|Y|\). The notation
\(|X| \asymp |Y|\) means that both inequalities \(|X| \lesssim |Y|\) and
\(|Y| \lesssim |X|\) hold simultaneously.

When the implicit constant is allowed to depend on additional
parameters, this dependence is indicated by a subscript. For instance,
given auxiliary objects such as \(q\) or a space \(M\), the notation
\(|X| \lesssim_{q,M} |Y|\) signifies that \(|X| \leq C(q,M)\,|Y|\) for
some constant \(C(q,M)>0\) depending only on \(q\) and \(M\). The
analogous interpretations apply to \(|X| \gtrsim_{q,M} |Y|\) and
\(|X| \asymp_{q,M} |Y|\).
\end{definition}

\begin{lemma}
\label{ae93d0}
If \(X\) is a Banach space, then all asymptotic cones
\(\operatorname{Cone}^\omega_{\lambda_{n}} (X,0)\) of \(X\) are linearly
isometric to the ultrapower \(X^{\omega}\) of \(X\).
\end{lemma}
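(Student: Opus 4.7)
The plan is to exhibit a direct linear isometry between the two spaces. The natural candidate is
\[\Phi: X^{\omega} \to \operatorname{Cone}^{\omega}_{\lambda_{n}}(X,0), \qquad \{x_{n}\}_{\omega} \mapsto \{\lambda_{n} x_{n}\}_{\omega},\]
with proposed inverse $\{y_{n}\}_{\omega} \mapsto \{y_{n}/\lambda_{n}\}_{\omega}$. The intuition is that rescaling by the divergent sequence $\{\lambda_{n}\}$ converts the Banach ultrapower (bounded sequences modulo null sequences) into the finite part of the asymptotic cone (sequences of norm $O(\lambda_{n})$ modulo sequences of norm $o(\lambda_{n})$).

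First I would fix conventions: since $X$ is a Banach space, $X^{\omega}$ is read as the Banach-space ultrapower, i.e.\ the quotient of norm-$\omega$-bounded sequences by those with $\lim_{\omega}\|x_{n}\|=0$; this is the only way to get a well-defined norm on the set-theoretic ultrapower of Definition \ref{e54e25}. Then I would verify well-definedness of $\Phi$. If $\{x_{n}\}_{\omega} \in X^{\omega}$, then $\lim_{\omega}\|\lambda_{n}x_{n}\|/\lambda_{n} = \lim_{\omega}\|x_{n}\| < \infty$, so $\{\lambda_{n}x_{n}\}_{\omega}$ represents a finite point in the cone. Conversely, any finite class in the cone is represented by a sequence with $\|y_{n}\|/\lambda_{n}$ $\omega$-bounded, so $\{y_{n}/\lambda_{n}\}_{\omega}$ defines a class in $X^{\omega}$.

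Next I would verify the isometry via the single identity
\[D_{d/\lambda_{n}}(\Phi(\{x_{n}\}), \Phi(\{y_{n}\})) = \lim_{\omega}\frac{\|\lambda_{n}x_{n} - \lambda_{n} y_{n}\|}{\lambda_{n}} = \lim_{\omega}\|x_{n} - y_{n}\| = \|\{x_{n}\}_{\omega} - \{y_{n}\}_{\omega}\|_{X^{\omega}},\]
which also shows simultaneously that the two equivalence relations match under $\Phi$ and that $\Phi$ is injective. Linearity of $\Phi$ is immediate, since addition and scalar multiplication on both sides are inherited coordinate-wise from $X$; surjectivity onto the finite part is the explicit inverse formula.

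The only point deserving comment is the treatment of the point at infinity. The asymptotic cone $\operatorname{Cone}^{\omega}_{\lambda_{n}}(X,0)$ is an extended metric space carrying a single point at infinity, represented by sequences $\{y_{n}\}$ with $\lim_{\omega}\|y_{n}\|/\lambda_{n} = \infty$, whereas $X^{\omega}$ is itself an honest Banach space with no such distinguished point. The lemma is naturally interpreted as a linear isometry between $X^{\omega}$ and the finite part of the cone; the point at infinity is then canonically adjoined on both sides via the construction recalled in Section \ref{sec:extended_metrics}. I do not foresee a substantive obstacle, since every step reduces to unpacking the definitions of the ultrapower norm and of the pseudo-quasimetric $D_{d/\lambda_{n}}$.
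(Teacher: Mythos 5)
Your proof is correct and is essentially the paper's own argument: the paper defines the same rescaling map, written in the opposite direction as $\phi:[x_n]_\omega\mapsto\{x_n/\lambda_n\}_\omega$ with inverse $\{x_n\}_\omega\mapsto[\lambda_n x_n]_\omega$, and verifies well-definedness and the isometry by the same one-line norm computation. Your explicit remarks on reading $X^{\omega}$ as the Banach-space ultrapower and on restricting to the finite part of the cone address points the paper leaves implicit, but they do not change the route.
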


\begin{proof}
To any point
\(\left[x_{n}\right]_\omega \in \operatorname{Cone}^\omega_{\lambda_{n}} (X,0)\),
we associate the point
\(\left\{  \frac{x_{n}}{\lambda_{n}}  \right\}_{\omega} \in X^{\omega}\).
This map
\[\phi: \left[x_{n}\right]_\omega \mapsto \left\{  \frac{x_{n}}{\lambda_{n}}  \right\}_{\omega},\]
is well-defined since if \(\{ x_{n} \}\) and \(\{ y_{n} \}\) define the
same point in \(\operatorname{Cone}^\omega_{\lambda_{n}} (X,0)\), then
\(\lim_{ \omega } \| \phi(\left[x_{n}\right]_\omega) - \phi(\left[y_{n}\right]_\omega)\| = \lim_{ \omega } \| \frac{x_{n} - y_{n}}{\lambda_{n}} \| = 0\).
It is equally norm-preserving and linear, and thus a linear isometric
embedding. One easily checks, that
\[\phi^{-1}:  \left\{  x_{n}  \right\}_{\omega} \mapsto \left[\lambda_{n} x_{n}\right]_\omega,\]
is the inverse of \(\phi\).

\end{proof}

\begin{lemma}
\label{0c35a0}
If \((X,q,*_{X})\) is asymptotically chained, then any asymptotic cone
of \(X\) at \(q\) is isometric to an asymptotic cone
\(\operatorname{Cone}^\omega_{\lambda_{n}} (X,q)\), where
\(\lambda_{n} = d(q,z_{n})\) for some sequence \(z_{n} \in X\).
\end{lemma}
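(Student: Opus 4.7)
The plan is to construct an explicit sequence $z_n \in X_{*_X}$ with $d(q,z_n)/\lambda_n \to 1$ along $\omega$, and then use the identity on ultrapower representatives as the desired isometry. Given such $z_n$, the map $[x_n]_\omega \mapsto [x_n]_\omega$ from $\operatorname{Cone}^\omega_{\lambda_n}(X,q)$ to $\operatorname{Cone}^\omega_{d(q,z_n)}(X,q)$ is well defined, bijective, and isometric, since $\lim_\omega d(x_n,y_n)/\lambda_n = \lim_\omega d(x_n,y_n)/d(q,z_n)$ for all sequences (including those representing the point at infinity, which are preserved by this identification).

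To build $z_n$, I would set
$$\ell_n := \inf\{ d(q,w) : w \in X_{*_X},\ d(q,w) \geq \lambda_n \},$$
which is finite for every $n$ by the unboundedness of $X_{*_X}$ (Remark \ref{e6e25e}). Pick any $y_n \in X_{*_X}$ with $d(q,y_n) \in [\ell_n, \ell_n + 1]$. Since $X$ is asymptotically chained with some gauge $v_q$, there is a chain $q = w_0, w_1, \dots, w_k = y_n$ whose consecutive steps have size at most $v_q(d(q,y_n))$. The triangle inequality gives $|d(q,w_{i+1}) - d(q,w_i)| \leq v_q(d(q,y_n))$, so the sequence of distances $d(q,w_i)$ starts at $0$, ends at $d(q,y_n) \geq \lambda_n$, and has jumps bounded by $v_q(d(q,y_n))$. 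Taking $z_n := w_i$ for the first $i$ with $d(q,w_i) \geq \lambda_n$ yields
$$d(q,z_n) \in [\lambda_n,\ \lambda_n + v_q(d(q,y_n))].$$

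The key step is to show that $\ell_n$ is comparable to $\lambda_n$ (so that $v_q(d(q,y_n))$ is small relative to $\lambda_n$). Since $z_n \in X_{*_X}$ satisfies $d(q,z_n) \geq \lambda_n$, the definition of $\ell_n$ forces $d(q,z_n) \geq \ell_n$, whence $\ell_n \leq \lambda_n + v_q(\ell_n + 1)$. Sublinearity of $v_q$ gives $v_q(\ell_n + 1) \leq \ell_n/2$ once $\ell_n$ is sufficiently large, and this rearranges to $\ell_n \leq 2\lambda_n$. Because $\lim_\omega \lambda_n = \infty$ forces $\ell_n \to_\omega \infty$, the bound $\ell_n \leq 2\lambda_n$ holds $\omega$-almost surely. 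Combining,
$$1 \leq \frac{d(q,z_n)}{\lambda_n} \leq 1 + \frac{v_q(2\lambda_n + 1)}{\lambda_n},$$
and the right-hand side tends to $1$ along $\omega$ by sublinearity, giving $\lim_\omega d(q,z_n)/\lambda_n = 1$.

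The only delicate point is the seemingly circular bound $\ell_n \leq \lambda_n + v_q(\ell_n + 1)$: it is useful only once $\ell_n$ is large enough for sublinearity to take effect. The resolution is that $\lim_\omega \lambda_n = \infty$ already drags $\ell_n \geq \lambda_n$ to infinity along $\omega$, so no fixed-point argument is needed and the conclusion is a clean $\omega$-almost-sure statement. Modifying $z_n$ arbitrarily on the $\omega$-null set where the construction fails has no effect on $\omega$-limits.
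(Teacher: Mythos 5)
Your proof is correct and follows essentially the same route as the paper's: both reduce the lemma to producing a sequence $z_n$ with $\lim_\omega d(q,z_n)/\lambda_n = 1$, and both obtain such a $z_n$ from an asymptotic chain reaching a near-minimal point outside the ball $B(q,\lambda_n)$, exploiting sublinearity of the gauge. The only difference is in the bookkeeping: the paper takes $z_n$ to be an $\varepsilon$-minimal point outside the ball and inspects the last chain point inside it, whereas you take the first chain point outside the ball and add the self-consistent bound $\ell_n \leq 2\lambda_n$ (a slightly cleaner variant); note that both your argument and the paper's use the exact triangle inequality, i.e.\ implicitly treat $d$ as a genuine metric rather than a quasimetric.
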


\begin{proof}
Notice that the Lemma follows, once we can show that for any given
ultrafilter \(\omega\) and any sequence \(\lambda_{n}\) with
\(\lim_{ _\omega} \lambda_{n} = \infty\), there exists a sequence
\(z_{n} \in X\), s.t.
\[\lim_{ \omega } \frac{\lambda_{n}}{|z_{n}|} = 1.\]

We may assume that \(\lambda_{n} > 0\), and let
\(z_n \in X_{*_{X}} \backslash B_{\lambda_n}(o)\) be
\(\varepsilon\)-minimal, in the sense that for any other
\(x \in X_{*_{X}} \backslash B_{\lambda_n}(o)\) with
\(d(o, x) \leqslant d\left(o, z_n\right)\),
\(\left|d(o, x)-d\left(o, z_n\right)\right|<\varepsilon\). Such a
\(z_{n}\) always exists, since \(*_{X}\) is assumed to be not isolated.

Since \(X\) is asymptotically chained, there exists a sequence
\[o = x_{0}, x_1, \ldots, x_{k-1}, x_k = z_{n} \quad \text{s.t.} \quad d\left(x_i, x_{i+1}\right) \leq v\left(\left|z_n\right|\right).\]
Let \(x_i\) be the maximal point inside \(B_{\lambda_{n}}(o)\), in the
sense that for all \(j>i, x_j \not\in B_{\lambda_{n}}(o)\).

We may assume that \(x_{i}=x_{k-1}\). Indeed, if this is not the case,
then either \(d\left(o, x_{i+1}\right) \leq d\left(o, z_n\right)\) and
hence
\(\left|d\left(o, x_{i+1}\right)-d\left(o, z_n\right)\right|<\varepsilon\),
so
\[d\left(x_j, x_{j+1}\right) \leqslant v\left(\left|z_{n}\right|\right) \leqslant v\left(\left|x_{i+1}\right|+\varepsilon\right),\]
or \(d\left(o, x_{i+1}\right)>d(o,z_{n})\) and hence
\[d\left(x_j, x_{j+1}\right) \leqslant v\left(|z_{n}|\right) \leqslant v\left(\left|x_{i+1}\right|\right).\]
Except for an additive \(\varepsilon\)-error that is uniform over \(n\),
we may replace \(z_{n}\) by \(x_{i + 1}\).

We now relate \(\lambda_{n}\) with \(|z_{n}|\) via the triangle
inequality,
\[\left|z_n\right| \leq \left|x_{k-1}\right|+v\left(\left|z_n\right|\right) \leq \lambda_n + v\left(\left|z_n\right|\right).\]
In particular \[
1-\frac{v\left(\left|z_n\right|\right)}{|z_{n}|} \leq \frac{\lambda_n}{|z_{n}|} \leq 1.
\]

\end{proof}

\bibliographystyle{alpha}
\bibliography{Asymptotic-Moebius_maps}
\end{document}